\documentclass[11pt,a4paper]{article}

\usepackage{amsthm}
\usepackage{amsmath}
\usepackage{amsfonts, amssymb, enumerate}
\usepackage{psfrag,graphicx}
\pagestyle{plain}
\usepackage[hidelinks]{hyperref}
\hypersetup{breaklinks=true}
\usepackage{array}
\usepackage{color,tikz}

\delimitershortfall=-0.1pt
\newtheorem{proposition}{Proposition}[section]
\newtheorem{theorem}[proposition]{Theorem}

\newtheorem{lemma}[proposition]{Lemma}
\newtheorem{corollary}[proposition]{Corollary}
\newtheorem{definition}[proposition]{Definition}
\theoremstyle{remark}
\newtheorem{example}[proposition]{Example}

\newcommand{\R}{{\mathbb{R}}}
\newcommand{\BV}{{\mathbf{BV}}}
\renewcommand{\L}[1]{\mathbf{L^{#1}}}
\newcommand{\C}[1]{\mathbf{C^{#1}}}
\newcommand{\Cc}[1]{\mathbf{C_c^{#1}}}
\newcommand{\Lloc}[1]{\mathbf{L_{loc}^{#1}}}
\newcommand{\rsp}{\mathcal{RS}_{\rm p}}
\newcommand{\rsv}{\mathcal{RS}_{\rm v}}
\newcommand{\rs}{\mathcal{RS}}
\renewcommand{\d}{{\rm{d}}}
\usepackage{subcaption}
\usepackage{mathtools}

\setlength{\textfloatsep}{5pt}
\usepackage[margin=.1\textwidth,skip=15pt]{caption}

\usepackage{calc}
\setlength\textwidth{175mm}
\setlength\textheight{255mm}
\setlength\oddsidemargin{(\paperwidth-\textwidth)/2 - 1in}
\setlength\topmargin{(\paperheight-\textheight - \headheight - \headsep - \footskip)/2 - 1in}


\title{Coupling conditions for isothermal gas flow and applications to valves}

\author{A.\ Corli, M.\ Figiel, A.\ Futa, M.\ D.\ Rosini}

\usepackage[utf8]{inputenc}
\usepackage{enumitem}

\numberwithin{equation}{section}

\let\originalleft\left
\let\originalright\right
\renewcommand{\left}{\mathopen{}\mathclose\bgroup\originalleft}
\renewcommand{\right}{\aftergroup\egroup\originalright}

\begin{document}

\allowdisplaybreaks
\maketitle
\begin{abstract}
We consider an isothermal gas flowing through a straight pipe and study the effects of a two-way electronic valve on the flow. The valve is either open or closed according to the pressure gradient and is assumed to act without any time or reaction delay. We first give a notion of coupling solution for the corresponding Riemann problem; then, we highlight and investigate several important properties for the solver, such as coherence, consistence, continuity on initial data and invariant domains. In particular, the notion of coherence introduced here is new and related to commuting behaviors of valves. We provide explicit conditions on the initial data in order that each of these properties is satisfied. The modeling we propose can be easily extended to a very wide class of valves.
\end{abstract}
Keywords: systems of conservation laws, gas flow,  valve, Riemann problem, coupling conditions.
\\
2010 AMS subject classification: 35L65, 35L67, 76B75.

\section{Introduction}

In this paper we consider a model of gas flow through a pipe in presence of a pressure-regulator valve. 
We deal with a \emph{plug flow}, which means that the velocity of the gas is constant on any cross-section of the pipe; all friction effects along the walls of the pipe are dropped.
To model the flow away from the valve, we use the following equations for conservation of mass and momentum, as done for analogous problems in \cite{Banda-Herty-Klar2, Banda-Herty-Klar1, Colombo-Garavello_notions, Herty-compressors}:
\begin{equation}\label{eq:system}
\begin{cases}
	\rho_t + (\rho \, v)_x=0,
	\\
	(\rho \, v)_t + \left(\rho \, v^2 + p(\rho)\right)_x = 0.
\end{cases}
\end{equation}
Here $t>0$ is the time and $x\in\R$ is the space position along the pipe. 
The state variables are $\rho$, the \emph{mass density} of the gas and $v$, the \emph{velocity}; we denote by $q \doteq \rho \, v$ the \emph{linear momentum}. 
Since variations of temperature are not significant in most real situations of gas flows in pipes, we focus on the \emph{isothermal} case
\begin{equation}\label{eq:p}
	p(\rho) \doteq a^2\rho,
\end{equation}
for a constant $a>0$ that gives  the \emph{sound speed}. 
We emphasize that the flow can occur in either directions along the pipe; it can be either subsonic or supersonic. Usually, an hydraulic system is completed by compressors \cite{Banda-Herty, Herty-Gugat, Gugat-Herty-Schleper, Herty-compressors, Herty-Sachers} and valves \cite{Martin-Moller-Moritz, Moller}. 
In this paper we focus on the case of a valve. 

Indeed, there are several different kinds of valves, but their common feature consists in regulating the flow. Opening and closing can be partial and may depend either on the flow, or on the pressure, or even on a combination of both. 
Moreover, a valve may let the gas flow in one direction only or in either. 
The simplest and most natural problem for system \eqref{eq:system} in presence of a valve is clearly the Riemann problem, where the valve induces a substantial modification in the solutions with respect to the free-flow case. 
However, proposing a Riemann solver that includes the mechanical action of a valve is only the first step toward a good description of the flow for positive times: some natural properties, both from the physical and mathematical point of view, have to be investigated. 
Such properties are coherence, consistence and continuity with respect to the initial data; at the end, if possible, invariant domains should be properly established. 
This is the main issue of this paper.

In Section~\ref{s:gasflow} we rigorously define the notions mentioned above; they are stated in the case of system \eqref{eq:system} but can be readily extended to any \lq\lq nonstandard\rq\rq\ coupling Riemann solver. A very short account on the Lax curves of \eqref{eq:system} is then given as well as the definition of the standard Riemann solver for this system. This material is very well known \cite{LeVeque-book}, but it is so heavily exploited in the following that any comprehension would be hindered without these details. 

Section~\ref{sec:RSv} introduces a \lq\lq Riemann solver\rq\rq\ when an interface condition, such as that given by a valve, is present. Some general results are then given and few simple models of valves (see \cite[\S 2]{Koch}, \cite[(6)]{Martin-Moller-Moritz} or \cite[\S~4.3.2, \S~4.3.3, (1)-(4) page~51]{Moller}) are provided. In this modeling, we do not take into consideration the flow inside the valve but simply its effects. 
The framework is that of conservation laws with point constraints, which has so far been developed only for vehicular and pedestrian flows, see \cite{edda-nikodem-mohamed, rosini-book} and the references therein.

Section~\ref{sec:bev} contains our main results, which are collected in Theorem~\ref{teo}. They concern the coherence, consistence, continuity with respect to the initial data and invariant domains in a very special case, namely that of a pressure-relief valve. They can be understood as a first step in the direction of proving a general existence theorem for initial data with bounded variation. Some technical proofs are collected in Section~\ref{s:techproofs}. 
The final Section~\ref{sec:fin} resumes our conclusions.

\section{The gas flow through a pipe}\label{s:gasflow}

In this introductory section we provide some information about system \eqref{eq:system}, in particular as far as the geometry of the Lax curves is concerned.

\subsection{The system and basic definitions}

Under \eqref{eq:p}, system \eqref{eq:system} can be written in the conservative $(\rho,q)$-coordinates as
\begin{equation}\label{eq:systemq}
\begin{cases}
    \rho_t + q_x=0,
    \\
    q_t + \left(\frac{q^2}{\rho} + a^2 \rho\right)_x = 0.
\end{cases}
\end{equation}
We usually refer to the expression \eqref{eq:systemq} of the equations and denote $u\doteq(\rho,q)$.
We assume that the gas fills the whole pipe and then $u$ takes values in $\Omega \doteq \{(\rho,q) \in \R^2 \colon \rho>0\}$.
A state $(\rho,q)$ is called \emph{subsonic} if $|q/\rho| < a$ and \emph{supersonic} if $|q/\rho| > a$; the half lines $q=\pm a\,\rho$, $\rho>0$, are \emph{sonic lines}.

The Riemann problem for \eqref{eq:systemq} is the Cauchy problem with initial condition
\begin{equation}\label{eq:Riemann}
u(0,x) = 
\begin{cases}
u_\ell &\hbox{ if } x<0,
\\
u_r &\hbox{ if } x>0,
\end{cases}
\end{equation}
$u_\ell,u_r\in\Omega$ being given constants.
\begin{definition}\label{def:ws}
We say that $u \in \C0((0,\infty);\L\infty(\R;\Omega))$ is a \emph{weak solution} of \eqref{eq:systemq},\eqref{eq:Riemann} in $[0,\infty)\times\R$ if
\begin{align*}
\int_0^\infty\int_{\R} \left[ \rho \, \varphi_t +  q \, \varphi_x \right] \d x \, \d t
+ \rho_\ell \int_{-\infty}^0 \varphi(0,x) \, \d x + \rho_r \int_0^\infty \varphi(0,x) \, \d x &= 0,
\\
\int_0^\infty\int_{\R} \left[ q \, \psi_t +  \left(\frac{q^2}{\rho^2}+a^2\right) \rho \, \psi_x \right] \d x \, \d t
+ q_\ell \int_{-\infty}^0 \psi(0,x) \, \d x + q_r \int_0^\infty \psi(0,x) \, \d x &= 0,
\end{align*}
for any test function $\varphi, \psi \in \Cc\infty([0,\infty)\times\R;\R)$.
\end{definition}

We denote by $\BV(\R;\Omega)$ the space of  $\Omega$-valued functions with bounded variation.
We can assume that any function in $\BV(\R;\Omega)$ is right continuous by possibly changing the values at countably many points.

\begin{definition}\label{def:RS}
Let $\mathsf{D} \subseteq \Omega^2$ and a map $\rs : \mathsf{D} \to \BV(\R;\Omega)$.
\begin{itemize}[leftmargin=*]\setlength{\itemsep}{0cm}\setlength\itemsep{0em}%
\item
We say that $\rs$ is a Riemann solver for \eqref{eq:systemq} if for any $(u_\ell,u_r) \in \mathsf{D}$ the map $(t,x) \mapsto \rs [u_\ell,u_r](x/t)$ is a weak solution to \eqref{eq:systemq},\eqref{eq:Riemann} in $[0,\infty)\times\R$.
\item
A Riemann solver $\rs$ is \emph{coherent} at $(u_\ell,u_r) \in \mathsf{D}$ if $u\doteq\rs [u_\ell,u_r]$ satisfies for any $\xi_o \in \R$:
\begin{align}\tag{ch.0}\label{ch0}
&\left(u(\xi_o^-),u(\xi_o^+)\right) \in \mathsf{D};
\\\tag{ch.1}\label{ch1}
&\rs \left[u(\xi_o^-),u(\xi_o^+)\right](\xi) = 
\begin{cases}
u(\xi_o^-)&\text{if } \xi<\xi_o,
\\
u(\xi_o^+)&\text{if } \xi\ge\xi_o.
\end{cases}
\end{align}
The \emph{coherence domain} $\mathsf{CH}\subseteq\mathsf{D}$ of $\rs$ is the set of all pairs $(u_\ell,u_r)\in\mathsf{D}$ where $\rs$ is coherent.
\item
A Riemann solver $\rs$ is \emph{consistent} at $(u_\ell, u_r) \in \mathsf{D}$ if $u\doteq\rs [u_\ell,u_r]$ satisfies for any $\xi_o \in \R$:
\begin{align}\label{P0}\tag{cn.0}
&\left(u_\ell,u(\xi_o)\right), \left(u(\xi_o),u_r\right) \in \mathsf{D};
\\\label{P1}\tag{cn.1}
&\begin{cases}
\rs \left[u_\ell,u(\xi_o)\right](\xi) =
\begin{cases}
u(\xi)& \hbox{if } \xi < \xi_o ,
\\
u(\xi_o) & \hbox{if } \xi \ge \xi_o ,
\end{cases}
\\[12pt]
\rs \left[u(\xi_o),u_r\right](\xi) =
\begin{cases}
u(\xi_o) & \hbox{if } \xi < \xi_o ,
\\
u(\xi)& \hbox{if } \xi \geq \xi_o;
\end{cases}
\end{cases}
\\\label{P2}\tag{cn.2}
&u(\xi)=
\begin{cases}
\rs \left[u_\ell,u(\xi_o)\right](\xi)& \hbox{if } \xi < \xi_o ,
\\[5pt]
\rs 
\left[u(\xi_o),u_r\right](\xi) & \hbox{if } \xi \geq \xi_o .
\end{cases}
\end{align}
The \emph{consistence domain} $\mathsf{CN}\subseteq\mathsf{D}$ of $\rs$ is the set of all pairs $(u_\ell,u_r)\in\mathsf{D}$ where $\rs$ is consistent.
\item
A Riemann solver $\rs$ is $\Lloc1$-continuous at $(u_\ell, u_r) \in \mathsf{D}$ if for any $\xi_1,\xi_2\in\R$ we have
\[\lim_{\genfrac{}{}{0pt}{}{(u_\ell^\varepsilon, u_r^\varepsilon)\to(u_\ell, u_r)}{(u_\ell^\varepsilon, u_r^\varepsilon) \in \mathsf{D}}} \int_{\xi_1}^{\xi_2} \left\|\rs [u_\ell^\varepsilon, u_r^\varepsilon](\xi) - \rs [u_\ell, u_r](\xi)\right\| \d\xi=0.\]
The \emph{$\Lloc1$-continuity domain} $\mathsf{L}\subseteq\mathsf{D}$ of $\rs$ is the set of all $(u_\ell,u_r)\in\mathsf{D}$ where $\rs$ is $\Lloc1$-continuous.
\item
A Riemann solver $\rs$ admits $\mathcal{I} \subseteq \Omega$ as \emph{invariant domain} if $\mathcal{I}^2 \subseteq \mathsf{D}$ and $\rs [\mathcal{I},\mathcal{I}](\R) \subseteq \mathcal{I}$.
\end{itemize}
\end{definition}
Some comments on these definitions are in order. 
Roughly speaking, for any coherent initial datum, the ordered pair of the traces of the solution belongs to $\mathsf{D}$ by \eqref{ch0} and it is a fixed point of $\rs$ by \eqref{ch1}.
The coherence of a Riemann solver $\rs$ is a minimal requirement to develop a numerical scheme with a time discretization based on $\rs$; otherwise, it may happen that the numerical solution of a Riemann problem greatly differs from the analytic one.
An analogous condition has been introduced in \cite{garavellopiccoli-book} at the junctions of a network.
While coherence is easily seen to be satisfied in the case of a Lax Riemann solver, see Proposition~\ref{prop:rsp}, it plays a fundamental role in presence of a valve, as we comment later on.
Coherence is, in a sense, a {\em local} condition (w.r.t.\ $\xi$).
On the contrary, the consistence of a Riemann solver is rather a {\em global} property: ``cutting'' or ``pasting'' Riemann solutions (see \eqref{P1} and \eqref{P2}, respectively), does not change the structure of the partial or total Riemann solutions.
We recall that the consistence of a Riemann solver is a necessary condition for the well-posedness in $\bf{L^1}$ of the Cauchy problem for \eqref{eq:systemq}.
Differently from the classical theory for invariant domains \cite[Corollary 3.7]{Hoff1985}, here an invariant domain does not necessarily have a smooth boundary and may be disconnected or not closed.

\begin{proposition}\label{pro:FernandoGaviria}
If a Riemann solver $\rs$ is either coherent or consistent at $(u_0,u_0) \in \mathsf{D}$, then $\rs [u_0,u_0]\equiv u_0$.
\end{proposition}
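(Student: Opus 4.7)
The plan is to show that $u := \rs[u_0, u_0]$ is identically $u_0$. The strategy is to locate some $\xi_o \in \R$ at which the trace(s) of $u$ already coincide with $u_0$, and then substitute that $\xi_o$ into the corresponding axiom---\eqref{ch1} in the coherent case, the first identity of \eqref{P1} in the consistent case---to propagate the equality $u = u_0$ to the whole real line.

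The preliminary step is to observe that $u(\xi) = u_0$ for every sufficiently negative $\xi$. Matching the initial terms in Definition~\ref{def:ws} for $(t,x) \mapsto u(x/t)$ with $u_\ell = u_r = u_0$ forces $u(\pm\infty) = u_0$. Moreover, self-similarity reduces the system \eqref{eq:systemq} on smooth parts to a distributional ODE that, at regular points, forces $\xi$ to be one of the characteristic speeds $q/\rho \pm a$ whenever $u'(\xi) \neq 0$; by the isothermal form \eqref{eq:p} and the boundedness of $u$ in $\Omega$, these speeds lie in some compact interval $[-C,C]$, and every Rankine--Hugoniot jump of $u$ has speed in the same interval. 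Hence $u(\xi) = u_0$ for every $\xi < -C$, and any such $\xi_o$ satisfies $u(\xi_o^-) = u(\xi_o^+) = u(\xi_o) = u_0$.

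With this in hand, the coherent case is immediate: \eqref{ch1} at such a $\xi_o$ reads $\rs[u_0,u_0](\xi) = u_0$ for every $\xi \in \R$, i.e.\ $u \equiv u_0$. For the consistent case, the first identity of \eqref{P1} at the same $\xi_o$ becomes $\rs[u_0,u_0](\xi) = u(\xi)$ for $\xi < \xi_o$ and $\rs[u_0,u_0](\xi) = u(\xi_o) = u_0$ for $\xi \ge \xi_o$. Since $\rs[u_0,u_0] = u$, the second piece yields $u \equiv u_0$ on $[\xi_o,\infty)$, and the preliminary step covers $(-\infty,\xi_o)$, completing the proof.

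The main obstacle is really the preliminary step itself: Definition~\ref{def:RS} is abstract and does not a priori exclude BV weak solutions whose waves travel at arbitrarily large speeds, so finite propagation has to be extracted from the self-similar reduction of \eqref{eq:systemq} together with the Rankine--Hugoniot condition and the boundedness of the flux Jacobian on the image of $u$. Once that is in place, the rest of the argument is a one-line substitution in each axiom.
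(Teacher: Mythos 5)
Your argument is correct and follows essentially the same route as the paper: both proofs use finite speed of propagation to find a $\xi_o$ with $u(\xi_o^\pm)=u_0$ and then substitute into \eqref{ch1} or the first identity of \eqref{P1}. The only difference is that you spell out a justification of the finite-speed step (via the self-similar reduction and Rankine--Hugoniot), which the paper simply asserts.
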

\begin{proof}
Fix $(u_0,u_0) \in \mathsf{D}$ and let $u\doteq \rs [u_0,u_0]$.
By the finite speed of propagation, there exists $\xi_o\in\R$ such that $u\equiv u_0$ in $(-\infty,\xi_o]$, whence $u(\xi_o^\pm)= u_0$.
If $\rs$ is either coherent or consistent at $(u_0,u_0)$, then we have $\rs [u_0,u_0]\equiv u_0$ by \eqref{ch1} or by the first condition in \eqref{P1}, respectively.
\end{proof}

\subsection{The Lax curves}

The eigenvalues of \eqref{eq:systemq} are $\lambda_1(u) \doteq \frac{q}{\rho} - a$, and $\lambda_2(u) \doteq \frac{q}{\rho} + a$.
System \eqref{eq:systemq} is strictly hyperbolic in $\Omega$ and both characteristic fields are genuinely nonlinear. 
Hence, weak solutions can contain both rarefaction and shock waves (called below \emph{waves}), but not contact discontinuities. 
Any discontinuity curve $x=\gamma(t)$ of a weak solution $u$ of \eqref{eq:systemq} satisfies the Rankine-Hugoniot conditions
\begin{align}\label{eq:RH1}
&\left(\rho_+-\rho_-\right) \dot{\gamma} = q_+-q_-,\\
\label{eq:RH2}
&\left(q_+-q_-\right) \dot{\gamma} = \left(\dfrac{q_+^2}{\rho_+} + a^2 \, \rho_+\right) - \left(\dfrac{q_-^2}{\rho_-} + a^2 \, \rho_-\right),
\end{align}
where $u_\pm(t) \doteq u(t,\gamma(t)^\pm)$ are the traces of $u$, see \cite{Bressan-book, Dafermos-book}.
Riemann invariants of \eqref{eq:systemq} are $w(u) \doteq \frac{q}{a \, \rho} + \log(\rho)$ and $z(u) \doteq \frac{q}{a \, \rho} - \log(\rho)$.
We introduce new coordinates $(\mu,\nu)$ that make simpler the study of the Lax curves:
\begin{align*}
&\begin{cases}
\mu = \log(\rho),\\[5pt]
\nu = q/(a \, \rho),
\end{cases}&
&\Leftrightarrow&
&\begin{cases}
\rho = \exp\left(\mu\right),\\[5pt]
q = a \, \nu \exp\left(\mu\right),
\end{cases}&
&\text{or}&
&\begin{cases}
\mu = (w-z)/2,\\
\nu = (w+z)/2,
\end{cases}&
&\Leftrightarrow&
&\begin{cases}
w = \nu+\mu,\\
z = \nu-\mu.
\end{cases}
\end{align*}
We prefer the $(\mu,\nu)$-coordinates with respect to those induced by the Riemann invariants because we often deal with the locus $q=q_m$, for some $q_m \in \R$; moreover, comparing densities ($\rho_1<\rho_2 \Leftrightarrow \mu_1<\mu_2 \Leftrightarrow w_1-z_1< w_2-z_2$) is easier.
At last, in \cite[Section 3]{AmadoriGuerra-2001}, the wave-front tracking algorithm for \eqref{eq:systemq} relies on the bound of the total variation of the solutions in the $\mu$-coordinate.
We point out that in the $(\mu,\nu)$-coordinates the set $\Omega$ becomes $\R^2$ and the sonic lines are $\nu=\pm1$.
In the sequel it is important to compare the flow corresponding to distinct states; we notice that $q=0$ if and only if $\nu=0$ and $q_1<q_2$ if and only if $\nu_1 \, \exp(\mu_1) < \nu_2 \, \exp(\mu_2)$, see \figurename~\ref{fig:FlowRV}.
\begin{figure}[!htbp]
      \centering
      \begin{psfrags}
      \psfrag{1}[B,c]{$u_1$}
      \psfrag{2}[B,c]{$u_2$}
      \psfrag{3}[B,c]{$u_3$}
      \psfrag{4}[B,c]{$\rho$}
      \psfrag{5}[t,c]{$q$}
      \psfrag{6}[c,c]{$\mathcal{FL}_1^{u_*}$}
        \includegraphics[width=.2\textwidth]{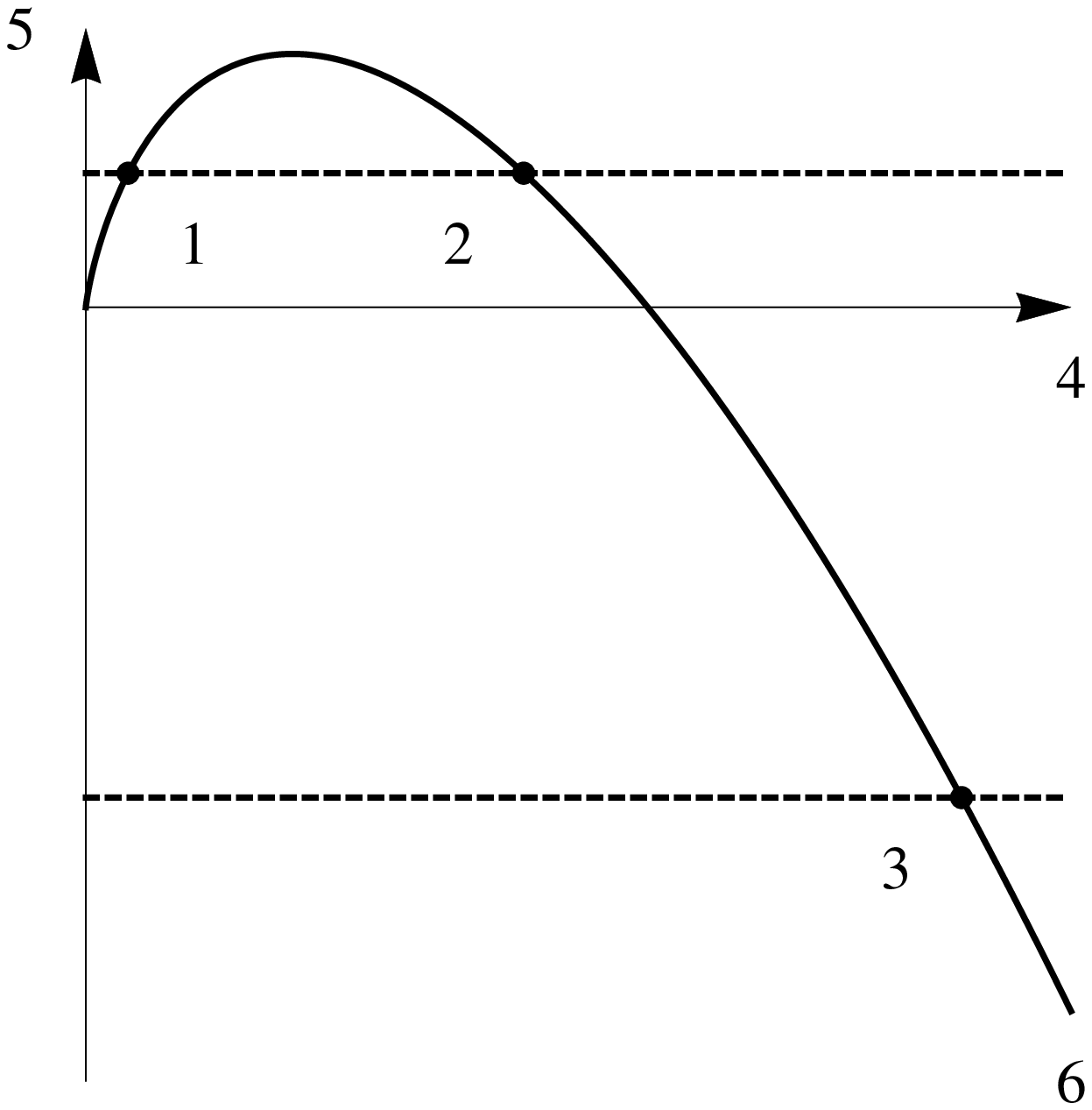}\qquad
      \psfrag{4}[B,c]{$\mu$}
      \psfrag{5}[t,c]{$\nu$}
        \includegraphics[width=.2\textwidth]{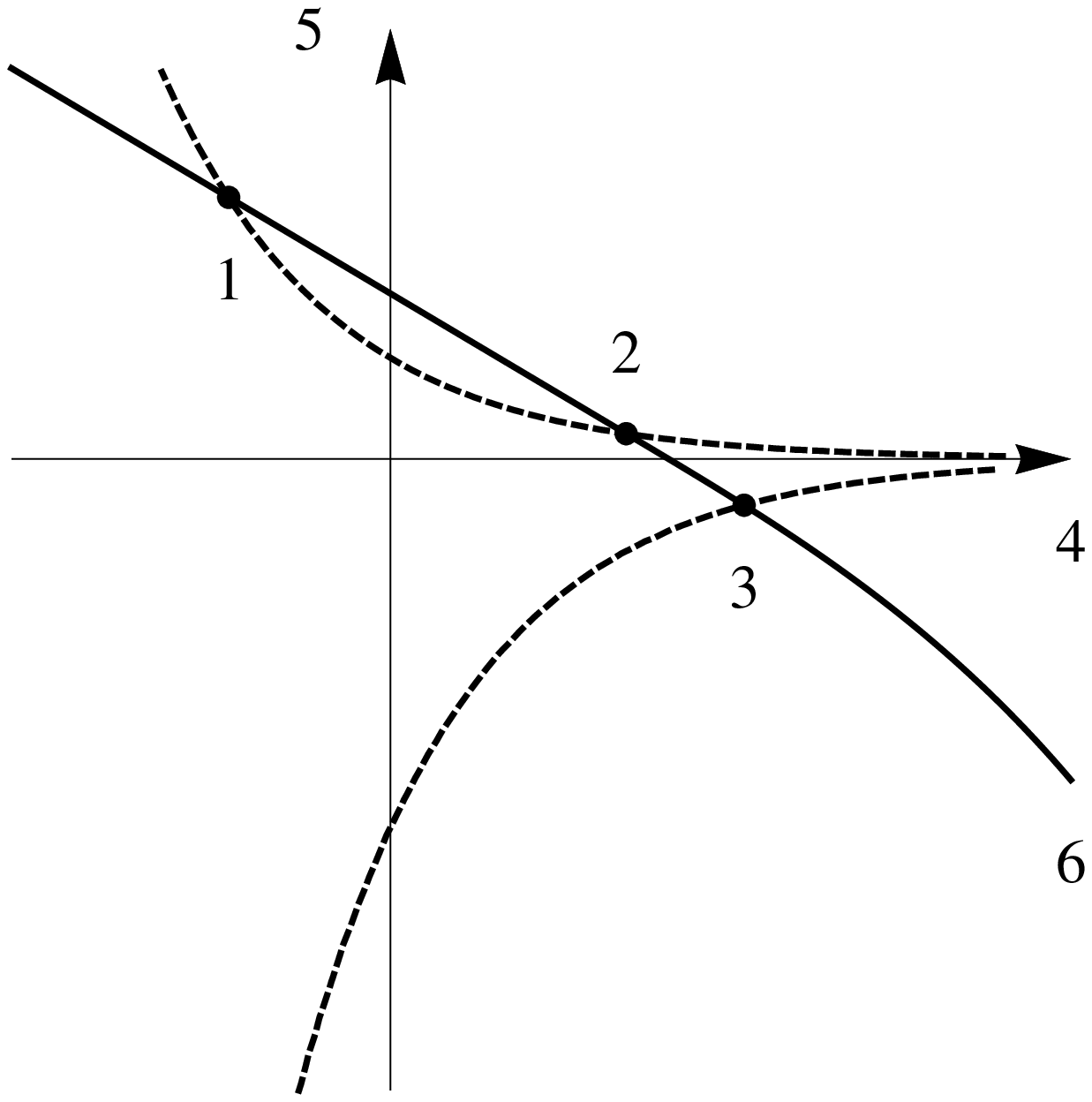}
      \end{psfrags}
      \caption{The curves $q = q_m$ (dashed lines) for two values of $q_m$.}
\label{fig:FlowRV}
\end{figure}

We define $\mathcal{S}_i, \mathcal{R}_i : (0,\infty)\times\Omega \to \R$, $i\in\{1,2\}$, by
\begin{align*}
\mathcal{S}_1(\rho,u_{*}) 
&\doteq \rho \left[ \frac{q_{*}}{\rho_{*}} - a \left(\sqrt{\dfrac{\rho}{\rho_{*}}} - \sqrt{\dfrac{\rho_{*}}{\rho}}\right) \right],&
\mathcal{R}_1(\rho,u_{*}) 
&\doteq \rho \left[ \frac{q_{*}}{\rho_{*}} - a \, \log\left(\dfrac{\rho}{\rho_{*}}\right)\right],
\\
\mathcal{S}_2(\rho,u_{*}) 
&\doteq \rho \left[ \frac{q_{*}}{\rho_{*}} + a \left(\sqrt{\dfrac{\rho}{\rho_{*}}} - \sqrt{\dfrac{\rho_{*}}{\rho}}\right) \right],&
\mathcal{R}_2(\rho,u_{*}) 
&\doteq \rho \left[ \frac{q_{*}}{\rho_{*}} + a \, \log\left(\dfrac{\rho}{\rho_{*}}\right)\right].
\intertext{Then we define $\mathcal{FL}_i, \mathcal{BL}_i : (0,\infty)\times\Omega \to \R$, $i\in\{1,2\}$, by}
	\mathcal{FL}_1(\rho,u_*) &\doteq
	\begin{cases}
	\mathcal{R}_1(\rho,u_*)&\text{if }\rho \in (0,\rho_{*}],
	\\
	\mathcal{S}_1(\rho,u_*)&\text{if }\rho \in (\rho_{*},\infty),
	\end{cases}&
	\mathcal{FL}_2(\rho,u_*) &\doteq
	\begin{cases}
	\mathcal{S}_2(\rho,u_*)&\text{if }\rho \in (0,\rho_{*}),
	\\
	\mathcal{R}_2(\rho,u_*)&\text{if }\rho \in [\rho_{*},\infty),
	\end{cases}
	\\
	\mathcal{BL}_1(\rho,u_*) &\doteq
	\begin{cases}
	\mathcal{S}_1(\rho,u_*)&\text{if }\rho \in (0,\rho_{*}),
	\\
	\mathcal{R}_1(\rho,u_*)&\text{if }\rho \in [\rho_{*},\infty),
	\end{cases}&
	\mathcal{BL}_2(\rho,u_*) &\doteq
	\begin{cases}
	\mathcal{R}_2(\rho,u_*)&\text{if }\rho \in (0,\rho_{*}],
	\\
	\mathcal{S}_2(\rho,u_*)&\text{if }\rho \in (\rho_{*},\infty).
	\end{cases}
\end{align*}
For any fixed $u_{*} \in \Omega$, the \emph{forward} $\mathcal{FL}_i^{u_{*}}$ and \emph{backward} $\mathcal{BL}_i^{u_{*}}$ \emph{Lax curves} of the $i$-th family through $u_{*}$ in the $(\rho,q)$-coordinates are the graphs of the functions $\mathcal{FL}_i(\,\cdot\,,u_*)$ and $\mathcal{BL}_i(\,\cdot\,,u_*)$, respectively, see \figurename~\ref{fig:RVLaxcurves}.
Analogously, the shock $\mathcal{S}_i^{u_{*}}$ and rarefaction $\mathcal{R}_i^{u_{*}}$  curves through $u_*$ in the $(\rho,q)$-coordinates are the graphs of the functions $\mathcal{S}_i(\,\cdot\,,u_*)$ and $\mathcal{R}_i(\,\cdot\,,u_*)$, see \figurename~\ref{fig:RVLaxcurves}.
\begin{figure}[!htbp]
      \centering
      \begin{subfigure}[b]{.23\textwidth}\centering
      \caption*{$\mathcal{FL}_1^{u_*} \cup \mathcal{FL}_2^{u_*}$}
      \begin{psfrags}
      \psfrag{v}[c,B]{$\rho$}
      \psfrag{w}[c,c]{$q$}
      \psfrag{u}[c,c]{$u_{*}$}
      \psfrag{c}[c,c]{$\mathcal{R}_1^{u_{*}}$}
      \psfrag{b}[c,c]{$\mathcal{R}_2^{u_{*}}$}
      \psfrag{a}[c,c]{$\mathcal{S}_1^{u_{*}}$}
      \psfrag{d}[c,c]{$\mathcal{S}_2^{u_{*}}$}
        \includegraphics[width=\textwidth]{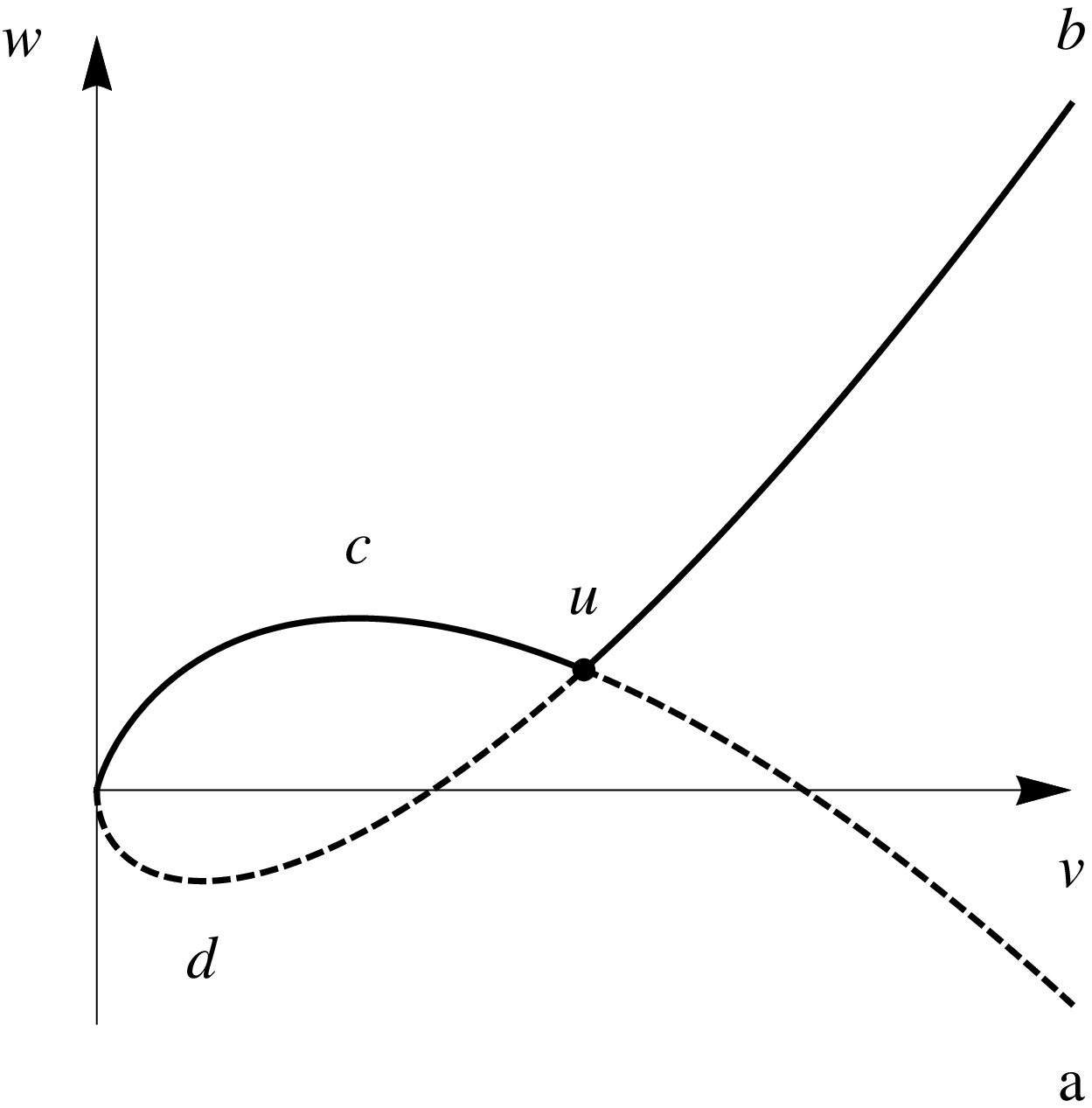}
        \\[20pt]
      \psfrag{v}[c,B]{$\mu$}
      \psfrag{w}[c,c]{$\nu$}
        \includegraphics[width=\textwidth]{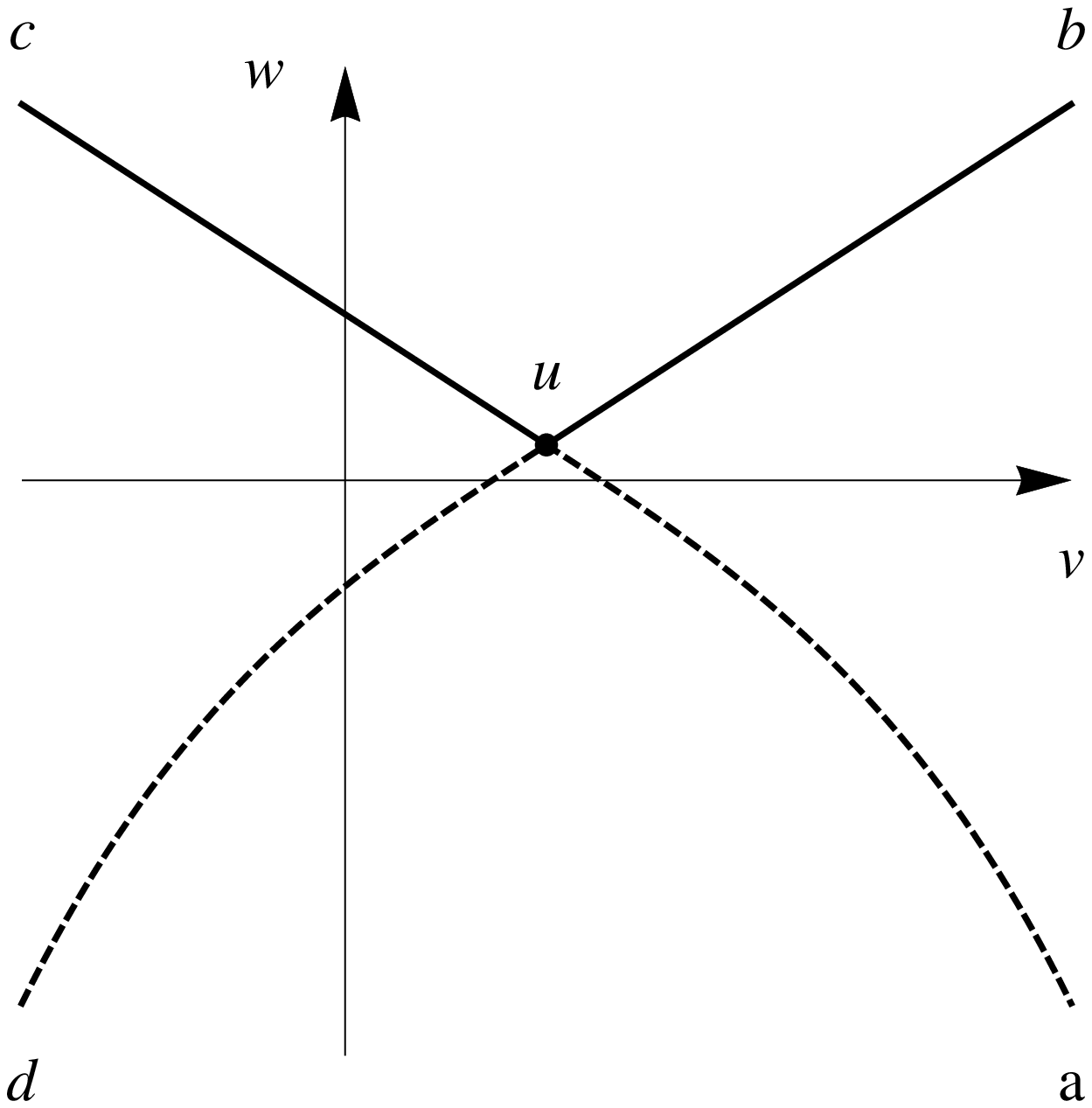}
      \end{psfrags}
      \end{subfigure}\qquad
      \begin{subfigure}[b]{.23\textwidth}\centering
      \caption*{$\mathcal{BL}_1^{u_*} \cup \mathcal{BL}_2^{u_*}$}
      \begin{psfrags}
      \psfrag{v}[c,B]{$\rho$}
      \psfrag{w}[c,c]{$q$}
      \psfrag{u}[c,c]{$u_{*}$}
      \psfrag{c}[c,c]{$\mathcal{R}_1^{u_{*}}$}
      \psfrag{b}[c,c]{$\mathcal{R}_2^{u_{*}}$}
      \psfrag{a}[c,c]{$\mathcal{S}_1^{u_{*}}$}
      \psfrag{d}[c,c]{$\mathcal{S}_2^{u_{*}}$}
        \includegraphics[width=\textwidth]{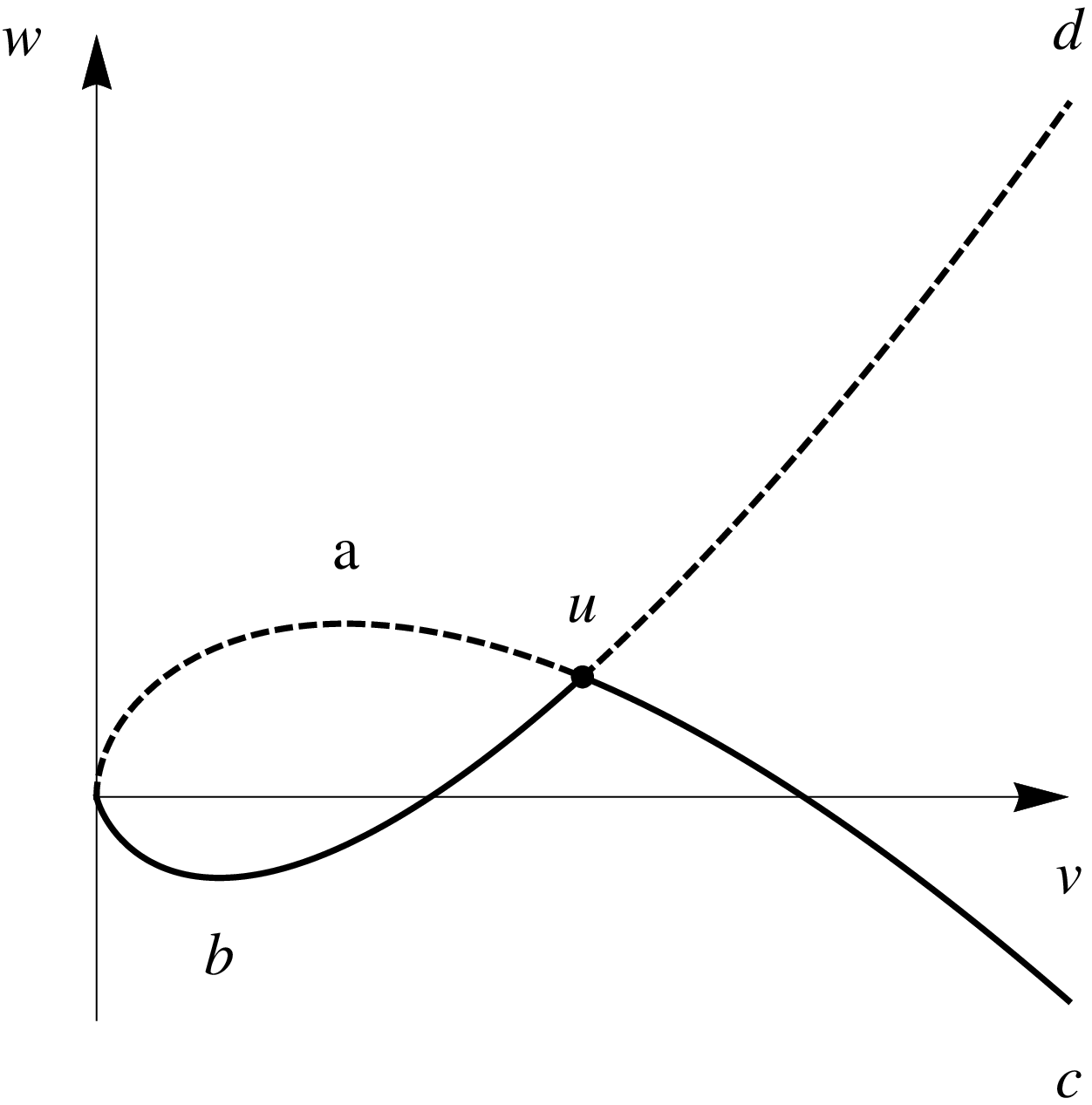}
        \\[20pt]
      \psfrag{v}[c,B]{$\mu$}
      \psfrag{w}[c,c]{$\nu$}
        \includegraphics[width=\textwidth]{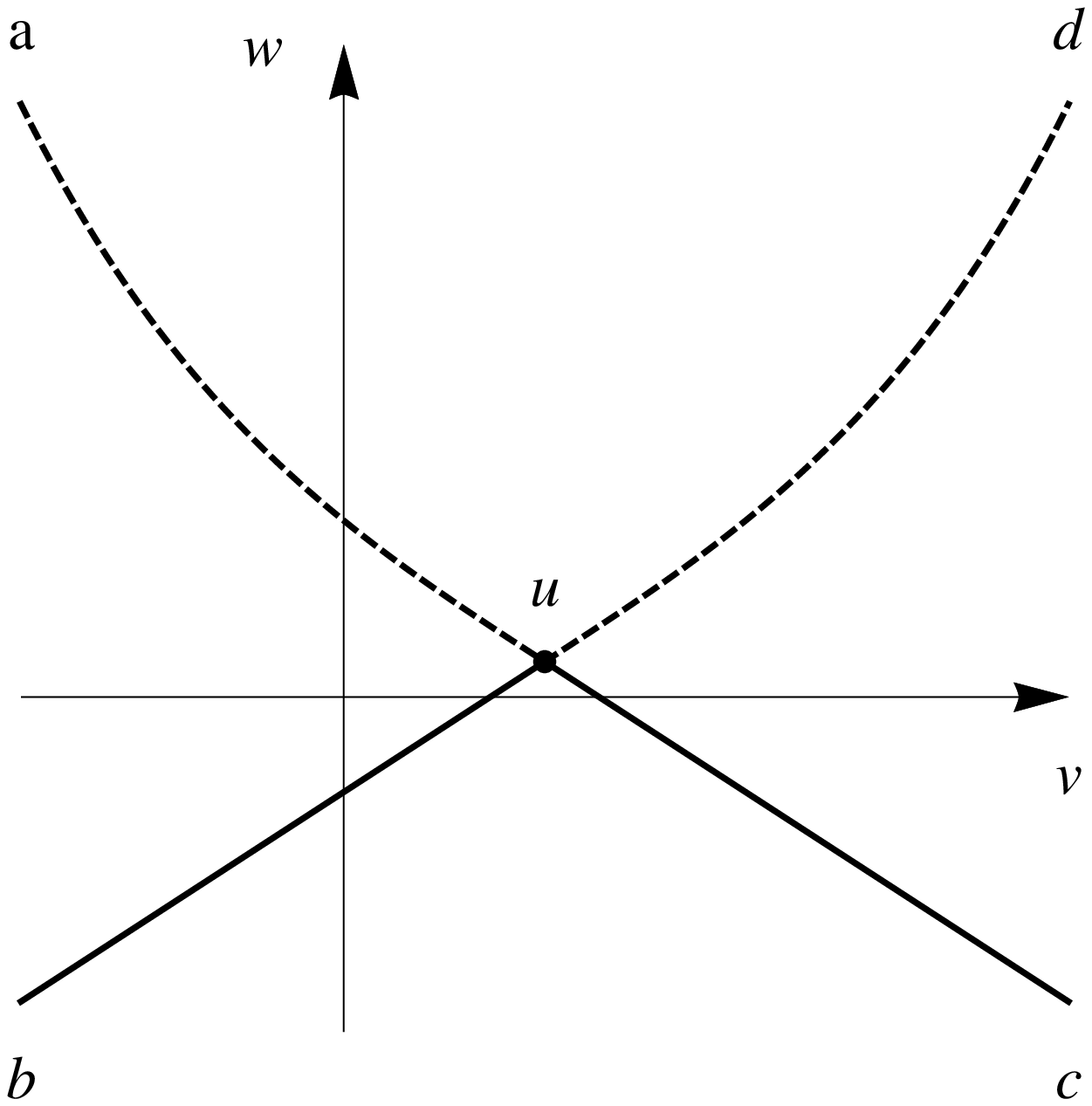}
      \end{psfrags}
      \end{subfigure}
      \caption{Forward Lax curves $\mathcal{FL}_i^{u_{*}}$, first column, and backward Lax curves $\mathcal{BL}_i^{u_{*}}$, second column, in $(\rho,q)$-coordinates, first row, and in $(\mu,\nu)$-coordinates, second row.}
\label{fig:RVLaxcurves}
\end{figure}
In the $(\mu,\nu)$-coordinates the curves $\mathcal{S}_i^{u_{*}}$ and $\mathcal{R}_i^{u_{*}}$ are, with a slight abuse of notations, the graphs of the functions
\begin{align*}
\mathcal{S}_1(\mu,u_{*}) 
&\doteq \nu_*+\Xi(\mu-\mu_*),&
\mathcal{R}_1(\mu,u_{*}) 
&\doteq \nu_*+\mu_*-\mu,&
\\
\mathcal{S}_2(\mu,u_{*}) 
&\doteq \nu_*+\Xi(\mu_*-\mu),&
\mathcal{R}_2(\mu,u_{*}) 
&\doteq \nu_*-\mu_*+\mu,&
\intertext{while $\mathcal{FL}_i^{u_{*}}$ and $\mathcal{BL}_i^{u_{*}}$ are the graphs of the functions}
	\mathcal{FL}_1(\mu,u_*) &\doteq
	\begin{cases}
	\mathcal{R}_1(\mu,u_*)&\text{if }\mu\le\mu_*,
	\\
	\mathcal{S}_1(\mu,u_*)&\text{if }\mu>\mu_*,
	\end{cases}&
	\mathcal{FL}_2(\mu,u_*) &\doteq
	\begin{cases}
	\mathcal{S}_2(\mu,u_*)&\text{if }\mu<\mu_*,
	\\
	\mathcal{R}_2(\mu,u_*)&\text{if }\mu\ge\mu_*,
	\end{cases}
	\\
	\mathcal{BL}_1(\mu,u_*) &\doteq
	\begin{cases}
	\mathcal{S}_1(\mu,u_*)&\text{if }\mu<\mu_*,
	\\
	\mathcal{R}_1(\mu,u_*)&\text{if }\mu\ge\mu_*,
	\end{cases}&
	\mathcal{BL}_2(\mu,u_*) &\doteq
	\begin{cases}
	\mathcal{R}_2(\mu,u_*)&\text{if }\mu\le\mu_*,
	\\
	\mathcal{S}_2(\mu,u_*)&\text{if }\mu>\mu_*.
	\end{cases}
\end{align*}
Above we denoted
\[\Xi(\zeta) \doteq \exp\left(-\zeta/2\right) - \exp\left(\zeta/2\right) = -2 \sinh(\zeta/2),\]
see \figurename~\ref{fig:Xi}.
We observe that
\[\Xi^{-1}(\xi) =2 \ln\left(\dfrac{2}{\sqrt{\xi^2+4}+\xi}\right).\]
Obviously both $\Xi$ and $\Xi^{-1}$ are odd functions; for any $\zeta \in \R \setminus\{0\}$ we have $\Xi'(\zeta)<0$, $\Xi'(0) = -1$, $\zeta \, \Xi''(\zeta)<0$, $\Xi''(0) = 0$, $\Xi'''(\zeta) < 0$.
\begin{figure}[!htbp]
      \centering
      \begin{psfrags}
      \psfrag{z}[c,B]{$\zeta$}
      \psfrag{x}[c,c]{$\Xi$}
        \includegraphics[width=.2\textwidth]{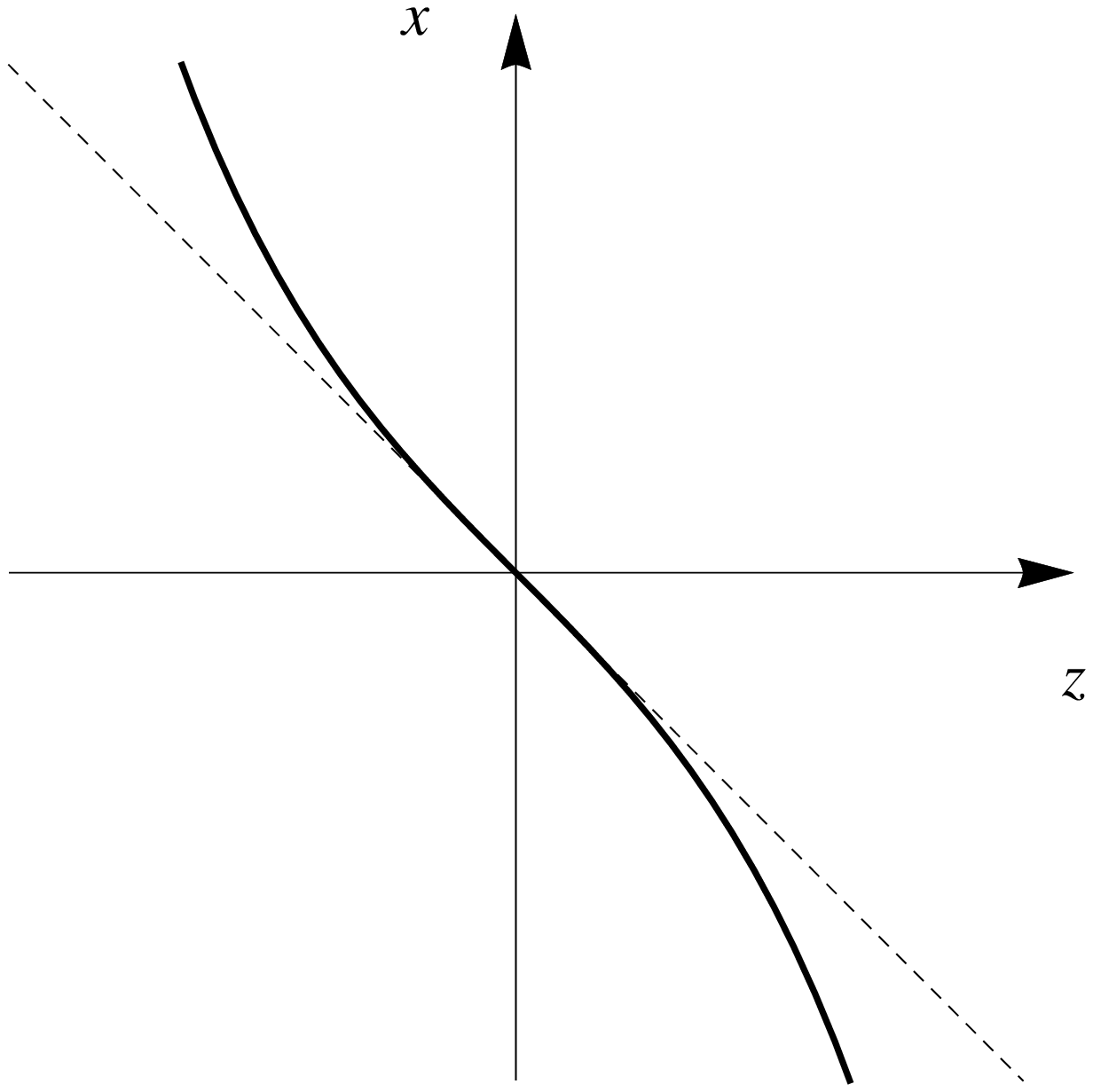}\qquad
      \psfrag{z}[c,B]{$\xi$}
      \psfrag{x}[c,c]{$\Xi^{-1}$}
        \includegraphics[width=.2\textwidth]{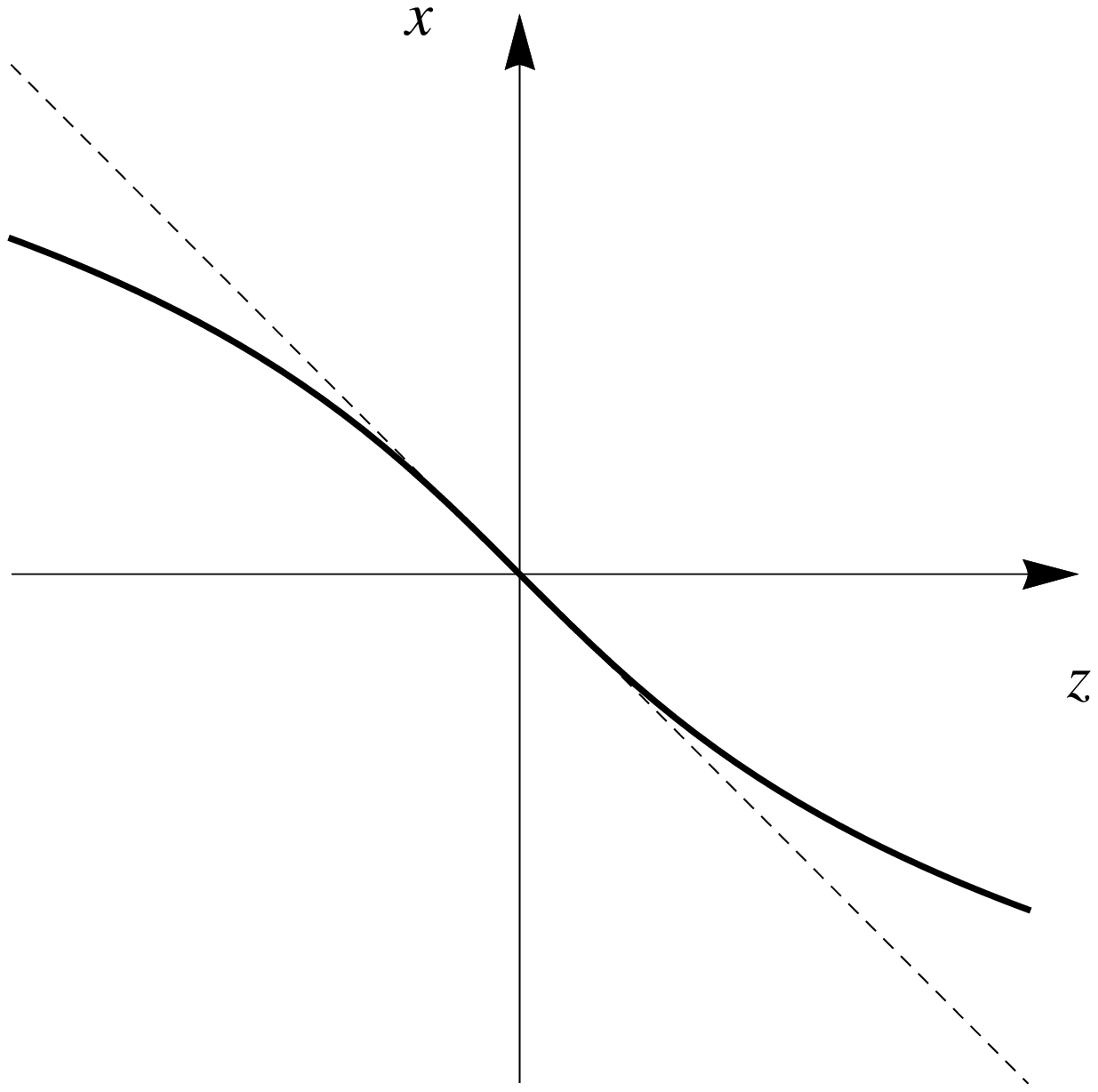}
      \end{psfrags}
      \caption{The functions $\Xi$ and $\Xi^{-1}$. The dashed lines are $\zeta\mapsto-\zeta$ and $\xi\mapsto-\xi$.}
\label{fig:Xi}
\end{figure}

Now we collect the basic properties of the sets $\mathcal{S}_i^{u_{*}}$, $\mathcal{R}_i^{u_{*}}$; the proof is deferred to Subsection~\ref{sec:rsp}.

\begin{proposition}\label{prop:lax}
Let $u_{*}, u_{**} \in \Omega$ be distinct and $i\in\{1,2\}$. Then we have:
\begin{enumerate}[label={{\rm(L\arabic*)}},leftmargin=*,nolistsep]\setlength{\itemsep}{0cm}\setlength\itemsep{0em}%
\item\label{L1}
$\mathcal{R}_i^{u_{*}} \cap \mathcal{R}_i^{u_{**}} \neq \emptyset$ if and only if $\mathcal{R}_i^{u_{*}} = \mathcal{R}_i^{u_{**}}$;
\item\label{L2}
$\mathcal{S}_i^{u_{*}} \cap \mathcal{S}_i^{u_{**}}$ has at most two elements;
\item\label{L3}
if $u_{**} \in \mathcal{S}_i^{u_{*}} \setminus \{ u_{*} \}$, then $\mathcal{S}_i^{u_{**}} \cap \mathcal{S}_i^{u_{*}} = \{u_{**},u_{*}\}$;
\item\label{L4}
$(\mathcal{S}_i)_\rho(0^+,u_*) = (-1)^{i+1}\infty$ and $(\mathcal{R}_i)_\rho(0^+,u_*) = (-1)^{i+1}\infty$;
\item
$\mathcal{R}_1^{u_{*}}$ and $\mathcal{S}_1^{u_{*}}$ are strictly concave, while $\mathcal{R}_2^{u_{*}}$ and $\mathcal{S}_2^{u_{*}}$ are strictly convex;
\item\label{L6}
$(\mathcal{S}_i)_\rho(\rho_{*},u_{*}) = (\mathcal{R}_i)_\rho(\rho_{*},u_{*}) = \lambda_i(u_{*})$ and $(\mathcal{S}_i)_{\rho\rho}(\rho_{*},u_{*}) = (\mathcal{R}_i)_{\rho\rho}(\rho_{*},u_{*}) = (-1)^ia/\rho_{*}$;
\item\label{L7}
$\mathcal{S}_2(\rho,u_*) < \mathcal{R}_2(\rho,u_*) < \mathcal{R}_1(\rho,u_*) < \mathcal{S}_1(\rho,u_*)$ if $\rho < \rho_*$ and $\mathcal{S}_1(\rho,u_*) < \mathcal{R}_1(\rho,u_*) < \mathcal{R}_2(\rho,u_*) < \mathcal{S}_2(\rho,u_*)$ if $\rho > \rho_*$.
\end{enumerate}
\end{proposition}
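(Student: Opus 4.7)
The plan is to reduce most of the statement to elementary real-analysis facts, carried out in whichever coordinate system makes them transparent. For (L1)--(L3) I would work in the $(\mu,\nu)$-coordinates, where the formulas displayed just above the proposition show that $\mathcal{R}_i^{u_{*}}$ is a straight line of slope $\mp 1$ (in fact a level set of a Riemann invariant) and $\mathcal{S}_i^{u_{*}}$ is a translate of the graph of $\Xi$. For (L4)--(L7) I would revert to the $(\rho,q)$-expressions and differentiate directly.

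For (L1), the curves $\mathcal{R}_1^{u_{*}} = \{\mu+\nu = \mu_*+\nu_*\}$ and $\mathcal{R}_2^{u_{*}} = \{\nu-\mu = \nu_*-\mu_*\}$ are level sets of Riemann invariants, so two such sets of the same family are either identical or disjoint. For (L2), set $h(\mu) \doteq \Xi(\mu-\mu_*) - \Xi(\mu-\mu_{**})$; an intersection point of $\mathcal{S}_1^{u_{*}}$ and $\mathcal{S}_1^{u_{**}}$ corresponds to a solution of $h(\mu) = \nu_{**}-\nu_*$. Since $\Xi'''<0$ the function $\Xi''$ is strictly decreasing, so $h''$ has constant sign, $h$ is strictly convex or strictly concave, and hence attains each value at most twice; the argument for $\mathcal{S}_2$ is identical. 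For (L3), a direct substitution in the formula for $\mathcal{S}_i(\rho,u_*)$ verifies that the Rankine--Hugoniot shock relation is symmetric, i.e.\ $u_{**} \in \mathcal{S}_i^{u_{*}}$ implies $u_* \in \mathcal{S}_i^{u_{**}}$; both points therefore lie in $\mathcal{S}_i^{u_{*}} \cap \mathcal{S}_i^{u_{**}}$, and (L2) forbids a third.

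The items (L4)--(L6) follow by two differentiations of the explicit formulas. For example, $(\mathcal{R}_1)_\rho = q_*/\rho_* - a - a\log(\rho/\rho_*)$ blows up to $+\infty$ as $\rho\to 0^+$ and equals $\lambda_1(u_*)$ at $\rho_*$, while $(\mathcal{R}_1)_{\rho\rho} = -a/\rho$ yields both strict concavity and the value $-a/\rho_*$ at $\rho_*$; expanding $\mathcal{S}_1(\rho,u_*) = \rho\,q_*/\rho_* - a\rho^{3/2}/\sqrt{\rho_*} + a\sqrt{\rho_*\rho}$ produces a first derivative containing a term $\sqrt{\rho_*/\rho}$ (supplying the infinite one-sided derivative) and a second derivative that is a sum of two strictly negative terms of orders $\rho^{-1/2}$ and $\rho^{-3/2}$ (supplying strict concavity), which evaluate at $\rho_*$ to $\lambda_1(u_*)$ and $-a/\rho_*$. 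The second-family curves are handled by flipping the relevant signs. For (L7), one computes $\mathcal{R}_1-\mathcal{R}_2 = -2a\rho\log(\rho/\rho_*)$ whose sign depends on $\rho\lessgtr \rho_*$, and then, after the substitution $t = \sqrt{\rho/\rho_*}$, studies $\mathcal{S}_1-\mathcal{R}_1 = a\rho\bigl(2\log t - t + 1/t\bigr)$ via $f(t) \doteq 2\log t - t + 1/t$, which satisfies $f(1)=0$ and $f'(t) = -(t-1)^2/t^2 \le 0$, so $f>0$ on $(0,1)$ and $f<0$ on $(1,\infty)$; the remaining comparison $\mathcal{S}_2-\mathcal{R}_2$ is handled analogously.

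The main obstacle is (L2): the bound of two intersections is invisible from the $(\rho,q)$-formulas, and the clean argument requires the passage to $(\mu,\nu)$-coordinates together with the third-order convexity information on $\Xi$ recorded just before the proposition. Everything else reduces to routine calculus once the formulas are at hand, and (L3) follows from (L2) plus the symmetry of the shock relation.
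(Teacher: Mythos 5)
Your proof is correct and, for most items, runs along the same lines as the paper's: (L1) via the straight lines of equal slope in the $(\mu,\nu)$-plane, (L3) via the symmetry of the shock relation combined with (L2), and (L4)--(L6) by direct differentiation of the explicit $(\rho,q)$-formulas, whose derivatives you compute correctly. The genuine differences are in (L2) and (L7). For (L2) the paper rewrites the equality $\nu_*+\Xi(\mu-\mu_*)=\nu_{**}+\Xi(\mu-\mu_{**})$, using $\Xi(\zeta)=e^{-\zeta/2}-e^{\zeta/2}$, as a second-order (quadratic) equation in $e^{\mu/2}$, which has at most two roots; you instead use $\Xi'''<0$ to make $h(\mu)\doteq\Xi(\mu-\mu_*)-\Xi(\mu-\mu_{**})$ strictly convex or strictly concave, so that a level set of $h$ has at most two points. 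Both arguments are valid; the algebraic one is more self-contained, while yours extends to any situation where the shock curve in these coordinates has a one-signed third derivative. One small point: your convexity argument tacitly assumes $\mu_*\neq\mu_{**}$; if $\mu_*=\mu_{**}$ then $h\equiv0$ is not strictly convex, and you should note separately that the two shock curves are then disjoint vertical translates (no solutions, since $\nu_*\neq\nu_{**}$), a case the quadratic reduction handles uniformly. For (L7) the paper merely appeals to the picture in the $(\mu,\nu)$-coordinates, whereas your computation with $f(t)=2\log t-t+1/t$, $f(1)=0$, $f'(t)=-(t-1)^2/t^2\le0$ supplies an explicit analytic justification of the same ordering, which is if anything more complete than the published argument.
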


For later use we introduce the following notation, see Figure~\ref{fig:notations}:
\begin{itemize}[leftmargin=*]\setlength{\itemsep}{0cm}\setlength\itemsep{0em}%
\item 
$\bar{u}(u_*)$ is the element of $\mathcal{FL}_1^{u_*}$ with the maximum $q$-coordinate;
\item 
$\underline{u}(u_*)$ is the element of $\mathcal{BL}_2^{u_*}$ with the minimum $q$-coordinate;
\item 
$\tilde{u}(u_\ell,u_r)$ is the (unique) element of $\mathcal{FL}_1^{u_\ell} \cap \mathcal{BL}_2^{u_r}$;
\item
$\hat{u}(q_m,u_*)$, for any $q_m \leq \bar{q}(u_*)$, is the intersection of $\mathcal{FL}_1^{u_*}$ and $q=q_m$ with the largest $\rho$-coordinate;
\item
$\check{u}(q_m,u_*)$, for any $q_m \geq \underline{q}(u_*)$, is the intersection of $\mathcal{BL}_2^{u_*}$ and $q=q_m$ with the largest $\rho$-coordinate.
\end{itemize}
We introduce analogously $\tilde{p} \doteq p \circ \tilde{\rho}$ and so on.
Notice that for any $u_\ell,u_r\in\Omega$
\begin{align}\label{eq:acqua}
&\bar{q}(u_\ell)>0&
&\text{and}&
&\underline{q}(u_r)<0;
\end{align}
moreover, for $v_\ell \doteq q_\ell/\rho_\ell$ and $v_r \doteq q_r/\rho_r$,
\begin{align}\label{e:gigina}
&v_\ell < a \quad\Rightarrow\quad \bar{v}(u_\ell) = a&
&\text{and}&
&v_r>-a \quad\Rightarrow\quad \underline{v}(u_r) = a.
\end{align}
In general $\tilde{q}(u_\ell,u_r)$ can be negative even if both $q_\ell$ and $q_r$ are strictly positive.

\begin{figure}[!htbp]
      \centering
      \begin{psfrags}
      \psfrag{r}[r,t]{$\rho$}
      \psfrag{q}[r,t]{$q$}
      \psfrag{a}[r,B]{$u_\ell$}
      \psfrag{b}[r,B]{$u_r$}
      \psfrag{c}[l,t]{$\quad\tilde{u}(u_\ell,u_r)$}
      \psfrag{d}[l,B]{$\hat{u}(0,u_\ell)$}
      \psfrag{e}[l,c]{$\check{u}(0,u_r)$}
      \psfrag{f}[l,t]{$\quad\hat{u}(q_{*},u_\ell)$}
      \psfrag{g}[r,c]{$\check{u}(q_{*},u_r)$}
      \psfrag{h}[r,b]{$q_{*}$}
      \psfrag{j}[c,c]{$~~\underline{u}(u_r)$}
      \psfrag{i}[c,B]{$\bar{u}(u_\ell)$}
      \psfrag{2}[l,b]{$~~\mathcal{FL}_1^{u_\ell}$}
      \psfrag{1}[l,t]{$~~\mathcal{BL}_2^{u_r}$}
        \includegraphics[width=.35\textwidth]{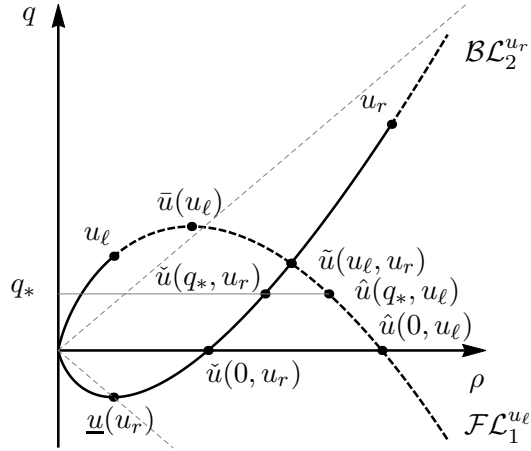}
      \end{psfrags}
      \caption{Notation. The thin dashed lines are the sonic lines.}
\label{fig:notations}
\end{figure}

\subsection{The Riemann solver \texorpdfstring{$\rsp$}{}}

We denote by $\rsp:\Omega^2\to\BV(\R;\Omega)$ the Lax Riemann solver \cite{LeVeque-book}.
We recall that $\xi \mapsto \rsp[u_\ell,u_r](\xi)$ is the juxtaposition of a wave of the first family $\xi \mapsto \rsp[u_\ell,\tilde{u}(u_\ell,u_r)](\xi)$, taking values in $\mathcal{FL}_1^{u_\ell}$, and a wave of the second family $\xi \mapsto \rsp[\tilde{u}(u_\ell,u_r),u_r](\xi)$, taking values in $\mathcal{FL}_2^{\tilde{u}(u_\ell,u_r)}$.
Notice that $\rsp$ is well defined because for any $u_\ell,u_r \in \Omega$ the curves $\mathcal{FL}_1^{u_\ell}$ and $\mathcal{BL}_2^{u_r}$ always meet and precisely at $\tilde{u}(u_\ell,u_r)$.

The right states $u \in \Omega$ that can be connected to a left state $u_\ell$ by a wave of the first (second) family belong to $\mathcal{FL}_1^{u_\ell}$ (resp., $\mathcal{FL}_2^{u_\ell}$), see Figure~\ref{fig:RVLaxcurves}. 
More precisely, the states $u$ that can be connected to $u_\ell$ by a shock wave of the first, resp.\ second, family belong to $\{ u \in \mathcal{S}_1^{u_\ell} \colon \rho>\rho_\ell\}$, resp.\ $\{ u \in \mathcal{S}_2^{u_\ell} \colon \rho<\rho_\ell\}$, and the corresponding speeds of propagation are
\begin{align*}
	&s_{1}(\rho,u_\ell) 
	\doteq v_\ell - a \,  \sqrt{\dfrac{\rho}{\rho_\ell}},&
	&s_{2}(\rho,u_\ell) 
	\doteq v_\ell + a \,  \sqrt{\dfrac{\rho}{\rho_\ell}},
\end{align*}
while the states $u$ that can be connected to $u_\ell$ by a rarefaction wave of the first, resp.\ second, family belong to $\{ u \in \mathcal{R}_1^{u_\ell} \colon \rho\le\rho_\ell\}$, resp.\ $\{ u \in \mathcal{R}_2^{u_\ell} \colon \rho\ge\rho_\ell\}$.

The left states $u$ that can be connected to a right $u_r$ by a wave of the first (second) family belong to $\mathcal{BL}_1^{u_r}$ (resp., $\mathcal{BL}_2^{u_r}$), see Figure~\ref{fig:RVLaxcurves}. 
The states $u$ that can be connected to $u_r$ by a shock wave of the first, resp.\ second, family belong to $\{ u \in \mathcal{S}_1^{u_r} \colon \rho<\rho_r\}$, resp.\ $\{ u \in \mathcal{S}_2^{u_r} \colon \rho>\rho_r\}$, and the corresponding speeds of propagation are respectively $s_{1}(\rho,u_r)$ and $s_{2}(\rho,u_r)$, while the states $u$ that can be connected to $u_r$ by a rarefaction wave of the first, resp.\ second, family belong to $\{ u \in \mathcal{R}_1^{u_r} \colon \rho\ge\rho_r\}$, resp.\ $\{ u \in \mathcal{R}_2^{u_r} \colon \rho\le\rho_r\}$.

In the following, we write ``$i$-shock $(u_-,u_+)$'' in place of ``shock of the $i$-th family from $u_-$ to $u_+$'', and so on.

By the jump conditions \eqref{eq:RH1},\eqref{eq:RH2}, the speed of propagation of a shock between two distinct states $u_{*}$ and $u_{**}$ is the slope in the $(\rho, q)$-plane of the line connecting $u_{*}$ with $u_{**}$, namely $\sigma(u_{*},u_{**}) \doteq (q_{*}-q_{**})/(\rho_{*}-\rho_{**})$; in the $(x, t)$-plane an $i$-rarefaction between two distinct states $u_{*}$ and $u_{**}$ is contained in the cone $\lambda_i(u_{*})\le x/t \le \lambda_i(u_{**})$.

We now collect the main properties of $\rsp$; the proofs are deferred to Subsection~\ref{sec:rsp}.

\begin{proposition}\label{prop:rsp}
The Riemann solver $\rsp$ is coherent, consistent and $\Lloc1$-continuous in $\Omega^2$.
\end{proposition}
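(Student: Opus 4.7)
The plan is to exploit the explicit wave-fan structure of the Lax solver. Since $\mathsf{D}=\Omega^2$, conditions \eqref{ch0} and \eqref{P0} are automatic. Recall that $u\doteq\rsp[u_\ell,u_r]$ is self-similar and consists of a 1-wave (shock or rarefaction) connecting $u_\ell$ to $\tilde{u}\doteq\tilde{u}(u_\ell,u_r)$, a constant mid-state $\tilde{u}$, and a 2-wave connecting $\tilde{u}$ to $u_r$. For any $\xi_o\in\R$ either $u(\xi_o^-)=u(\xi_o^+)$ (if $\xi_o$ lies outside a shock, since rarefactions are continuous) or $u(\xi_o^\pm)$ are joined by an admissible $i$-shock of speed exactly $\xi_o$. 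In the first case \eqref{ch1} follows from Proposition~\ref{pro:FernandoGaviria}. In the second case, say $i=1$, admissibility and the definition of $\mathcal{FL}_1$ imply $u(\xi_o^+)\in\mathcal{FL}_1^{u(\xi_o^-)}$, and trivially $u(\xi_o^+)\in\mathcal{BL}_2^{u(\xi_o^+)}$, so by uniqueness of the intersection $\tilde{u}(u(\xi_o^-),u(\xi_o^+))=u(\xi_o^+)$; hence $\rsp[u(\xi_o^-),u(\xi_o^+)]$ reduces to the single 1-shock of speed $\xi_o$, matching the step function in \eqref{ch1}. The case $i=2$ is symmetric.

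For consistency I would classify $\xi_o$ by the location of $u(\xi_o)$ in the fan. When $\xi_o$ lies weakly inside the 1-wave, $u(\xi_o)\in\mathcal{FL}_1^{u_\ell}$, so $\tilde{u}(u_\ell,u(\xi_o))=u(\xi_o)$ and $\rsp[u_\ell,u(\xi_o)]$ reproduces the initial portion of the original 1-wave followed by the constant $u(\xi_o)$. Symmetrically, $\tilde{u}(u(\xi_o),u_r)$ equals the original intermediate state $\tilde{u}$, so $\rsp[u(\xi_o),u_r]$ gives the remainder of the 1-wave followed by the full 2-wave. Together these identities yield \eqref{P1}. The case where $\xi_o$ sits weakly inside the 2-wave is entirely analogous (with $\mathcal{BL}_2^{u_r}$ playing the role of $\mathcal{FL}_1^{u_\ell}$), and when $\xi_o$ lies outside both waves the two "halves" each return the trivial constant on one side and the complete fan on the other. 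Property \eqref{P2} is a direct rewriting of \eqref{P1}.

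For $\Lloc1$-continuity, the first step is to show that $(u_\ell,u_r)\mapsto\tilde{u}(u_\ell,u_r)$ is continuous, which follows from the $C^1$ dependence of the Lax curves on their base point together with the transversal, unique intersection of $\mathcal{FL}_1^{u_\ell}$ and $\mathcal{BL}_2^{u_r}$ furnished by the monotonicity and concavity/convexity in Proposition~\ref{prop:lax}. Once $\tilde{u}^\varepsilon\to\tilde{u}$, the shock speeds $\sigma$ and the rarefaction endpoints $\lambda_i$ converge continuously, and on any compact $[\xi_1,\xi_2]$ the family $\rsp[u_\ell^\varepsilon,u_r^\varepsilon]$ is uniformly bounded in $\Omega$, so pointwise convergence outside the finitely many wave loci of the limit plus dominated convergence yield the $L^1$-limit. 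I expect the main obstacle to be continuity across configurations in which a wave changes type, i.e.\ when a vanishing shock of one family is replaced by a weak rarefaction of the same family or vice versa. This is controlled by property~\ref{L6}: since $\mathcal{S}_i$ and $\mathcal{R}_i$ match to second order at $\rho=\rho_*$, the amplitude and the support of the vanishing wave shrink together, so its contribution to the $L^1$ distance is $o(1)$ along the transition.
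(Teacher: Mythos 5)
Your proposal is correct and follows essentially the same route as the paper: coherence via the dichotomy between continuous traces and a shock at $\xi_o$ (using $\tilde{u}(u_\ell,\tilde{u})=\tilde{u}=\tilde{u}(\tilde{u},u_r)$), consistence via identifying the intermediate state of the cut/pasted Riemann problems according to whether $u_{\rm p}(\xi_o)$ lies on the $1$- or $2$-wave, and $\Lloc1$-continuity from the continuity of $\tilde{u}$, $\sigma$, $\lambda_1$, $\lambda_2$ (your extra care about shock/rarefaction transitions only elaborates what the paper leaves implicit). One small fix: invoking Proposition~\ref{pro:FernandoGaviria} for the case $u(\xi_o^-)=u(\xi_o^+)$ is circular, since its hypothesis is precisely coherence or consistence at $(u_0,u_0)$; instead note directly that $\rsp[u_0,u_0]\equiv u_0$ because $u_0\in\mathcal{FL}_1^{u_0}\cap\mathcal{BL}_2^{u_0}$ forces $\tilde{u}(u_0,u_0)=u_0$, so both waves are null, which is what the paper uses.
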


It is well known \cite{Hoff1985} that for any $u_0\in\Omega$, both the singleton $\{u_0\}$ and the convex set
\begin{equation}\label{eq:I4RSP0}
\mathcal{I}_{u_0} \doteq 
\left\{ u \in \Omega\colon z(u) \ge z(u_0),\ w(u) \le w(u_0)\right\},
\end{equation}
see \figurename~\ref{fig:I1}, are invariant domains of $\rsp$.
We observe that $\mathcal{I}_{u_0}$ can be written as
\[\mathcal{I}_{u_0} =
\left\{ u \in \Omega\colon \mathcal{R}_2(\mu,u_0) \le \nu \le \mathcal{R}_1(\mu,u_0) \right\} =
\left\{ u \in \Omega\colon \mathcal{R}_2(\rho,u_0) \le q \le \mathcal{R}_1(\rho,u_0) \right\}.\]
\begin{figure}[!htbp]
      \centering
      \begin{psfrags}
      \psfrag{1}[B,l]{$u_0$}
      \psfrag{m}[c,c]{$\mu$}
      \psfrag{n}[c,c]{$\nu$}
      \includegraphics[width=.23\textwidth]{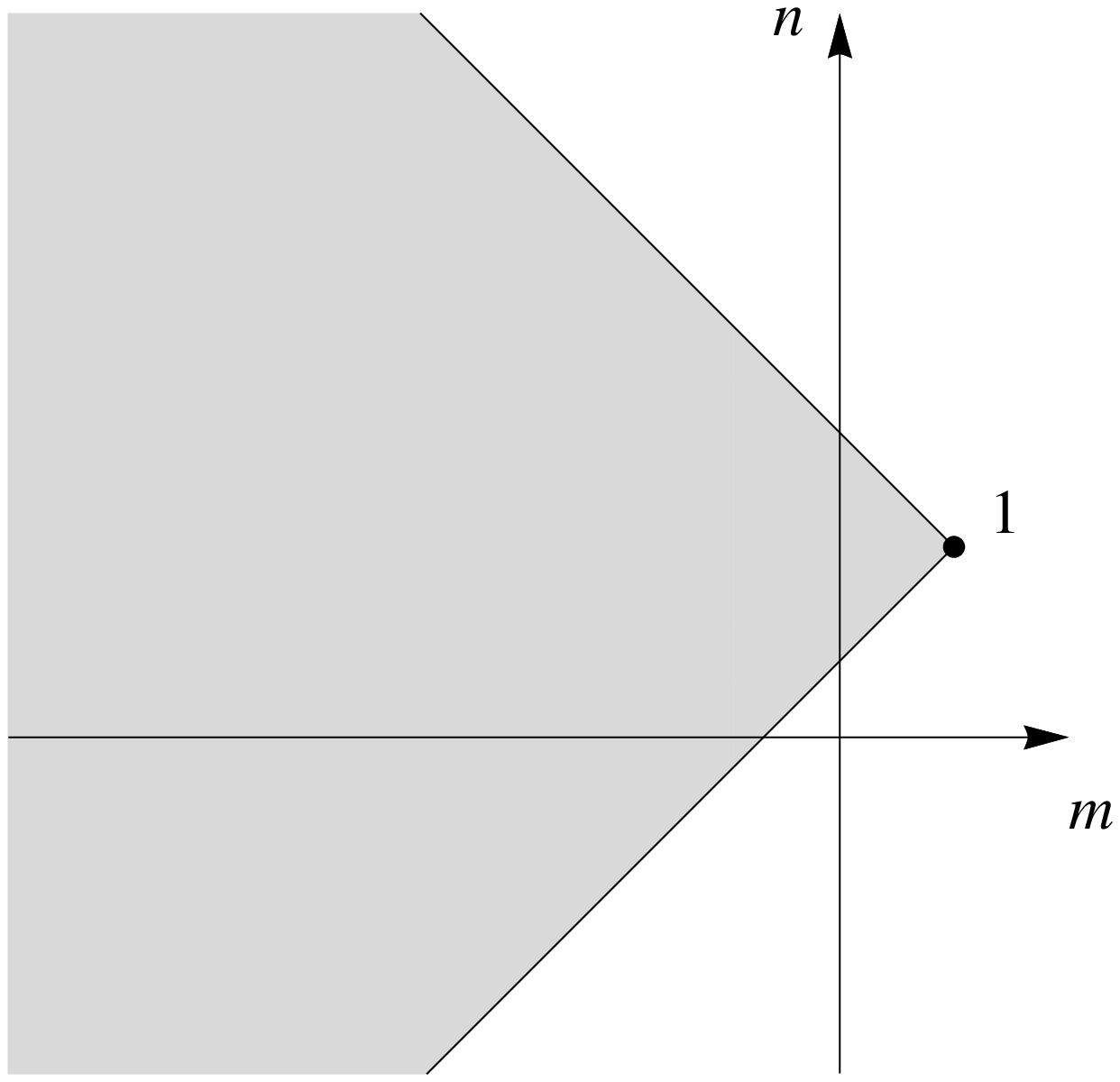}
      \end{psfrags}\qquad
      \begin{psfrags}
      \psfrag{1}[B,l]{$u_0$}
      \psfrag{m}[c,c]{$\rho$}
      \psfrag{n}[c,c]{$q$}
      \includegraphics[width=.23\textwidth]{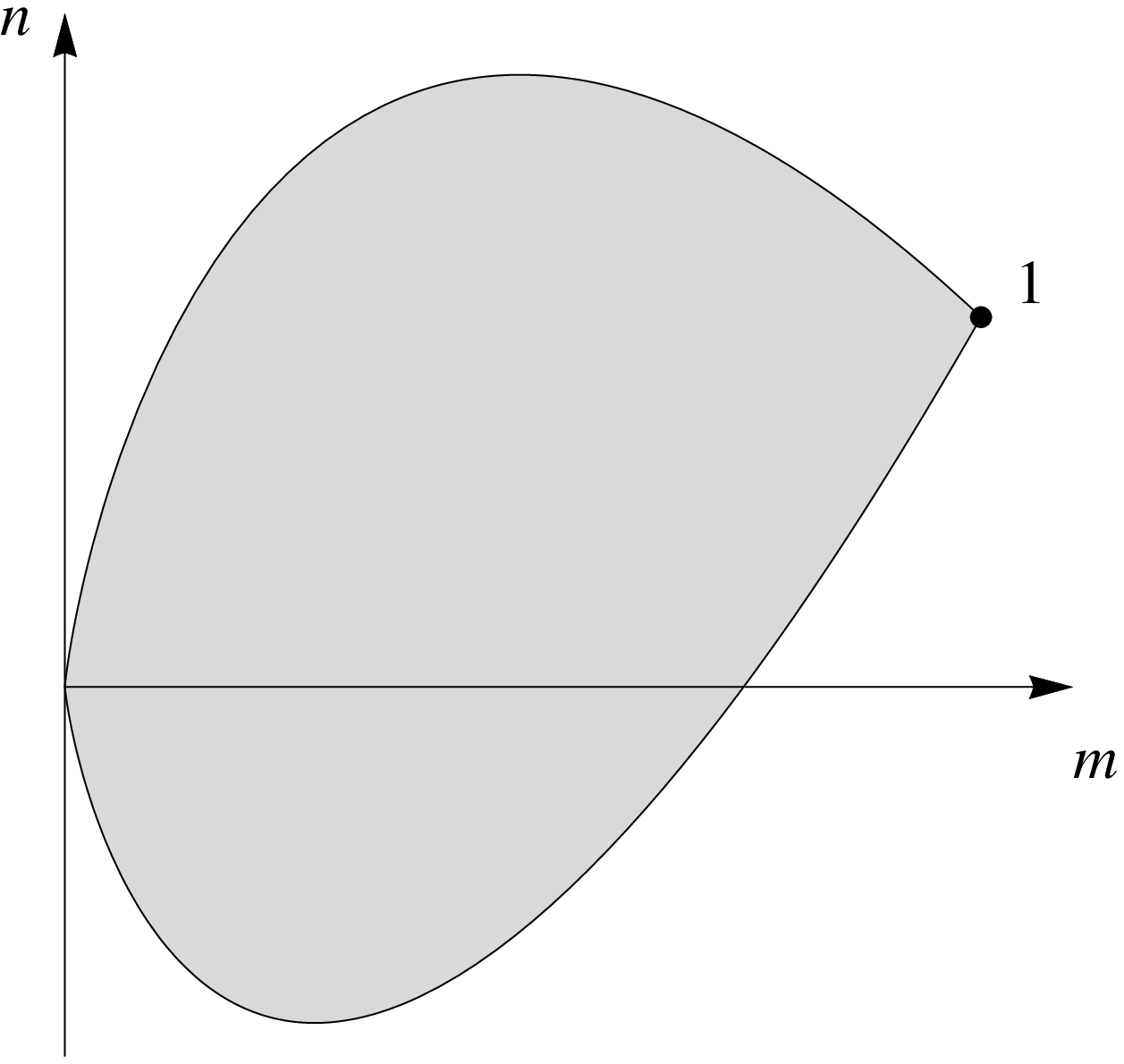}
      \end{psfrags}
      \caption{The invariant domain $\mathcal{I}_{u_0}$.}
      \label{fig:I1}
\end{figure}

Whenever it is clear from the context, we denote
\begin{align*}
&u_{\rm p} \doteq \rsp[u_\ell,u_r]&&\text{and}&&u_{\rm p}^\pm \doteq u_{\rm p}(0^\pm).
\end{align*}
Recall that $(t,x) \mapsto u_{\rm p}(x/t)$ is indeed an entropy solution to \eqref{eq:systemq},\eqref{eq:Riemann}.

\section{The gas flow through valves}\label{sec:RSv}

\subsection{The model and basic definitions}

In this section we consider the case of two pipes connected by a valve at $x=0$.
System \eqref{eq:systemq} models the flow away from the valve, while at $x=0$ we impose conditions depending on the valve and involving the traces of the solution.
More precisely, we impose no conditions at $x=0$ if the valve is \emph{open}; in this case, the valve has no influence on the flow and system \eqref{eq:systemq} describes the flow in the whole of $\R$.
If the valve is \emph{active}, then some conditions at $x=0$ have to be taken into account: the mass is conserved through the valve but in general the linear momentum is not, as a result of the force exerted by the valve.
For this reason we extend the notion of weak solution given in Definition~\ref{def:ws} to take into account the possible presence of stationary under-compressive discontinuities \cite{LeFloch-book} at $x=0$, which satisfy the first Rankine-Hugoniot condition \eqref{eq:RH1} but not necessarily the second one \eqref{eq:RH2}.

\begin{definition}
We say that $u \in \C0((0,\infty);\L\infty(\R;\Omega))$ is a \emph{coupling solution} of the Riemann problem \eqref{eq:systemq},\eqref{eq:Riemann} if

\begin{enumerate}[label={{(\roman*)}},leftmargin=*,nolistsep]\setlength{\itemsep}{0cm}\setlength\itemsep{0em}%

\item\label{item:ics}
the first Rankine-Hugoniot condition \eqref{eq:RH1} is satisfied;

\item
for any $t>0$, the functions
\begin{align*}
(t,x) \mapsto &\,\begin{cases}
u(t,x)&\text{if } x<0,
\\
u(t,0^-)&\text{if } x\ge0,
\end{cases}
&
(t,x) \mapsto &\,\begin{cases}
u(t,0^+)&\text{if } x<0,
\\
u(t,x)&\text{if } x\ge0,
\end{cases}
\intertext{are respectively weak solutions to the Riemann problems for \eqref{eq:systemq} with initial data}
u(0,x)=&\,\begin{cases}
u_\ell &\hbox{if }x<0,\\ 
u(t,0^-) &\hbox{if }x\ge0,
\end{cases}&
u(0,x)=&\,\begin{cases}
u(t,0^+) &\hbox{if }x<0,\\ 
u_r &\hbox{if }x\ge0.
\end{cases}
\end{align*}
\end{enumerate}
\end{definition}
A coupling solution $u$ is a weak solution of \eqref{eq:systemq} for $x\ne0$ and satisfies $q(t,0^-) = q(t,0^+)$ by \emph{\ref{item:ics}}.
In particular, the second Rankine-Hugoniot condition \eqref{eq:RH2} is \emph{never} verified if $u$ has an under-compressive discontinuity; in this case $u$ is \emph{not} a weak solution of \eqref{eq:systemq}.

We are now ready to extend the definition of Riemann solver to coupling solutions.
\begin{definition}
Let $\mathsf{D} \subseteq \Omega^2$ and $\rs : \mathsf{D} \to \BV(\R;\Omega)$.
We say that $\rs$ is a coupling Riemann solver for \eqref{eq:systemq} if for any $(u_\ell,u_r) \in \mathsf{D}$ the map $(t,x) \mapsto \rs[u_\ell,u_r](x/t)$ is a coupling solution to \eqref{eq:systemq},\eqref{eq:Riemann} in $(0,\infty)\times\R$.
\end{definition}
The definitions of consistence, $\Lloc1$-continuity and invariant domains given in Definition~\ref{def:RS} naturally apply to coupling Riemann solvers.
On the other hand, the extension of coherence needs some comments.
In fact, a coupling Riemann solver $\rs$ is applied only at the valve position, i.e.\ at $\xi=0$, while in $\xi\ne0$ one applies $\rsp$.
Since $\rsp$ is coherent in $\Omega^2$, see Proposition~\ref{prop:rsp}, the coherence of $\rs$ reduces to require \eqref{ch0},\eqref{ch1} at $\xi_o=0$.
As a consequence, the coherence of $\rs$ reduces to the following definition.

\begin{definition}
Let $\mathsf{D} \subseteq \Omega^2$.
A coupling Riemann solver $\rs : \mathsf{D} \to \BV(\R;\Omega)$ is coherent at $(u_\ell,u_r)\in\mathsf{D}$ if $u\doteq\rs[u_\ell,u_r]$ satisfies
\begin{align}\tag{ch$_\text{v}$.0}\label{chv0}
&\left(u(0^-),u(0^+)\right) \in \mathsf{D},
\\\tag{ch$_\text{v}$.1}\label{chv1}
&\rs\left[u(0^-),u(0^+)\right](\xi) = 
\begin{cases}
u(0^-)&\text{if } \xi<0,\\
u(0^+)&\text{if } \xi\ge0.
\end{cases}
\end{align}
\end{definition}
\noindent
It is worth to notice that, from the physical point of view, the coherence of a coupling Riemann solver avoids loop behaviors, such as intermittently and rapidly switching on and off (commuting) of the valve.
Moreover, Proposition~\ref{pro:FernandoGaviria} does not hold for coupling Riemann solvers: it may happen that a coupling Riemann solver $\rs$ is coherent at $(u_0,u_0) \in \mathsf{D}$ but $\rs [u_0,u_0]\not\equiv u_0$.

A coupling Riemann solver $\rsv\colon\mathsf{D}_{\rm v} \to \BV(\R;\Omega)$, $\mathsf{D}_{\rm v} \subseteq \Omega^2$, can be constructed by exploiting $\rsp$ as follows.
We define
\begin{align}
   &\rsv[u_\ell,u_r] \doteq 
    \rsp[u_\ell,u_r]&
    &\text{if the valve is open},
    \label{eq:rsvo}
    \\
   &\rsv[u_\ell,u_r](\xi) \doteq 
    \begin{cases}
        \rsp[u_\ell,u_m^-](\xi)&\hbox{if }\xi<0,\\
        \rsp[u_m^+,u_r](\xi)&\hbox{if }\xi\ge0,
    \end{cases}&
    &\text{if the valve is active}.
    \label{eq:rsva}
\end{align}
Above, $u_m^\pm \in \Omega$ satisfy the conditions imposed at $x=0$ by the valve, namely,
\begin{align}\label{eq:rsvqm}
&\begin{cases}
u_m^- = u_m^-(u_\ell,u_r) \doteq \hat{u}(q_m,u_\ell),
\\
u_m^+ = u_m^+(u_\ell,u_r)  \doteq \check{u}(q_m,u_r),
\end{cases}
&&
q_m = q_m(u_\ell,u_r) \in \mathcal{Q}^-_{u_\ell} \cap \mathcal{Q}^+_{u_r},
\intertext{where}\nonumber
&\mathcal{Q}^-_{u_\ell} \doteq
\begin{cases}
(-\infty,\bar{q}(u_\ell)]&\text{if } v_\ell \leq a,
\\
(-\infty,q_\ell]&\text{if } v_\ell > a,
\end{cases}&
&\mathcal{Q}^+_{u_r} \doteq
\begin{cases}
[\underline{q}(u_r),\infty)&\text{if } v_r\ge-a,
\\
\left[q_r,\infty\right)&\text{if } v_r<-a.
\end{cases}
\end{align}
By \eqref{eq:acqua} we have $0 \in \mathcal{Q}^-_{u_\ell} \cap \mathcal{Q}^+_{u_r}\neq\emptyset$; by \eqref{eq:rsvqm} it follows $\rho_m^- \ge \bar{\rho}(u_\ell)$, $\rho_m^+ \ge \underline{\rho}(u_r)$, $q_m^- = q_m^+ = q_m$.

The main rationale of condition \eqref{eq:rsvqm} lies in the fact that according to this choice 
\begin{align*}
\xi\mapsto\rsp[u_\ell,u^-_m](\xi) \in \mathcal{FL}_1^{u_\ell}&
&\text{and }&
&\xi\mapsto\rsp[u^+_m,u_r](\xi) \in \mathcal{FL}_2^{u_m^+}
\end{align*}
are single waves, with negative and positive speed, respectively.
As a consequence, $\rsv[u_\ell,u_r](0^\pm) = u_m^\pm$.
Moreover, if $\rsv[u_\ell,u_r]$ contains a stationary under-compressive discontinuity at $x=0$, then $u^\pm_m$ satisfy the first Rankine-Hugoniot condition \eqref{eq:RH1}.

In conclusion, a valve is characterized by prescribing both when it is either open or active and the choice of the flow $q_m$ through the valve when it is active.
Once we specify these conditions, then the gas flow through the valve can be modeled by $\rsv$.
For notational simplicity, whenever it is clear from the context, we let
\begin{align*}
&u_{\rm v} \doteq \rsv[u_\ell,u_r]&&\text{and}&&u_{\rm v}^\pm \doteq u_{\rm v}(0^\pm).
\end{align*}

For a fixed $\rsv$, we denote by $\mathsf{O}$ and $\mathsf{A}$ the sets of Riemann data such that $\rsv$ leaves the valve open or active, respectively.
The domain of definition $\mathsf{D}_{\rm v} \doteq \mathsf{O} \cup \mathsf{A}$ of $\rsv$ does not necessarily coincide with the whole $\Omega^2$; in this case, we understand Riemann data in $\Omega^2 \setminus \mathsf{D}_{\rm v}$ as not being in the operating range of the valve.
Moreover, it may happen that there exists $(u_\ell,u_r) \in \mathsf{A}$ such that $u_{\rm p} \equiv u_{\rm v}$.
This happens, for instance, if $(u_\ell,u_r) \in \mathsf{A}$ is such that $\tilde{u}(u_\ell,u_r) = \hat{u}(0,u_\ell) = \check{u}(0,u_r)$ and $q_m=0$ in \eqref{eq:rsvqm}: the valve is closed but has no influence on the flow through $x=0$.
This motivates the introduction of the sets 
\begin{align*}
&\mathsf{A}_\mathsf{N} \doteq \left\{(u_\ell,u_r) \in \mathsf{A} \colon u_{\rm v} \equiv u_{\rm p} \right\}
= \left\{(u_\ell,u_r) \in \mathsf{A} \colon \hat{u}(q_m,u_\ell) = \tilde{u}(u_\ell,u_r) = \check{u}(q_m,u_r) \right\},&&\mathsf{A}_\mathsf{I}=\mathsf{A}\setminus\mathsf{A}_\mathsf{N},
\end{align*}
of Riemann data for which the valve is active and either influences or not the gas flow, respectively.
We also introduce
\[\mathsf{A}_\mathsf{I}^{\scriptscriptstyle\complement}\doteq \mathsf{D}_{\rm v} \setminus \mathsf{A}_\mathsf{I} = \mathsf{O} \cup \mathsf{A}_\mathsf{N}
= \left\{(u_\ell,u_r) \in \mathsf{D}_{\rm v} \colon u_{\rm v}\equiv u_{\rm p} \right\}.\]

\begin{proposition}
Assume that $\rsv$ is coherent at $(u_\ell,u_r)$.
\begin{enumerate}[label={(\roman*)},leftmargin=*,nolistsep]\setlength{\itemsep}{0cm}\setlength\itemsep{0em}%
\item
If $(u_\ell,u_r) \in \mathsf{A}_\mathsf{I}^{\scriptscriptstyle\complement}$, then $(u_{\rm v}^-, u_{\rm v}^+) \in \mathsf{A}_\mathsf{I}^{\scriptscriptstyle\complement}$.
\item
If $(u_\ell,u_r) \in \mathsf{A}_\mathsf{I}$ and $\hat{u}(q_m,\hat{u}(q_m,u_\ell)) = \hat{u}(q_m,u_\ell)$, then $(u_{\rm v}^-, u_{\rm v}^+) \in \mathsf{A}_\mathsf{I}$.
\end{enumerate}
\end{proposition}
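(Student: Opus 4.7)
For part (i), the argument is short. The hypothesis $(u_\ell,u_r)\in\mathsf{A}_\mathsf{I}^{\scriptscriptstyle\complement}$ gives $u_{\rm v}\equiv u_{\rm p}$ and in particular $u_{\rm v}^\pm=u_{\rm p}^\pm$. Coherence of $\rsv$ at $(u_\ell,u_r)$, condition \eqref{chv1}, yields that $\rsv[u_{\rm v}^-,u_{\rm v}^+]$ is the step function taking value $u_{\rm v}^-$ for $\xi<0$ and $u_{\rm v}^+$ for $\xi\ge 0$. By Proposition~\ref{prop:rsp}, $\rsp$ is coherent on all of $\Omega^2$, so condition \eqref{ch1} applied to $u_{\rm p}$ at $\xi_o=0$ yields that $\rsp[u_{\rm p}^-,u_{\rm p}^+]=\rsp[u_{\rm v}^-,u_{\rm v}^+]$ is the very same step function. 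Hence $\rsv[u_{\rm v}^-,u_{\rm v}^+]\equiv\rsp[u_{\rm v}^-,u_{\rm v}^+]$, which is exactly the defining property of $(u_{\rm v}^-,u_{\rm v}^+)\in\mathsf{A}_\mathsf{I}^{\scriptscriptstyle\complement}$.

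For part (ii), the plan is by contradiction. I would first establish the auxiliary fact $u_{\rm v}^-\ne u_{\rm v}^+$: if they coincided, their common value would lie in $\mathcal{FL}_1^{u_\ell}\cap\mathcal{BL}_2^{u_r}=\{\tilde u(u_\ell,u_r)\}$, so that $u_{\rm v}$ reduces to a $1$-wave from $u_\ell$ to $\tilde u$ glued to a $2$-wave from $\tilde u$ to $u_r$---exactly the Lax solution $u_{\rm p}$, contradicting $(u_\ell,u_r)\in\mathsf{A}_\mathsf{I}$. Coherence of $\rsv$ then gives $(u_{\rm v}^-,u_{\rm v}^+)\in\mathsf{D}_{\rm v}$ and that $\rsv[u_{\rm v}^-,u_{\rm v}^+]$ is the step function with a genuine jump at $\xi=0$.

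I now claim $\rsp[u_{\rm v}^-,u_{\rm v}^+]$ is \emph{not} this step function. Since $u_{\rm v}^-\ne u_{\rm v}^+$ and $q_{\rm v}^-=q_{\rm v}^+=q_m$, equality of the Lax solution with the step at $\xi=0$ would force a Lax-admissible stationary shock joining $u_{\rm v}^-$ and $u_{\rm v}^+$: the second Rankine--Hugoniot condition \eqref{eq:RH2} for a zero-speed jump reduces to $q_m^2=a^2\,\rho_{\rm v}^-\rho_{\rm v}^+$, while Lax admissibility requires either $v_{\rm v}^-\ge a$ (stationary $1$-shock) or $v_{\rm v}^+\le -a$ (stationary $2$-shock). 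The hypothesis $\hat u(q_m,u_{\rm v}^-)=u_{\rm v}^-$ says that $u_{\rm v}^-$ is the largest-$\rho$ intersection of $\mathcal{FL}_1^{u_{\rm v}^-}$ with $q=q_m$; using the geometry of the Lax curves given by Proposition~\ref{prop:lax} (strict concavity of $\mathcal{FL}_1$, with maximum achieved at the sonic line $\nu=1$), this forces $v_{\rm v}^-<a$ and rules out the $1$-shock case. The $2$-shock case is ruled out by the analogous structural property of $u_{\rm v}^+=\check u(q_m,u_r)$: being the largest-$\rho$ intersection of the strictly convex curve $\mathcal{BL}_2^{u_r}$ with $q=q_m$ forces $u_{\rm v}^+$ to lie on the branch where $v>-a$.

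Once the Lax solution is known to differ from the step function, the conclusion follows. By coherence, $(u_{\rm v}^-,u_{\rm v}^+)\in\mathsf{O}$ would give $\rsv[u_{\rm v}^-,u_{\rm v}^+]=\rsp[u_{\rm v}^-,u_{\rm v}^+]=\text{step}$, contradicting the previous paragraph; hence $(u_{\rm v}^-,u_{\rm v}^+)\in\mathsf{A}$. Since $\rsv[u_{\rm v}^-,u_{\rm v}^+]$ (step) differs from $\rsp[u_{\rm v}^-,u_{\rm v}^+]$ (non-step), we have $(u_{\rm v}^-,u_{\rm v}^+)\notin\mathsf{A}_\mathsf{N}$, giving $(u_{\rm v}^-,u_{\rm v}^+)\in\mathsf{A}_\mathsf{I}$.

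The \emph{main obstacle} lies in the geometric step ruling out the two stationary-shock cases. The hypothesis $\hat u(q_m,u_{\rm v}^-)=u_{\rm v}^-$ handles the $1$-shock side directly, but the $2$-shock side requires carefully unpacking the structural consequences of $u_{\rm v}^+=\check u(q_m,u_r)$ without having a mirror hypothesis in the statement. This is most transparent in the $(\mu,\nu)$-coordinates, where the Lax curves admit the explicit formulas in terms of $\Xi$ and the sign information $\Xi'<0$, $\zeta\,\Xi''<0$ from Proposition~\ref{prop:lax} makes the location of the rightmost intersections, and hence the sonic bounds on $v_{\rm v}^\pm$, readily verifiable.
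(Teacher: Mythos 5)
Part (i) of your proposal is correct, and in fact more direct than the paper's own argument: you only use \eqref{chv0} to place $(u_{\rm v}^-,u_{\rm v}^+)$ in $\mathsf{D}_{\rm v}$, \eqref{chv1} to identify $\rsv[u_{\rm v}^-,u_{\rm v}^+]$ with the step function, and the coherence of $\rsp$ (Proposition~\ref{prop:rsp}, at $\xi_o=0$) to see that $\rsp[u_{\rm p}^-,u_{\rm p}^+]$ is the same step function, whence the pair satisfies the defining property $u_{\rm v}\equiv u_{\rm p}$ of $\mathsf{A}_\mathsf{I}^{\scriptscriptstyle\complement}$. The paper instead argues by contradiction, unwinding $\rsv[u_{\rm v}^-,u_{\rm v}^+]$ through \eqref{eq:rsva},\eqref{eq:rsvqm}; both routes rest on the same ingredients, yours is shorter. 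Part (ii) follows the paper's skeleton: the traces are distinct, carry the same flow $q_m$, coherence forces the step function, and a step between distinct states with equal $q$ can only be a single Lax stationary shock of one family; the $1$-shock case is excluded by the hypothesis $\hat{u}(q_m,u_{\rm v}^-)=u_{\rm v}^-$ exactly as in the paper (minor quibble: admissibility of a stationary $1$-shock needs $v_{\rm v}^->a$ strictly, and the hypothesis only yields $v_{\rm v}^-\le a$, which is still enough).

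The gap is the stationary $2$-shock case, which you flag as the main obstacle but do not close, and the justification you sketch would fail as stated. It is not true that being the largest-$\rho$ intersection of the strictly convex curve $\mathcal{BL}_2^{u_r}$ with $q=q_m$ forces $v>-a$: when $u_r$ is supersonic with $v_r<-a$, the minimum of $\mathcal{BL}_2^{u_r}$ lies on the shock branch at a point whose velocity is $-\tfrac a2\left(\sqrt{r}+1/\sqrt{r}\right)<-a$ (with $r=\rho_{\min}/\rho_r>1$), so for levels $q_m$ just above $\underline{q}(u_r)$ the rightmost intersection still has $v<-a$, and convexity alone does not exclude a Lax stationary $2$-shock issuing from it. What rescues the claim is precisely the admissibility constraint in \eqref{eq:rsvqm}, which you never invoke: if $v_r<-a$ then $q_m\in\mathcal{Q}^+_{u_r}=[q_r,\infty)$, the rightmost intersection at level $q_m\ge q_r$ lies at a $\rho$ no smaller than that of $\check{u}(q_r,u_r)$ (the stationary-shock partner of $u_r$, whose velocity is $a^2/v_r\in(-a,0)$), and $v$ increases with $\rho$ along $\mathcal{S}_2^{u_r}$, so $v_{\check{u}(q_m,u_r)}\ge a^2/v_r>-a$; if instead $v_r\ge-a$ the minimum of $\mathcal{BL}_2^{u_r}$ is at the sonic point and the claim is immediate. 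Equivalently, what is needed is the idempotency $\check{u}\left(q_m,\check{u}(q_m,u_r)\right)=\check{u}(q_m,u_r)$ for $q_m<0$, which is what the paper's ``the latter case is dealt analogously'' implicitly appeals to. So the missing step is true and fixable, but your argument for it would break exactly for supersonic right data, and this is the one place where the restriction $q_m\in\mathcal{Q}^-_{u_\ell}\cap\mathcal{Q}^+_{u_r}$ must enter.
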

\begin{proof}
$(i)$~Let $(u_\ell,u_r) \in \mathsf{A}_\mathsf{I}^{\scriptscriptstyle\complement}$ and assume $(u_{\rm v}^-, u_{\rm v}^+) \in \mathsf{A}_\mathsf{I}$ by contradiction.
Since $u_{\rm v} \equiv u_{\rm p}$, we have $u_{\rm v}^\pm = u_{\rm p}^\pm$; hence from \eqref{chv1} and \eqref{eq:rsva},\eqref{eq:rsvqm} it follows
\[\begin{cases}
u_{\rm p}^-&\text{if } \xi<0\\
u_{\rm p}^+&\text{if } \xi\ge0
\end{cases} =
\rsv\left[u_{\rm v}^-,u_{\rm v}^+\right](\xi) = 
\begin{cases}
\rsp[u_{\rm p}^-,\hat{u}\left(q_m,u_{\rm p}^-\right)](\xi)&\hbox{if }\xi<0,\\
\rsp[\check{u}\left(q_m,u_{\rm p}^+\right),u_{\rm p}^+](\xi)&\hbox{if }\xi\ge0,
\end{cases}\]
with $\hat{u}(q_m,u_{\rm p}^-) \ne \check{u}(q_m,u_{\rm p}^+)$.
The above equation implies that $\hat{u}(q_m,u_{\rm p}^-) = u_{\rm p}^-$ and $\check{u}(q_m,u_{\rm p}^+) = u_{\rm p}^+$, whence $u_{\rm p}^- \ne u_{\rm p}^+$.
Thus, $u_{\rm p}$ has a stationary shock $(u_{\rm p}^-, u_{\rm p}^+)$, which can be either a 1-shock with $u_{\rm p}^- = u_\ell$, $u_{\rm p}^+ = \hat{u}(q_m,u_\ell) = \check{u}(q_m,u_r)$ and $q_m>0$, or a 2-shock with  $u_{\rm p}^+ = u_r$, $u_{\rm p}^- = \check{u}(q_m,u_r) = \hat{u}(q_m,u_\ell)$ and $q_m<0$.
In the former case we have $\check{u}(q_m,u_{\rm p}^+) = \check{u}(q_m,\check{u}(q_m,u_r)) = \check{u}(q_m,u_r)$ because $q_m>0$, whence $\check{u}(q_m,u_{\rm p}^+) = \check{u}(q_m,u_r) = \hat{u}(q_m,u_\ell) = \hat{u}(q_m,u_{\rm p}^-)$, a contradiction.
The latter case is dealt analogously.

$(ii)$~Let $(u_\ell,u_r) \in \mathsf{A}_\mathsf{I}$ be such that $\hat{u}(q_m,\hat{u}(q_m,u_\ell)) = \hat{u}(q_m,u_\ell)$; assume $(u_{\rm v}^-, u_{\rm v}^+) \in \mathsf{A}_\mathsf{I}^{\scriptscriptstyle\complement}$ by contradiction.
Since $(u_\ell,u_r) \in \mathsf{A}_\mathsf{I}$, we have $u_{\rm v}^- = \hat{u}(q_m,u_\ell) \ne \check{u}(q_m,u_r) = u_{\rm v}^+$ and $q_{\rm v}^- = q_m = q_{\rm v}^+$.
By \eqref{chv1} we have
\[
\rsp\left[u_{\rm v}^-,u_{\rm v}^+\right](\xi) =
\rsv\left[u_{\rm v}^-,u_{\rm v}^+\right](\xi) = 
\begin{cases}
u_{\rm v}^-&\text{if } \xi<0,\\
u_{\rm v}^+&\text{if } \xi\ge0.
\end{cases}
\]
Hence, either $\rsp[u_{\rm v}^-,u_{\rm v}^+]$ is a stationary 1-shock with $u_{\rm v}^+ = \hat{u}(q_m,u_{\rm v}^-)$ and $q_m>0$, or is stationary 2-shock with $u_{\rm v}^- = \check{u}(q_m,u_{\rm v}^+)$ and $q_m<0$.
In the former case $\check{u}(q_m,u_r) = u_{\rm v}^+ = \hat{u}(q_m,u_{\rm v}^-) = \hat{u}(q_m,\hat{u}(q_m,u_\ell))=\hat{u}(q_m,u_\ell)$, a contradiction.
The latter case is dealt analogously.
\end{proof}

\begin{proposition}\label{pro:consist}
The coupling Riemann solver $\rsv$ is consistent at $(u_\ell, u_r) \in \mathsf{D}_{\rm v}$ if and only if:
\begin{align}\label{Pv0}\tag{cn$_{\rm v}$.0}
&\left(u_\ell,u_{\rm v}(\xi_o)\right), \left(u_{\rm v}(\xi_o),u_r\right) \in \mathsf{D}_{\rm v}\text{ for any }\xi_o\in\R;
\\\label{Pv1}\tag{cn$_{\rm v}$.1}
&\begin{cases}\begin{cases}
\left(u_\ell,u_{\rm v}(\xi_o)\right) \in \mathsf{A}_\mathsf{I}^{\scriptscriptstyle\complement} 
\text{ and }\hat{u}\left(q_m,u_{\rm v}(\xi_o)\right) = \hat{u}\left(q_m,u_\ell\right),
&\text{for any }\xi_o<0,
\\
\left(u_{\rm v}(\xi_o),u_r\right) \in \mathsf{A}_\mathsf{I}^{\scriptscriptstyle\complement} 
\text{ and }\check{u}\left(q_m,u_{\rm v}(\xi_o)\right)=\check{u}\left(q_m,u_r\right),
&\text{for any }\xi_o\ge0,
\end{cases}
&\text{if }(u_\ell,u_r) \in \mathsf{A}_\mathsf{I},
\\[10pt]
\begin{cases}
\left(u_\ell,u_{\rm v}(\xi_o)\right) \in \mathsf{A}_\mathsf{I}^{\scriptscriptstyle\complement},
&\text{ for any }\xi_o\in\R,\\
\left(u_{\rm v}(\xi_o),u_r\right) \in \mathsf{A}_\mathsf{I}^{\scriptscriptstyle\complement},
&\text{ for any }\xi_o\in\R,
\end{cases}
&\text{if }(u_\ell,u_r) \in \mathsf{A}_\mathsf{I}^{\scriptscriptstyle\complement}.
\end{cases}
\end{align}
\end{proposition}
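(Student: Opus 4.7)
The plan is to establish the biconditional by case analysis on whether $(u_\ell,u_r)$ lies in $\mathsf{A}_\mathsf{I}^{\scriptscriptstyle\complement}$ or in $\mathsf{A}_\mathsf{I}$, exploiting the structural description of $u_{\rm v}=\rsv[u_\ell,u_r]$ supplied by \eqref{eq:rsvo}--\eqref{eq:rsvqm} together with the consistency of $\rsp$ from Proposition~\ref{prop:rsp}. Condition \eqref{Pv0} is a direct restatement of \eqref{P0}, and \eqref{P2} follows from \eqref{P1} at the two sides of $\xi_o$; hence the substance of the proof lies in comparing the two cut solutions $\rsv[u_\ell,u_{\rm v}(\xi_o)]$ and $\rsv[u_{\rm v}(\xi_o),u_r]$ with the truncations of $u_{\rm v}$ prescribed by \eqref{P1}.

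In the first case, $(u_\ell,u_r)\in\mathsf{A}_\mathsf{I}^{\scriptscriptstyle\complement}$, one has $u_{\rm v}\equiv u_{\rm p}$, so the truncations of $u_{\rm v}$ carry no stationary under-compressive discontinuity at $x=0$. Thus \eqref{P1} forces $\rsv$ to behave as $\rsp$ on the cut pairs, which is equivalent to $(u_\ell,u_{\rm v}(\xi_o))$ and $(u_{\rm v}(\xi_o),u_r)$ both belonging to $\mathsf{A}_\mathsf{I}^{\scriptscriptstyle\complement}$. Conversely, under this membership the cut $\rsv$-solutions coincide with the corresponding $\rsp$-solutions, and the consistency of $\rsp$ on $\Omega^2$ delivers \eqref{P1} at once.

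In the second case, $(u_\ell,u_r)\in\mathsf{A}_\mathsf{I}$, the solution $u_{\rm v}$ decomposes as a $1$-wave on $\mathcal{FL}_1^{u_\ell}$ from $u_\ell$ to $u_m^-=\hat{u}(q_m,u_\ell)$ (non-positive speeds), a stationary jump from $u_m^-$ to $u_m^+=\check{u}(q_m,u_r)\neq u_m^-$, and a $2$-wave on $\mathcal{FL}_2^{u_m^+}$ from $u_m^+$ to $u_r$ (non-negative speeds). For $\xi_o<0$, $u_{\rm v}(\xi_o)\in\mathcal{FL}_1^{u_\ell}$, and the left truncation of $u_{\rm v}$ is a pure Lax $1$-wave without any jump at $x=0$: this forces $(u_\ell,u_{\rm v}(\xi_o))\in\mathsf{A}_\mathsf{I}^{\scriptscriptstyle\complement}$. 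The right truncation must then reproduce the remaining $1$-wave piece, the same stationary jump, and the complete $2$-wave; via \eqref{eq:rsva}--\eqref{eq:rsvqm} this requires the valve to act on $(u_{\rm v}(\xi_o),u_r)$ with the same flux $q_m$ and the same $u_m^-$, i.e.\ $\hat{u}(q_m,u_{\rm v}(\xi_o))=\hat{u}(q_m,u_\ell)$. The sub-case $\xi_o\ge 0$ is handled symmetrically, exchanging the roles of $\mathcal{FL}_1^{u_\ell}$ and $\mathcal{FL}_2^{u_m^+}$, and produces the second line of \eqref{Pv1}. For the converse direction, given \eqref{Pv0}--\eqref{Pv1}, I would reconstruct each cut solution from \eqref{eq:rsva}--\eqref{eq:rsvqm} together with the Lax-curve properties in Proposition~\ref{prop:lax}, and verify that it matches the prescribed truncation of $u_{\rm v}$.

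The hard part is the forward direction in the $\mathsf{A}_\mathsf{I}$ case: one must exclude that $\rsv$ applied to a cut pair might select a flux $q_m'\neq q_m$ whose intermediate states happen to be compatible with the prescribed truncation. This requires exploiting the injective selection $\hat{u}(q,\cdot)$, $\check{u}(q,\cdot)$ determined by the geometry of $\mathcal{FL}_1^{u_\ell}$ and $\mathcal{BL}_2^{u_r}$ in Proposition~\ref{prop:lax}, and carefully separating the rarefaction versus shock sub-cases around $u_m^\pm$, so that continuity of $q$ across $x=0$ together with the trace values imposed by the truncation forces $q_m'=q_m$.
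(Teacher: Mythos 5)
Your proposal is correct and follows essentially the same route as the paper: reduce consistency to \eqref{P1} (after noting that \eqref{P0} is \eqref{Pv0} and that \eqref{P1} formally implies \eqref{P2}), split according to $(u_\ell,u_r)\in\mathsf{A}_\mathsf{I}$ or $\mathsf{A}_\mathsf{I}^{\scriptscriptstyle\complement}$, and use the consistence of $\rsp$ to identify the truncations of $u_{\rm v}$ with the solutions of the cut Riemann problems. The ``hard part'' you flag is settled in the paper exactly by the trace matching you sketch: rewriting the right truncation, via the consistence of $\rsp$, as the juxtaposition of $\rsp[u_{\rm v}(\xi_o),u_m^-]$ for $\xi<0$ and $\rsp[u_m^+,u_r]$ for $\xi\ge0$ and comparing with \eqref{eq:rsva},\eqref{eq:rsvqm} at $\xi=0^\pm$ pins down the intermediate states $u_m^\pm$ (hence the flux $q_m$), so no separate injectivity argument or shock/rarefaction case analysis around $u_m^\pm$ is needed.
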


\begin{proof}
Clearly \eqref{P0} is equivalent to \eqref{Pv0}.
Assume that $(u_\ell,u_r) \in \mathsf{A}_\mathsf{I}$.
If $\xi_o<0$ (the case $\xi_o\ge0$ is dealt analogously), then $u_{\rm v}(\xi_o) = \rsp[u_\ell,u_m^-](\xi_o)$ and by the consistence of $\rsp$ we have
\begin{align*}
&\begin{cases}
u_{\rm v}(\xi)&\text{if }\xi<\xi_o\\
u_{\rm v}(\xi_o)&\text{if }\xi\ge\xi_o
\end{cases}
=\begin{cases}
\rsp[u_\ell,u_m^-](\xi)&\text{if }\xi<\xi_o\\
\rsp[u_\ell,u_m^-](\xi_o)&\text{if }\xi\ge\xi_o
\end{cases}
=\rsp[u_\ell,u_{\rm v}(\xi_o)](\xi),
\\
&\begin{cases}
u_{\rm v}(\xi_o)&\text{if }\xi<\xi_o\\
u_{\rm v}(\xi)&\text{if }\xi\ge\xi_o
\end{cases}
=\begin{cases}
\rsp[u_\ell,u_m^-](\xi_o)&\text{if }\xi<\xi_o\\
\rsp[u_\ell,u_m^-](\xi)&\text{if }\xi\in[\xi_o,0)\\
\rsp[u_m^+,u_r](\xi)&\text{if }\xi\ge0
\end{cases}
=\begin{cases}
\rsp[\rsp[u_\ell,u_m^-](\xi_o),u_m^-](\xi)&\text{if }\xi<0,\\
\rsp[u_m^+,u_r](\xi)&\text{if }\xi\ge0.
\end{cases}
\end{align*}
Therefore \eqref{P1} reduces to 
\begin{align}\label{e:fakecond}
&\left(u_\ell,u_{\rm v}(\xi_o)\right) \in \mathsf{A}_\mathsf{I}^{\scriptscriptstyle\complement},&
&\rsv[u_{\rm v}(\xi_o),u_r](\xi)
=\begin{cases}
\rsp[u_{\rm v}(\xi_o),u_m^-](\xi)&\text{if }\xi<0,\\
\rsp[u_m^+,u_r](\xi)&\text{if }\xi\ge0.
\end{cases}
\end{align}
We observe that the above condition also implies \eqref{P2}; indeed, by the consistence of $\rsp$ we have
\begin{align*}
&~\begin{cases}
\rsv\left[u_\ell,u_{\rm v}(\xi_o)\right](\xi)& \hbox{if } \xi < \xi_o\\
\rsv\left[u_{\rm v}(\xi_o),u_r\right](\xi) & \hbox{if } \xi \geq \xi_o
\end{cases}
=\begin{cases}
\rsp\left[u_\ell,u_{\rm v}(\xi_o)\right](\xi)& \hbox{if } \xi < \xi_o\\
\rsp[u_{\rm v}(\xi_o),u_m^-](\xi)&\text{if }\xi\in[\xi_o,0)\\
\rsp[u_m^+,u_r](\xi)&\text{if }\xi\ge0
\end{cases}
\\=&~\begin{cases}
\rsp[u_\ell,u_m^-](\xi)&\text{if }\xi<0\\
\rsp[u_m^+,u_r](\xi)&\text{if }\xi\ge0
\end{cases}
=\rsv[u_\ell,u_r](\xi).
\end{align*}
To prove that \eqref{e:fakecond} is in fact equivalent to \eqref{Pv1} it is sufficient to observe that it writes
\begin{align*}
&\left(u_\ell,u_{\rm v}(\xi_o)\right) \in \mathsf{A}_\mathsf{I}^{\scriptscriptstyle\complement},&
&\hat{u}\left(q_m,u_{\rm v}(\xi_o)\right) = u_m^- = \hat{u}\left(q_m,u_\ell\right),&
&\left(u_{\rm v}(\xi_o),u_r\right) \in \mathsf{A}_\mathsf{I},
\end{align*}
and that the second condition above implies the last one because by assumption $(u_\ell,u_r) \in \mathsf{A}_\mathsf{I}$.

Assume now that $(u_\ell,u_r) \in \mathsf{A}_\mathsf{I}^{\scriptscriptstyle\complement}$.
In this case $u_{\rm v}\equiv u_{\rm p}$ and \eqref{P1} reduces to require \eqref{Pv1} by the consistence of $\rsp$.
At last, \eqref{Pv1} also implies \eqref{P2} by the consistence of $\rsp$.
\end{proof}

\begin{corollary}
If $(u_0,u_0) \in \mathsf{A}_\mathsf{I}$, then $\rsv$ is not consistent at any point of $(\{u_0\}\times\Omega) \cup (\Omega\times\{u_0\})$.
\end{corollary}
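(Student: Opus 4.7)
The plan is to contradict the characterization in Proposition~\ref{pro:consist}, exploiting finite speed of propagation: outside a bounded $\xi$-range the Riemann fan $\rsv[u_0,u_r]$ is identically equal to its left datum, so there is a convenient $\xi_o$ at which the middle state equals $u_0$, and then the hypothesis $(u_0,u_0)\in\mathsf{A}_\mathsf{I}$ clashes with the requirements of \eqref{Pv1}.

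I would first handle a pair $(u_0,u_r)\in\{u_0\}\times\Omega$ with $(u_0,u_r)\in\mathsf{D}_{\rm v}$ (otherwise consistence is undefined there and nothing has to be shown). Set $u\doteq\rsv[u_0,u_r]$ and pick $\xi_o<0$ small enough that $u\equiv u_0$ on $(-\infty,\xi_o]$; in particular $u(\xi_o)=u_0$. Assume, for contradiction, that $\rsv$ is consistent at $(u_0,u_r)$. Whether $(u_0,u_r)\in\mathsf{A}_\mathsf{I}$ or $(u_0,u_r)\in\mathsf{A}_\mathsf{I}^{\scriptscriptstyle\complement}$, the first line of \eqref{Pv1} evaluated at this $\xi_o<0$ demands $\bigl(u_0,u(\xi_o)\bigr)=(u_0,u_0)\in\mathsf{A}_\mathsf{I}^{\scriptscriptstyle\complement}$, directly contradicting the hypothesis $(u_0,u_0)\in\mathsf{A}_\mathsf{I}$.

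The symmetric case $(u_\ell,u_0)\in\Omega\times\{u_0\}$ is treated identically: choose $\xi_o>0$ large enough that $\rsv[u_\ell,u_0](\xi_o)=u_0$ (again by finite speed of propagation, since the right datum is $u_0$) and read off the second lines of \eqref{Pv1}, which once more force $(u_0,u_0)\in\mathsf{A}_\mathsf{I}^{\scriptscriptstyle\complement}$. I do not anticipate a real obstacle: once Proposition~\ref{pro:consist} is available, the entire argument is a one-line reading of one of its conditions at a single suitably chosen $\xi_o$. The only subtlety worth making explicit is that a single $\xi_o$ is enough to contradict the universally quantified requirements in \eqref{Pv1}, so there is no need to analyse the behaviour of the Riemann fan inside its wave cone.
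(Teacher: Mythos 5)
Your argument is correct and is essentially the paper's own proof: both use finite speed of propagation to pick a $|\xi_o|$ large enough that the relevant trace of the Riemann fan equals $u_0$, and then note that every branch of \eqref{Pv1} in Proposition~\ref{pro:consist} forces $(u_0,u_0)\in\mathsf{A}_\mathsf{I}^{\scriptscriptstyle\complement}$, contradicting $(u_0,u_0)\in\mathsf{A}_\mathsf{I}$. Your explicit remark that one suitably chosen $\xi_o$ suffices, and the dispatch of pairs outside $\mathsf{D}_{\rm v}$, are fine minor clarifications but not a different route.
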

\begin{proof}
Let $(u_0,u_0) \in \mathsf{A}_\mathsf{I}$ and fix $u_\ell, u_r\in\Omega$.
By the finite speed of propagation of the waves there exists $\xi_o>0$ sufficiently big such that
\[\left(u_0,\rsv[u_0,u_r](-\xi_o)\right) = \left(\rsv[u_\ell,u_0](\xi_o),u_0\right) = \left(u_0,u_0\right) \in \mathsf{A}_\mathsf{I}.\]
By Proposition~\ref{pro:consist} it is easy then to conclude that $\rsv$ is consistent neither at $(u_0,u_r)$ nor at $(u_\ell,u_0)$.
\end{proof}

If two pipes are connected by a \emph{one-way valve}, the flow at $x=0$ occurs in a single direction only, say positive; in this case we consider coupling Riemann solvers of the form \eqref{eq:rsva},\eqref{eq:rsvqm} with $q_m\ge0$.
Such a valve is also called clack valve, non-return valve or check valve.

\subsection{Examples of valves}

We conclude this section by considering some examples of pressure-relief valves.

\begin{example}\label{ex:1}
Consider a two-way electronic valve which is either open or closed, see \figurename~\ref{fig:bidirectional_valve-electronic}.
\begin{figure}[!htbp]
      \centering
        \includegraphics[height=.1\textheight]{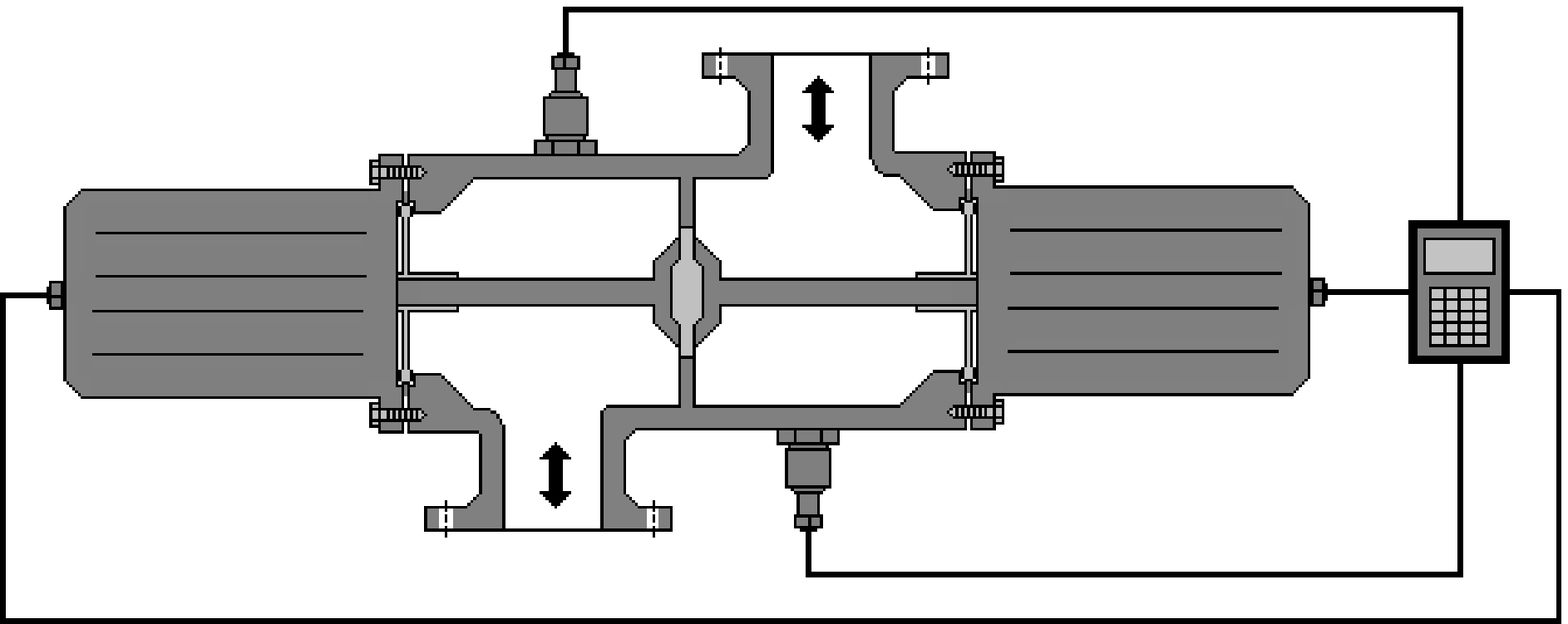}\qquad
        \includegraphics[height=.1\textheight]{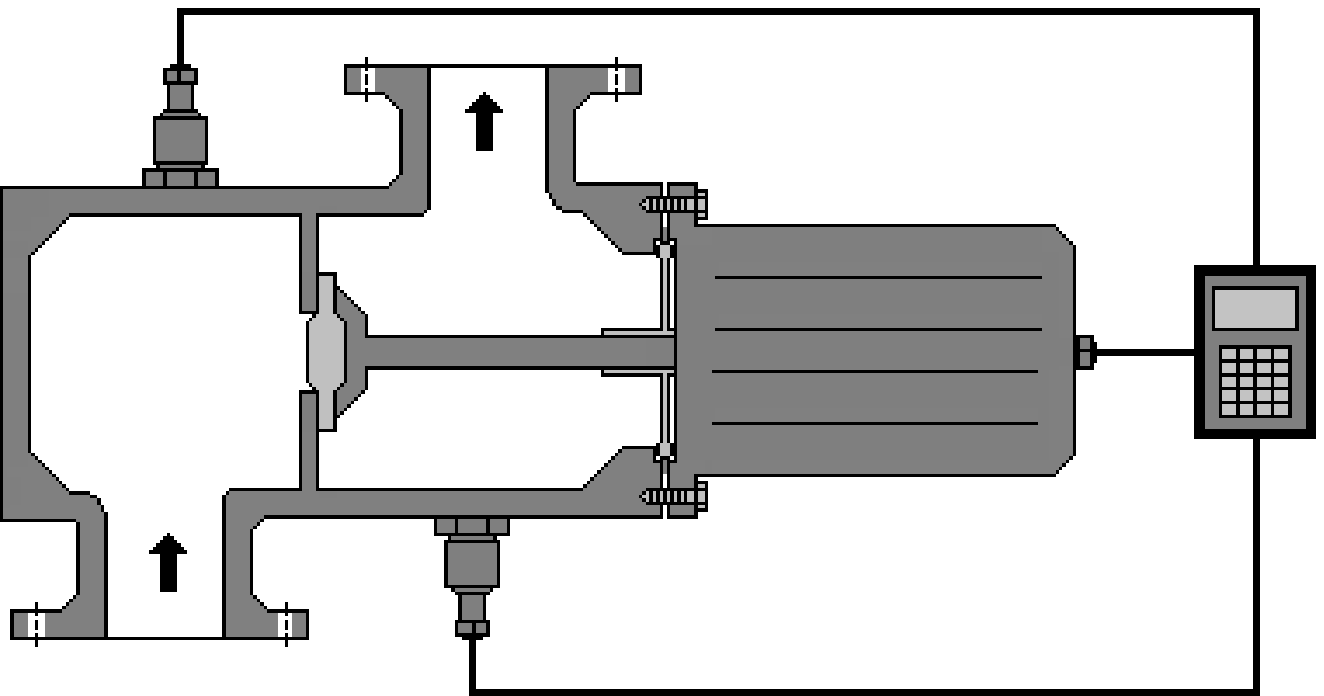}
      \caption{A two-way electronic valve, left, and a one-way one, right.}
\label{fig:bidirectional_valve-electronic}
\end{figure}
More precisely, the valve is equipped with a control unit and two sensors, one on each side of the valve seat.
Depending on data $(u_\ell,u_r)$ received from the sensors, the control unit closes the valve if the jump of the pressure across $x=0$ corresponding to a closed valve, namely $|\check{p}(0,u_r) - \hat{p}(0,u_\ell)|$, is less or equal than a fixed constant $M>0$; otherwise, the control unit opens the valve.
Such valve is modeled by the coupling Riemann solver $\rsv$ defined for any $(u_\ell,u_r) \in \Omega^2$ as follows:
\begin{gather}
\label{PR1}\tag{pr.1}
\begin{minipage}[l]{.8\textwidth}
if $| \check{p}(0,u_r) - \hat{p}(0,u_\ell) | \leq M$, then the valve is active (closed) and $\rsv[u_\ell,u_r]$ has the \\\indent$\quad$ form \eqref{eq:rsva},\eqref{eq:rsvqm} with $q_m=0$;
\end{minipage}
\\\label{PR2}\tag{pr.2}
\begin{minipage}[l]{.8\textwidth}
if $| \check{p}(0,u_r) - \hat{p}(0,u_\ell) | > M$, then the valve is open.
\end{minipage}
\end{gather}
This valve is studied in details in Section~\ref{sec:bev}.
\end{example}

\begin{example}
Consider a two-way spring-loaded valve, which can be either open or closed, see \figurename~\ref{fig:bidirectional_valve-spring}, and let $M>0$ be the ``resistance'' of the spring.
\begin{figure}[!htbp]
      \centering
        \includegraphics[height=.1\textheight]{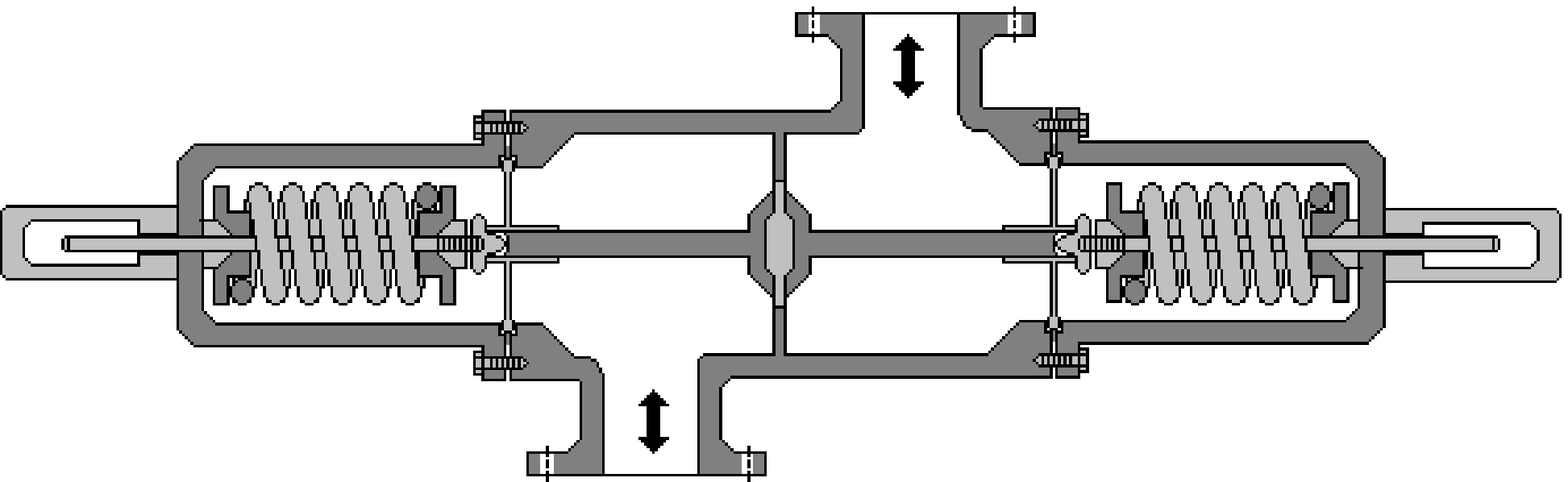}
        \qquad
        \includegraphics[height=.08\textheight]{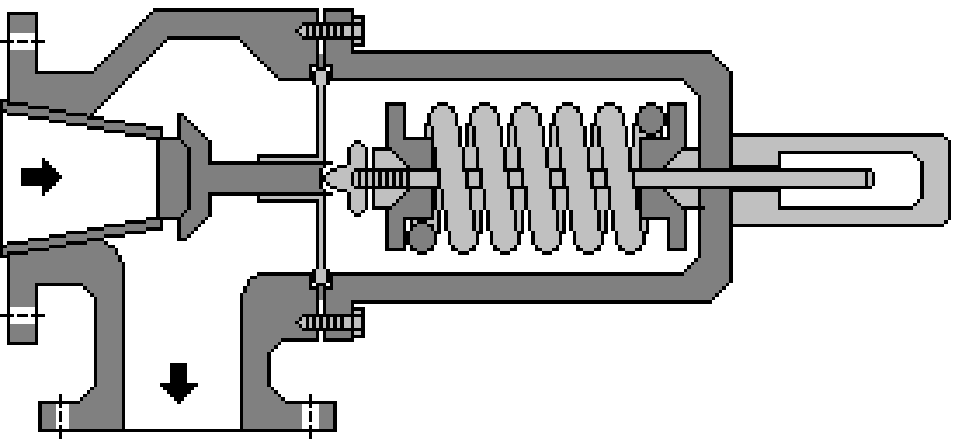}
      \caption{A two-way spring-loaded valve, left, and a one-way one, right.}
\label{fig:bidirectional_valve-spring}
\end{figure}
Then the valve is closed (active) if the jump of the pressure across $x=0$, namely $|p(\rho_r) - p(\rho_\ell)|$, is less or equal than $M$; otherwise it is open.
In this case $\rsv$ is defined for any $(u_\ell,u_r) \in \Omega^2$ as follows:
\begin{gather*}
\begin{minipage}[l]{.8\textwidth}
if $| p(\rho_r) - p(\rho_\ell) | \leq M$, then the valve is active (closed) and $\rsv[u_\ell,u_r]$ has the \\\indent$\quad$ form \eqref{eq:rsva},\eqref{eq:rsvqm} with $q_m=0$;
\end{minipage}
\\
\begin{minipage}[l]{.8\textwidth}
if $| p(\rho_r) - p(\rho_\ell) | > M$, then the valve is open.
\end{minipage}
\end{gather*}
\end{example}

\begin{example}
To each valve considered in the previous examples corresponds a one-way valve, see \figurename~\ref{fig:bidirectional_valve-electronic} and \figurename~\ref{fig:bidirectional_valve-spring}.
\end{example}

\begin{example}
Consider a one-way valve such that~\cite[1-8, Equation 1-6]{crane1988flow}
\begin{equation}\label{eq:prv00}
p(t,0^+) = p(t,0^-) - a^2k \, \dfrac{q(t,0)^2}{p(t,0^-)},
\end{equation}
where $k$ is a positive constant.
The above condition substitutes the second Rankine-Hugoniot condition \eqref{eq:RH2} at $x=0$.
Then $\rsv$ has the form given in \eqref{eq:rsva},\eqref{eq:rsvqm} with $u^\pm_m$ satisfying \eqref{eq:prv00}, namely $u_m^- = \hat{u}(q_m,u_\ell)$, $u_m^+ = \check{u}(q_m,u_r)$ and $q_m$ satisfying
\begin{align*}
&\check{p}(q_m,u_r)  = \hat{p}(q_m,u_\ell) - a^2k \, \dfrac{q_m^2}{\hat{p}(q_m,u_\ell)},&
&q_m \in \mathcal{Q}^-_{u_\ell} \cap \mathcal{Q}^+_{u_r}.
\end{align*}
\end{example}

\section{A case study: two-way electronic pressure valve}\label{sec:bev}

In this section we apply the theory developed in the previous sections to model the two-way electronic pressure valve, see Example~\ref{ex:1}.
Such a valve is either open or closed (active); this corresponds to consider a Riemann solver $\rsv$ of the form \eqref{eq:rsvo}--\eqref{eq:rsvqm} with $q_m=0$.
We recall that $0 \in \mathcal{Q}^-_{u_\ell} \cap \mathcal{Q}^+_{u_r}$ for any $u_\ell,u_r\in\Omega$.
We denote for brevity
\begin{align*}
&\hat{u}(\cdot) \doteq \hat{u}(0,\cdot),&
&\check{u}(\cdot) \doteq \check{u}(0,\cdot),&
&\tilde{u}\doteq\tilde{u}(u_\ell,u_r),&
&\hat{u}_\ell\doteq\hat{u}(u_\ell),&
&\check{u}_\ell\doteq\check{u}(u_\ell),&
&\bar{u}_\ell\doteq\bar{u}(u_\ell),
\end{align*}
and so on, whenever it is clear from the context that $\hat{u}$, $\check{u}$, $\tilde{u}$ and so on are not functions.
We have
\begin{align}\label{eq:hat0}
	\hat{\rho}_\ell &= 
	\begin{cases}
	\dfrac{\rho_\ell}{4a^2} \left[ \sqrt{v_\ell^2 + 4a^2} + v_\ell \right]^2
	&\text{if }v_\ell>0,
	\\
	\rho_\ell \, \exp\left[v_\ell/a\right]
	&\text{if }v_\ell\leq0,
	\end{cases}&
	\hat{\mu}_\ell &= 
	\begin{cases}
	\mu_\ell-\Xi^{-1}(\nu_\ell)&\text{if }\nu_\ell>0,
	\\
	\mu_\ell+\nu_\ell
	&\text{if }\nu_\ell \le 0,
	\end{cases}
	\\[5pt]\label{eq:check0}
	\check{\rho}_r &=
	\begin{cases}
	\rho_r \, \exp\left[-v_r/a\right]
	&\text{if }v_r>0,
	\\
	\dfrac{\rho_r}{4a^2} \left[ \sqrt{v_r^2 + 4a^2} - v_r\right]^2
	&\text{if }v_r\le0,
	\end{cases}&
	\check{\mu}_r &= 
	\begin{cases}
	\mu_r-\nu_r&\text{if }\nu_r>0,
	\\
	\mu_r+\Xi^{-1}(\nu_r)
	&\text{if }\nu_r \le 0.
	\end{cases}
\end{align}
We finally observe that $\hat{u}$ and $\check{u}$ are idempotent because $q_m=0$, that is
\begin{align}\label{e:idempotent}
&\hat{u} \circ \hat{u} \equiv \hat{u}&
&\text{and}&
&\check{u} \circ \check{u} \equiv \check{u}.
\end{align}

By \eqref{PR1},\eqref{PR2} we have $\mathsf{D}_{\rm v} = \Omega^2$ and
\begin{gather*}
\mathsf{A} = \left\{ (u_\ell,u_r) \in \Omega^2 \colon 
\left|\check{p}_r - \hat{p}_\ell\right| \le M \right\},
\qquad
\mathsf{O} = \left\{ (u_\ell,u_r) \in \Omega^2 \colon 
\left|\check{p}_r - \hat{p}_\ell\right| > M \right\}
\\
\mathsf{A}_\mathsf{N} = \left\{(u_\ell,u_r) \in \mathsf{A} \colon \hat{u}_\ell = \tilde{u} = \check{u}_r \right\}
= \left\{(u_\ell,u_r) \in \Omega^2 \colon \tilde{q} = 0 \right\},
\qquad
\mathsf{A}_\mathsf{I} = \Omega^2 \setminus \mathsf{A}_\mathsf{N}.
\end{gather*}

\begin{figure}[!htbp]
      \centering
      \begin{psfrags}
      \psfrag{1}[c,r]{$\mathsf{A}$}
      \psfrag{2}[c,l]{$\mathsf{O}$}
      \psfrag{3}[c,c]{$\mathsf{A}_\mathsf{N}$}
      \psfrag{4}[c,c]{$\mathsf{A}_\mathsf{I}$}
      \psfrag{5}[c,c]{$\mathsf{O}_\mathsf{A}$}
      \psfrag{6}[c,c]{$\mathsf{O}_\mathsf{O}$}
        \includegraphics[height=.1\textheight]{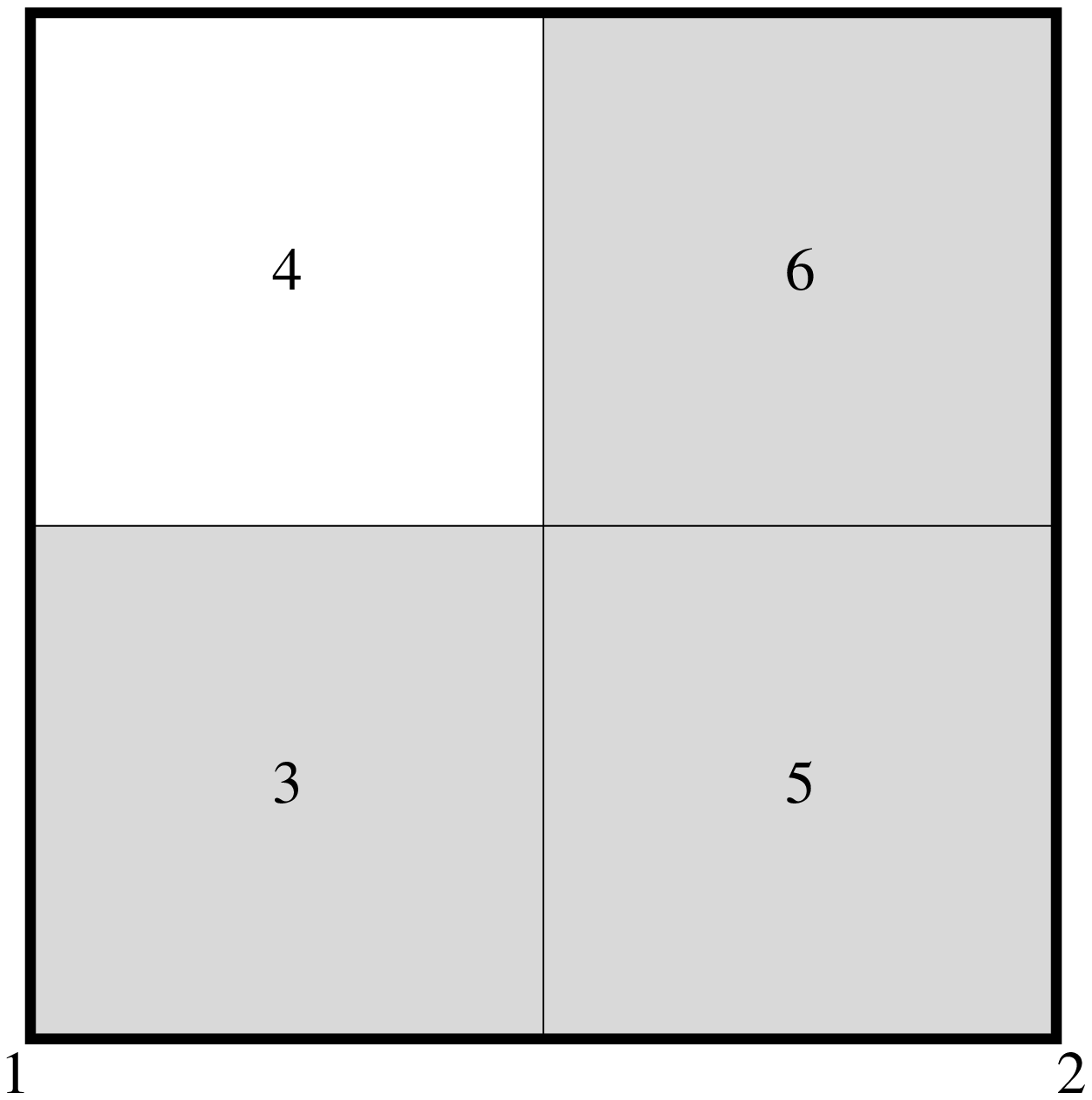}
      \end{psfrags}
      \caption{Schematic representation of the subsets of $\mathsf{D}_{\rm v}=\Omega^2$. The shaded area corresponds to $\mathsf{A}_\mathsf{I}^{\scriptscriptstyle\complement}$.}
\label{fig:skema}
\end{figure}

We collect in the following theorem our main results; we defer the proof to Subsection~\ref{sec:proofteo}.

\begin{theorem}\label{teo}
We have the following results:
\begin{enumerate}[label={(\Roman*)},itemindent=*,leftmargin=0pt]\setlength{\itemsep}{0cm}\setlength\itemsep{0em}%
\item\label{I:pro:4.1}
The coherence domain of $\rsv$ is $\mathsf{CH} = \mathsf{A} \cup \mathsf{O}_{\mathsf{O}}$, where, see \figurename\ \ref{fig:skema},
\[\mathsf{O}_{\mathsf{O}} \doteq \left\{ (u_\ell,u_r) \in \mathsf{O} \colon (u_{\rm p}^-,u_{\rm p}^+) \in \mathsf{O} \right\}.\]
\item\label{I:prop:consistrsv}
The consistence domain of $\rsv$ is $\mathsf{CN}=\mathsf{CN}_1\cup\mathsf{CN}_2 = \mathsf{CN}_\mathsf{O} \cup \mathsf{CN}_\mathsf{A}$, where
\begin{align*}
\mathsf{CN}_1&\doteq\left\{(u_\ell,u_r) \in \mathsf{A}_\mathsf{I} \colon 
\left(u_\ell,u_{\rm v}(\xi_o^-)\right),
\left(u_{\rm v}(\xi_o^+),u_r\right) \in \mathsf{A}_\mathsf{I}^{\scriptscriptstyle\complement},
\text{ for any }\xi_o^-<0\le\xi_o^+\right\},
\\
\mathsf{CN}_2&\doteq\left\{(u_\ell,u_r) \in \mathsf{A}_\mathsf{I}^{\scriptscriptstyle\complement} \colon 
\left(u_\ell,u_{\rm v}(\xi_o)\right),
\left(u_{\rm v}(\xi_o),u_r\right) \in \mathsf{A}_\mathsf{I}^{\scriptscriptstyle\complement},
\text{ for any }\xi_o\in\R\right\},
\\
\mathsf{CN}_\mathsf{O}&\doteq 
\left\{(u_\ell,u_r) \in \mathsf{O} \colon 
\begin{array}{@{}l@{}}
(u_\ell,u_\ell), (u_r,u_r), (u_\ell,\tilde{u}), (\tilde{u},u_r) \in \mathsf{A}_\mathsf{I}^{\scriptscriptstyle\complement}\\
\text{ and }q_{\rm p}\ne0\text{ along any rarefaction}
\end{array}\right\},
\\
\mathsf{CN}_\mathsf{A}&\doteq \left\{(u_\ell,u_r) \in \mathsf{A} \colon q_\ell \ge 0 \ge q_r,\ (u_\ell,u_\ell) \in \mathsf{A}_\mathsf{I}^{\scriptscriptstyle\complement},\ (u_r,u_r) \in \mathsf{A}_\mathsf{I}^{\scriptscriptstyle\complement} \right\}.
\end{align*}
\item
The $\Lloc1$-continuity domain of $\rsv$ is $\mathsf{L}=\{(u_\ell,u_r) \in \Omega^2 \colon |\check{p}_r - \hat{p}_\ell| \ne M\}$.
\item\label{IV}
If $u_0\in\Omega$ is such that $q_0=0$, then $\mathcal{I}_{u_0}$ defined by \eqref{eq:I4RSP0} is an invariant domain of $\rsv$.
\end{enumerate}
\end{theorem}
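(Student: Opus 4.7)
My overall plan is to handle the four parts of Theorem~\ref{teo} separately, relying on four ingredients: the idempotence \eqref{e:idempotent} of $\hat{u}$ and $\check{u}$, the abstract characterization of consistence in Proposition~\ref{pro:consist}, the coherence/consistence/$\Lloc1$-continuity of $\rsp$ from Proposition~\ref{prop:rsp}, and the classical invariance of $\mathcal{I}_{u_0}$ for $\rsp$.

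For part~\ref{I:pro:4.1} I would split according to whether $(u_\ell,u_r)\in\mathsf{A}$ or $\mathsf{O}$. In the active case, $u_{\rm v}^-=\hat{u}_\ell$ and $u_{\rm v}^+=\check{u}_r$ both have zero flow, so \eqref{e:idempotent} gives $\hat{u}(u_{\rm v}^-)=u_{\rm v}^-$ and $\check{u}(u_{\rm v}^+)=u_{\rm v}^+$, and $|\check{p}(u_{\rm v}^+)-\hat{p}(u_{\rm v}^-)|=|\check{p}_r-\hat{p}_\ell|\le M$; hence $(u_{\rm v}^-,u_{\rm v}^+)\in\mathsf{A}$ and $\rsv$ applied to it produces the trivial stationary jump required by \eqref{chv1}. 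In the open case $u_{\rm v}\equiv u_{\rm p}$, the coherence of $\rsp$ takes care of the wave structure and the only extra condition needed is that $(u_{\rm p}^-,u_{\rm p}^+)$ again leaves the valve open; otherwise $\rsv$ would close at the trace pair and \eqref{chv1} would be violated. This yields exactly $\mathsf{CH}=\mathsf{A}\cup\mathsf{O}_\mathsf{O}$.

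For part~\ref{I:prop:consistrsv} the identity $\mathsf{CN}=\mathsf{CN}_1\cup\mathsf{CN}_2$ is a specialization of Proposition~\ref{pro:consist} to $q_m=0$: along the outgoing 1-wave, an intermediate trace $u_{\rm v}(\xi_o)$ either equals an endpoint (shock) or lies on $\mathcal{R}_1^{u_\ell}$ (rarefaction), and property~\ref{L1} gives $\mathcal{R}_1^{u_{\rm v}(\xi_o)}=\mathcal{R}_1^{u_\ell}$, so $\hat{u}(0,u_{\rm v}(\xi_o))=\hat{u}_\ell$ automatically; the 2-wave is analogous. The finer description $\mathsf{CN}_\mathsf{O}\cup\mathsf{CN}_\mathsf{A}$ requires a case analysis: in $\mathsf{CN}_\mathsf{A}$, unless $q_\ell\ge 0\ge q_r$ the Lax waves emitted from $x=0$ produce traces pairing with $u_\ell$ (or $u_r$) to form elements of $\mathsf{A}_\mathsf{I}$, violating $\mathsf{A}_\mathsf{I}^{\scriptscriptstyle\complement}$; in $\mathsf{CN}_\mathsf{O}$ the condition that $q_{\rm p}\neq 0$ along each rarefaction excludes the exceptional traces giving $\mathsf{A}_\mathsf{N}$-type pairs. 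For part~(III), $\mathsf{A}$ and $\mathsf{O}$ are separated by the hypersurface $\{|\check{p}_r-\hat{p}_\ell|=M\}$; away from it, perturbations stay in the same regime, so $\Lloc1$-continuity follows from Proposition~\ref{prop:rsp} combined with the continuity of $(u_\ell,u_r)\mapsto(\hat{u}_\ell,\check{u}_r)$ visible in \eqref{eq:hat0},\eqref{eq:check0}. On the boundary, since $M>0$ one has $\hat{p}_\ell\ne\check{p}_r$, so $\hat{u}_\ell\ne\check{u}_r$ and the closed-valve configuration differs from $\rsp[u_\ell,u_r]$ on a set of positive measure near $x=0$, creating a uniform $\L1$-gap between the one-sided limits.

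For part~\ref{IV}, assume $q_0=0$ with $u_0=(\rho_0,0)$ and $u_\ell,u_r\in\mathcal{I}_{u_0}$. Since $\mathcal{I}_{u_0}$ is invariant for $\rsp$, it is enough to show $\hat{u}_\ell,\check{u}_r\in\mathcal{I}_{u_0}$, and by symmetry I focus on $\hat{u}_\ell$. Because its flow vanishes, \eqref{eq:I4RSP0} reduces the claim to $\hat{\rho}_\ell\le\rho_0$. If $v_\ell\le 0$ the point $\hat{u}_\ell$ lies on $\mathcal{R}_1^{u_\ell}$ and $w$ is preserved, so $\log\hat{\rho}_\ell=w(u_\ell)\le w(u_0)=\log\rho_0$; if $v_\ell>0$, \eqref{eq:hat0} gives $\hat\mu_\ell=\mu_\ell-\Xi^{-1}(\nu_\ell)$ and combining with $\mu_\ell+\nu_\ell=w(u_\ell)\le\mu_0$ reduces $\hat\mu_\ell\le\mu_0$ to $\Xi^{-1}(\nu_\ell)\ge -\nu_\ell$ for $\nu_\ell>0$, equivalent to the elementary $2\sinh(\nu_\ell/2)\ge\nu_\ell$. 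The main obstacle I foresee is part~\ref{I:prop:consistrsv}: translating the abstract condition~\eqref{Pv1} into the explicit, checkable descriptions $\mathsf{CN}_\mathsf{O}$ and $\mathsf{CN}_\mathsf{A}$ requires, for every admissible trace $u_{\rm v}(\xi_o)$, deciding whether the associated pair with $u_\ell$ or $u_r$ lies in $\mathsf{O}$, $\mathsf{A}_\mathsf{N}$, or $\mathsf{A}_\mathsf{I}$; the sign conditions $q_\ell\ge 0\ge q_r$ and the ``$q_{\rm p}\ne 0$ along rarefactions'' requirement both emerge only after a careful geometric analysis built on \ref{L1}--\ref{L7}.
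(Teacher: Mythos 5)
Your overall strategy coincides with the paper's: coherence via idempotence on $\mathsf{A}$ and via the coherence of $\rsp$ on $\mathsf{O}$, consistence via Proposition~\ref{pro:consist}, continuity via the continuity of $\rsp$ and of $(u_\ell,u_r)\mapsto(\hat u_\ell,\check u_r)$ with failure on $\{|\check p_r-\hat p_\ell|=M\}$ shown by approximating from $\mathsf{O}$, and invariance by checking $\hat u_\ell,\check u_r\in\mathcal I_{u_0}$ (your explicit computation with $w$ and $2\sinh(\nu/2)\ge\nu$ is correct and fills in what the paper only asserts). Parts (III), (IV) and the identity $\mathsf{CN}=\mathsf{CN}_1\cup\mathsf{CN}_2$ (where your use of \ref{L1} to get $\hat u(u_{\rm v}(\xi_o))=\hat u_\ell$ along a 1-rarefaction is exactly the paper's argument) are essentially complete.

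There are, however, two genuine gaps. First, in part~\ref{I:pro:4.1} the nontrivial direction is $\mathsf{O}_\mathsf{A}\cap\mathsf{CH}=\emptyset$, and your justification --- ``otherwise $\rsv$ would close at the trace pair and \eqref{chv1} would be violated'' --- assumes precisely what must be proved: if $(u_{\rm p}^-,u_{\rm p}^+)\in\mathsf{A}$ it could a priori happen that the closed-valve solution $\rsv[u_{\rm p}^-,u_{\rm p}^+]$ coincides with the pure jump, in which case \eqref{chv1} would hold and the theorem would be false. One must show this cannot occur: \eqref{chv1} forces $u_{\rm p}^-=\hat u(u_{\rm p}^-)$ and $u_{\rm p}^+=\check u(u_{\rm p}^+)$, so both traces have zero flow; by \eqref{eq:RH2} a stationary shock between distinct zero-flow states is impossible, hence $u_{\rm p}^-=u_{\rm p}^+=\tilde u$ with $\tilde q=0$, which gives $\hat u_\ell=\tilde u=\check u_r$ and contradicts $|\check p_r-\hat p_\ell|>M$. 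Second, the characterization $\mathsf{CN}=\mathsf{CN}_\mathsf{O}\cup\mathsf{CN}_\mathsf{A}$ is not proved at all: you only state that ``a careful geometric analysis'' is needed. This is the bulk of the paper's proof (a case analysis over $q_\ell>0>q_r$, $q_\ell=0>q_r$, etc., in $\mathsf{A}$, split further according to $\mathsf{A}_\mathsf{I}$ versus $\mathsf{A}_\mathsf{N}$, plus the perturbation argument showing that if $q_\ell<0$, or if $q_{\rm p}$ vanishes along a rarefaction, then some nearby trace $u_{\rm v}(\xi_o\pm\varepsilon)$ pairs with $u_\ell$ or $u_r$ to give an element of $\mathsf{A}_\mathsf{I}$, destroying consistence), and both inclusions --- necessity of the listed conditions and their sufficiency --- must be checked. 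Incidentally, your phrase that the rarefaction condition ``excludes the exceptional traces giving $\mathsf{A}_\mathsf{N}$-type pairs'' is slightly off: pairs in $\mathsf{A}_\mathsf{N}$ are harmless (they lie in $\mathsf{A}_\mathsf{I}^{\scriptscriptstyle\complement}$); the obstruction comes from the $\mathsf{A}_\mathsf{I}$ pairs produced just beside the point where $q_{\rm p}=0$.
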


Since the sets $\mathsf{O}_{\mathsf{O}}$ and $\mathsf{O}_\mathsf{A} \doteq \mathsf{O} \setminus \mathsf{O}_\mathsf{O} = \{ (u_\ell,u_r) \in \mathsf{O} \colon (u_{\rm p}^-,u_{\rm p}^+) \in \mathsf{A} \}$ play an important role in the coherence of $\rsv$, we provide their characterization in the following proposition; we defer the proof to Subsection~\ref{sec:coo}.
We introduce, see \figurename~\ref{fig:Phiii},
\begin{align*}
&\Phi(\nu) \doteq a^2 e^{\nu} \left[ e^{\Xi^{-1}(\nu)} - e^{\nu} \right],&&\nu\in\R.
\end{align*}
\begin{figure}[!htbp]
      \centering
      \begin{psfrags}
      \psfrag{m}[c,B]{$\nu$}
      \psfrag{n}[c,B]{$\Phi$}
      \psfrag{1}[c,B]{$\nu_c$}
      \psfrag{2}[l,B]{$\max\Phi$}
      \psfrag{3}[l,B]{$-1$}
        \includegraphics[height=.12\textheight]{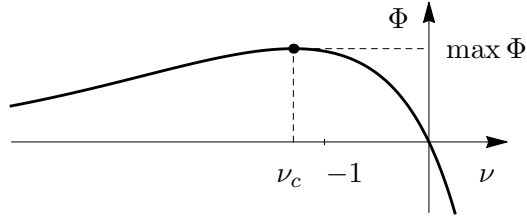}
      \end{psfrags}
      \caption{The function $\Phi$. Notice that $\Phi$ is decreasing in $[\nu_c,\infty)$, with $\nu_c < -1$.}
\label{fig:Phiii}
\end{figure}

\begin{proposition}\label{prop:coo}
We have $\mathsf{O}_{\mathsf{O}} = \bigcup_{i=1}^4\mathsf{O}_{\mathsf{O}}^i$ and $\mathsf{O}_\mathsf{A} = \bigcup_{j=1}^2 \mathsf{O}_\mathsf{A}^j$, where
\begin{align*}
\mathsf{O}_{\mathsf{O}}^1 &\doteq 
\left\{ (u_\ell,u_r) \in \mathsf{O} \colon
\tilde{\nu} > \max\{0,\nu_\ell\},\ 
e^{\mu_\ell+\nu_\ell} \, \Phi\left(-\max\{1,\nu_\ell\} \cdot \min\{1,\tilde{\nu}\}\right) > M
\right\},
\\
\mathsf{O}_{\mathsf{O}}^2 &\doteq 
\left\{ (u_\ell,u_r) \in \mathsf{O} \colon
\tilde{\nu} < \min\{0,\nu_r\},\ 
e^{\mu_r-\nu_r} \, \Phi\left(-\min\{-1,\nu_r\} \cdot \max\{-1,\tilde{\nu}\}\right) > M
\right\},
\\
\mathsf{O}_{\mathsf{O}}^3 &\doteq \left\{ (u_\ell,u_r) \in \mathsf{O} \colon 
0 < \tilde{\nu} \le \nu_\ell \right\},
\\
\mathsf{O}_{\mathsf{O}}^4 &\doteq \left\{ (u_\ell,u_r) \in \mathsf{O} \colon
\nu_r \le \tilde{\nu} < 0 \right\},
\intertext{and}
\mathsf{O}_\mathsf{A}^1 &\doteq 
\left\{ (u_\ell,u_r) \in \mathsf{O} \colon
\tilde{\nu} > \max\{0,\nu_\ell\},\ 
e^{\mu_\ell+\nu_\ell} \, \Phi\left(-\max\{1,\nu_\ell\} \cdot \min\{1,\tilde{\nu}\}\right) \le M
\right\},
\\
\mathsf{O}_\mathsf{A}^2 &\doteq 
\left\{ (u_\ell,u_r) \in \mathsf{O} \colon
\tilde{\nu} < \min\{0,\nu_r\},\ 
e^{\mu_r-\nu_r} \, \Phi\left(-\min\{-1,\nu_r\} \cdot \max\{-1,\tilde{\nu}\}\right) \le M
\right\}.
\end{align*}
The subsets $\mathsf{O}_{\mathsf{O}}^i$, $i\in\{1,2,3,4\}$, and $\mathsf{O}_\mathsf{A}^j$, $j\in\{1,2\}$, are mutually disjoint.
\end{proposition}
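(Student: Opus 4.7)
The approach is a case analysis on the geometry of $u_{\rm p}=\rsp[u_\ell,u_r]$ near $\xi=0$: I read off the traces $u_{\rm p}^\pm$ in each configuration and check when $(u_{\rm p}^-, u_{\rm p}^+)\in\mathsf{O}$, which partitions $\mathsf{O}$ as claimed. Since $\mathsf{A}_\mathsf{N}\subset\mathsf{A}$ is disjoint from $\mathsf{O}$, every $(u_\ell,u_r)\in\mathsf{O}$ has $\tilde\nu\ne0$; and the symmetry $(x,q)\mapsto(-x,-q)$ of \eqref{eq:systemq}, which swaps the two families, the left/right roles and the signs of $\nu_\ell,\nu_r,\tilde\nu$, lets me restrict to $\tilde\nu>0$; the mirror yields $\mathsf{O}_\mathsf{O}^2$, $\mathsf{O}_\mathsf{O}^4$ and $\mathsf{O}_\mathsf{A}^2$. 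Under $\tilde\nu>0$, Lax's entropy condition for a 2-shock and $\lambda_2(\tilde u)=a(\tilde\nu+1)>0$ along a 2-rarefaction place the whole 2-wave in $\{\xi>0\}$, so $u_{\rm p}^\pm$ are determined by the 1-wave alone.

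If the 1-wave is a rarefaction ($\tilde\nu>\max\{0,\nu_\ell\}$), continuity gives $u_{\rm p}^-=u_{\rm p}^+=:u^\star$, and three geometric subcases arise from the position of the sonic line $\nu=1$: $u^\star=u_\ell$ when $\nu_\ell\ge1$; $u^\star=\tilde u$ when $\tilde\nu\le1$; and $u^\star$ is the sonic state on $\mathcal{R}_1^{u_\ell}$ (with $\mu^\star=\mu_\ell+\nu_\ell-1$, $\nu^\star=1$) otherwise. Plugging $u^\star$ into \eqref{eq:hat0}--\eqref{eq:check0} and using the 1-Riemann invariant $\mu+\nu=\mu_\ell+\nu_\ell$ along the rarefaction collapses the three subcases into the single formula
\[
\hat p(u^\star)-\check p(u^\star)=e^{\mu_\ell+\nu_\ell}\,\Phi\bigl(-\max\{1,\nu_\ell\}\cdot\min\{1,\tilde\nu\}\bigr),
\]
which is positive; requiring it to exceed $M$ yields $\mathsf{O}_\mathsf{O}^1$, the reverse inequality giving $\mathsf{O}_\mathsf{A}^1$.

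If the 1-wave is a shock ($0<\tilde\nu\le\nu_\ell$), its constant speed is $\sigma_1=v_\ell-a\sqrt{\tilde\rho/\rho_\ell}$, and $u_{\rm p}^\pm$ equals $u_\ell$ or $\tilde u$ according to $\operatorname{sign}(\sigma_1)$ (with $u_{\rm p}^-=u_\ell$, $u_{\rm p}^+=\tilde u$ on the exceptional locus $\sigma_1=0$). The key claim, responsible for the absence of any further inequality in $\mathsf{O}_\mathsf{O}^3$, is that $(u_{\rm p}^-,u_{\rm p}^+)\in\mathsf{O}$ follows automatically from $(u_\ell,u_r)\in\mathsf{O}$. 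I would prove this in three steps: (a) $\tilde q>0$ forces $\hat p_\ell>\check p_r$, obtained by viewing $q\mapsto\hat p(q,u_\ell)-\check p(q,u_r)$ on the curves $\mathcal{FL}_1^{u_\ell}$ and $\mathcal{BL}_2^{u_r}$, which vanishes at $q=\tilde q$ and has the expected sign at $q=0$ by the monotonicity of the Lax curves from Proposition~\ref{prop:lax}; (b) $\hat p(\tilde u)\ge\hat p(u_\ell)$ whenever $\tilde u\in\mathcal{S}_1^{u_\ell}$ has $\tilde\nu\ge0$, by parametrizing $\tau\mapsto u(\tau)$ along the shock and noting that $\hat p$ takes the \emph{same} value at the two endpoints $\tau=0$ and $\tau_{\max}$ (the latter being where $\nu(\tau_{\max})=0$, so $u(\tau_{\max})=\hat u_\ell$), while the derivative of $\hat\mu$ computed from \eqref{eq:hat0} is positive at $\tau=0$ and negative at $\tau_{\max}$, giving unimodality on $[0,\tau_{\max}]$; (c) the symmetric $\check p(\tilde u)\le\check p_r$ along the 2-wave from $u_r$ to $\tilde u$ (equality via the 2-Riemann invariant $z$ in the rarefaction subcase, the analogue of (b) in the shock subcase). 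Putting (a)--(c) together in each of the possibilities for $u_{\rm p}^\pm$ yields $\hat p(u_{\rm p}^-)-\check p(u_{\rm p}^+)\ge\hat p_\ell-\check p_r>M$.

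Mutual disjointness of the six subsets is then immediate by inspecting the sign patterns among $\tilde\nu,\nu_\ell,\nu_r,\pm1$ and the $M$-inequalities. The main technical difficulty is item (b) above: unlike along rarefactions, where shared Riemann invariants make $\hat p$ or $\check p$ automatically constant, the 1-shock curves through $u_\ell$ and $\tilde u$ are genuinely different, so the bound must come from the explicit derivative analysis sketched above and from the properties of $\Xi$, $\Xi'$, $\Xi''$, $\Xi'''$ collected after its definition.
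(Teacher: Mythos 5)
Your strategy is essentially the paper's: exclude $\tilde q=0$ since $\mathsf{A}_\mathsf{N}\cap\mathsf{O}=\emptyset$, reduce to $\tilde\nu>0$ by the reflection $(x,q)\mapsto(-x,-q)$, observe that the whole $2$-wave has positive speed, and then split according to whether the $1$-wave is a rarefaction ($\tilde\nu>\max\{0,\nu_\ell\}$) or a shock ($0<\tilde\nu\le\nu_\ell$). Your identification of the traces in the rarefaction case ($u_\ell$, the sonic point $\bar u_\ell$, or $\tilde u$, according to $\nu_\ell\ge1$, $\nu_\ell<1<\tilde\nu$, $\tilde\nu\le1$) and the collapse of the three subcases into $e^{\mu_\ell+\nu_\ell}\,\Phi\left(-\max\{1,\nu_\ell\}\cdot\min\{1,\tilde\nu\}\right)$ via \eqref{eq:hat0},\eqref{eq:check0} and $\tilde\mu+\tilde\nu=\mu_\ell+\nu_\ell$ reproduce exactly the paper's computation, and the disjointness argument is fine.

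In the shock case, however, two steps of your plan are genuinely incomplete. First, your ingredients (a)--(c) do not cover the positive-speed subcase $u_{\rm p}^-=u_{\rm p}^+=u_\ell$: there one must show $\hat p_\ell-\check p_\ell>M$, and from $\hat p_\ell-\check p_r>M$ this requires $\check p_\ell\le\check p_r$, which follows neither from (a) nor from (b)--(c), since those only compare data at $\tilde u$ with data at $u_\ell$, resp.\ $u_r$. You need the extra monotonicity of $\check\mu$ along the $1$-shock branch (for $\nu>0$, $\check\mu=\mu-\nu_\ell-\Xi(\mu-\mu_\ell)$ has derivative $1-\Xi'>0$, so $\check\mu_\ell<\check\mu(\tilde u)$), which combined with (c) gives $\check p_\ell\le\check p_r$; this is precisely the inequality $\check\mu_\ell<\check\mu_r$ the paper uses in its case (C). Second, in (b) the stated inference is a non sequitur: equal values of $\hat\mu$ at the two endpoints together with a positive derivative at $\tau=0$ and a negative one at $\tau_{\max}$ do not imply unimodality -- the function could dip below the common endpoint value in between. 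The claim is true, but you must show the derivative changes sign only once: with $h(\mu)\doteq\mu-\Xi^{-1}(\nu_\ell+\Xi(\mu-\mu_\ell))$ one finds $h'(\mu)\ge0$ if and only if $\nu_\ell+2\,\Xi(\mu-\mu_\ell)\ge0$, i.e.\ $\nu_\ell\ge4\sinh((\mu-\mu_\ell)/2)$, a condition that fails monotonically in $\mu$; hence $h$ is increasing-then-decreasing on $[\mu_\ell,\hat\mu_\ell]$ and $\hat\mu(\tilde u)\ge\hat\mu_\ell$ follows (this is the inequality the paper dismisses as a ``simple geometric argument''). A minor further slip: in (a) the map $q\mapsto\hat p(q,u_\ell)-\check p(q,u_r)$ need not vanish at $q=\tilde q$ (if $\tilde u$ lies on the rising, supersonic part of $\mathcal{FL}_1^{u_\ell}$ then $\hat u(\tilde q,u_\ell)\ne\tilde u$), but it is nonnegative there and strictly decreasing, so your conclusion $\hat p_\ell>\check p_r$ survives; alternatively, in $(\mu,\nu)$-coordinates both curves are strictly monotone and $\tilde\nu>0$ gives directly $\check\mu_r<\tilde\mu<\hat\mu_\ell$. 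With these repairs your proof is complete and coincides in substance with the paper's two lemmas.
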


In general it is difficult to characterize $\mathsf{CH}$ in a simple way because an explicit expression for $\tilde{u}$ is not available.
We introduce in the next corollary a subset of $\mathsf{CH}$ that partially answers to this issue.

\begin{corollary}\label{cor:C1}
We have
\[
\mathsf{CH}' \doteq \left\{ (u_\ell,u_r) \in \Omega^2 \colon
\nu_r < 0 < \nu_\ell,\ 
\min\left\{e^{\mu_\ell+\nu_\ell} \, \Phi(-\nu_\ell), e^{\mu_r-\nu_r} \, \Phi(\nu_r) \right\} > M
\right\}
\subseteq \mathsf{CH},
\]
see \figurename~\ref{fig:C1}.
As a consequence, $\mathsf{CH}' \cap \mathsf{O} \subseteq \mathsf{O}_{\mathsf{O}}$.
\end{corollary}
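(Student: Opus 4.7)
The plan is to exploit part~\ref{I:pro:4.1} of Theorem~\ref{teo}, namely $\mathsf{CH}=\mathsf{A}\cup\mathsf{O}_{\mathsf{O}}$, together with the characterization $\mathsf{O}_{\mathsf{O}}=\bigcup_{i=1}^{4}\mathsf{O}_{\mathsf{O}}^{i}$ provided by Proposition~\ref{prop:coo}. It suffices to prove $\mathsf{CH}'\cap\mathsf{O}\subseteq\mathsf{O}_{\mathsf{O}}$; the stated inclusion $\mathsf{CH}'\subseteq\mathsf{CH}$ and its consequence follow immediately since $\mathsf{A}\subseteq\mathsf{CH}$ and $\mathsf{A}\cap\mathsf{O}=\emptyset$.

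Fix $(u_\ell,u_r)\in\mathsf{CH}'\cap\mathsf{O}$; by definition of $\mathsf{CH}'$ we have $\nu_\ell>0>\nu_r$. Since $(u_\ell,u_r)\in\mathsf{O}$, the pair does not belong to $\mathsf{A}_\mathsf{N}$, and because $\mathsf{A}_\mathsf{N}=\{\tilde{q}=0\}$ we infer $\tilde{\nu}\neq0$. This yields four exhaustive subcases according to the sign and size of $\tilde{\nu}$: the ranges $0<\tilde{\nu}\le\nu_\ell$ and $\nu_r\le\tilde{\nu}<0$ place $(u_\ell,u_r)$ directly in $\mathsf{O}_{\mathsf{O}}^{3}$ and $\mathsf{O}_{\mathsf{O}}^{4}$ respectively, by the very definitions recorded in Proposition~\ref{prop:coo}, closing those cases with no further work.

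The two remaining subcases carry the analytic content. For $\tilde{\nu}>\nu_\ell>0$, set $\alpha\doteq-\max\{1,\nu_\ell\}\min\{1,\tilde{\nu}\}$. I would verify $\Phi(\alpha)\ge\Phi(-\nu_\ell)$ by splitting on whether $\nu_\ell\ge1$ or $\nu_\ell<1$: in the former, $\tilde{\nu}>\nu_\ell\ge1$ gives $\min\{1,\tilde{\nu}\}=1$ and hence $\alpha=-\nu_\ell$, so the comparison holds with equality; in the latter, $\alpha=-\min\{1,\tilde{\nu}\}\in(-1,-\nu_\ell)$, and since $\nu_c<-1$ both $\alpha$ and $-\nu_\ell$ lie in $[\nu_c,0)$, where $\Phi$ is strictly decreasing by Figure~\ref{fig:Phiii}, yielding $\Phi(\alpha)>\Phi(-\nu_\ell)$. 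Multiplying by $e^{\mu_\ell+\nu_\ell}>0$ and invoking the left-side bound $e^{\mu_\ell+\nu_\ell}\Phi(-\nu_\ell)>M$ built into $\mathsf{CH}'$ produces the defining inequality of $\mathsf{O}_{\mathsf{O}}^{1}$. The subcase $\tilde{\nu}<\nu_r<0$ is completely symmetric: using the right-side bound $e^{\mu_r-\nu_r}\Phi(\nu_r)>M$ from $\mathsf{CH}'$, the analogous argument lands the point in $\mathsf{O}_{\mathsf{O}}^{2}$.

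The one delicate step is keeping the arguments of $\Phi$ inside the monotonicity interval $[\nu_c,\infty)$. This is precisely the role of the clipping factor $\min\{1,\tilde{\nu}\}$: it confines $\alpha$ to $[-1,0)\subset[\nu_c,0)$ whenever $\nu_\ell<1$, while in the complementary regime $\alpha$ simply coincides with $-\nu_\ell$ and the monotonicity of $\Phi$ is not invoked at all. Everything else is straightforward bookkeeping driven by $\nu_\ell>0>\nu_r$ and the location of $\tilde{\nu}$.
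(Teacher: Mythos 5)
Your proof is correct and follows essentially the same route as the paper's: both reduce to Theorem~\ref{teo}~\ref{I:pro:4.1} and Proposition~\ref{prop:coo}, and both hinge on the comparison $\Phi\left(-\max\{1,\nu_\ell\}\min\{1,\tilde{\nu}\}\right)\ge\Phi(-\nu_\ell)$ via the monotonicity of $\Phi$ on $[\nu_c,\infty)$ with $\nu_c<-1$; the paper merely phrases it contrapositively, showing $\mathsf{CH}'\cap\mathsf{O}_\mathsf{A}^j=\emptyset$ by contradiction, which given the partition $\mathsf{O}=\mathsf{O}_\mathsf{O}\cup\mathsf{O}_\mathsf{A}$ is the same argument. (Only a cosmetic slip: when $\nu_\ell<1$ your $\alpha$ lies in $[-1,-\nu_\ell)$, not the open interval, as you yourself note later.)
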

\begin{figure}[!htbp]
      \centering
      \begin{psfrags}
      \psfrag{L}[c,c]{$u_\ell$}
      \psfrag{R}[c,c]{$u_r$}
      \psfrag{M}[c,c]{$\mu$}
      \psfrag{N}[c,c]{$\nu$}
        \includegraphics[width=.25\textwidth]{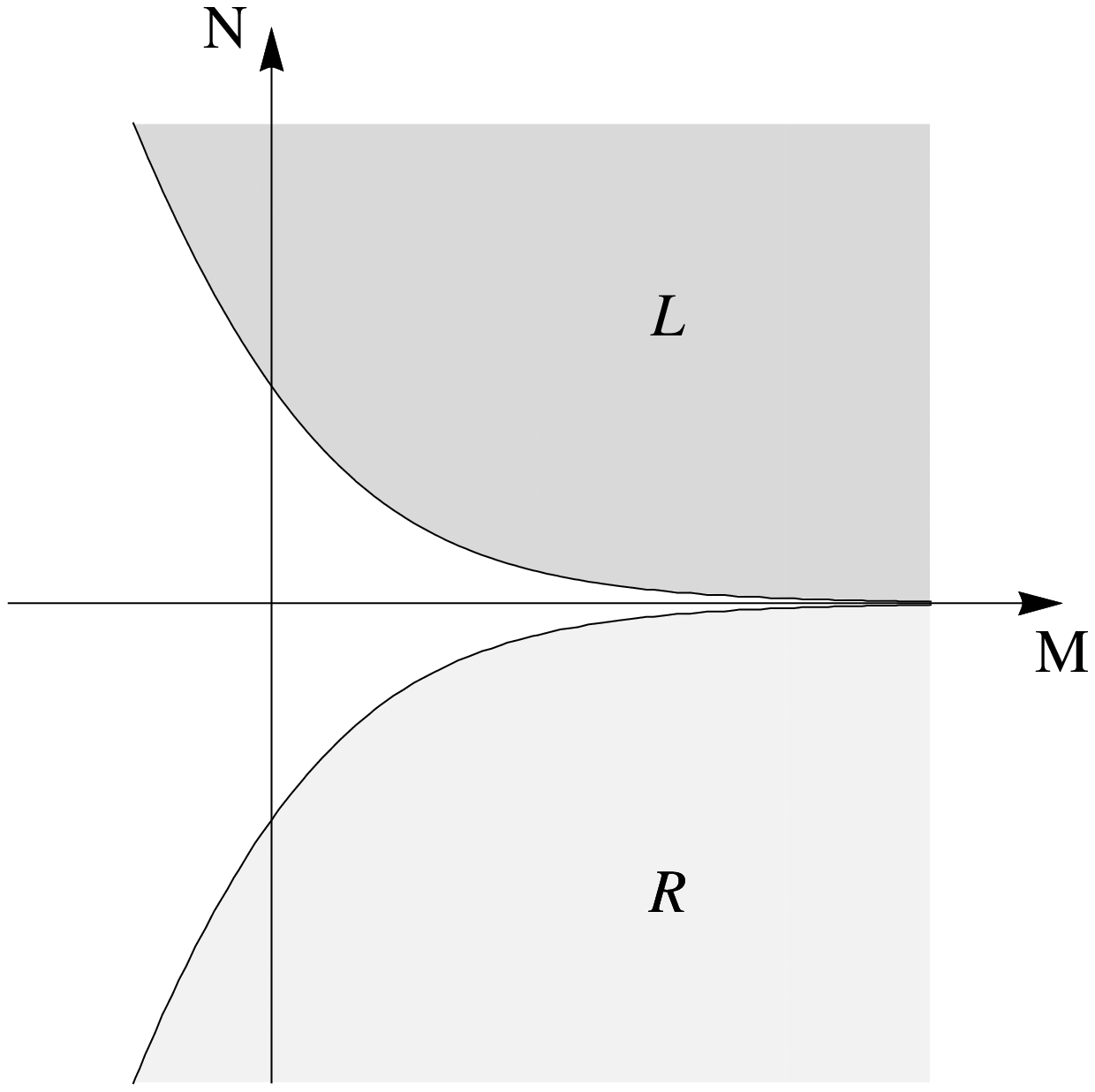}
      \end{psfrags}
      \caption{The region $\mathsf{CH}'$ defined in Corollary \ref{cor:C1}.}
\label{fig:C1}
\end{figure}
\begin{proof}
Clearly $\mathsf{CH}' = \mathsf{CH}'_1 \cap \mathsf{CH}'_2$, where
\begin{align*}
\mathsf{CH}'_1 &\doteq\left\{ (u_\ell,u_r) \in \Omega^2 \colon \nu_\ell > 0,\ e^{\mu_\ell+\nu_\ell} \, \Phi(-\nu_\ell) > M \right\},
\\
\mathsf{CH}'_2 &\doteq \left\{ (u_\ell,u_r) \in \Omega^2 \colon \nu_r < 0,\ e^{\mu_r-\nu_r} \, \Phi(\nu_r) > M \right\}.
\end{align*}
We claim that $\mathsf{CH}'_j \cap \mathsf{O}_\mathsf{A}^j = \emptyset$, $j\in\{1,2\}$.
To prove the case $j=1$ (the other case is analogous), let $(u_\ell,u_r) \in \mathsf{CH}'_1 \cap \mathsf{O}_\mathsf{A}^1$; then $\tilde{\nu} > \nu_\ell = \max\{0,\nu_\ell\}$ and so
\begin{align*}
e^{-\mu_\ell-\nu_\ell} \, M &\ge
\Phi\left(-\max\{1,\nu_\ell\} \cdot \min\{1,\tilde{\nu}\}\right) = 
\begin{cases}
\Phi\left(-\nu_\ell\right)&\text{if }\tilde{\nu}>\nu_\ell \ge 1\\
\Phi\left(-1\right)&\text{if }\tilde{\nu}\ge1>\nu_\ell\\
\Phi\left(-\tilde{\nu}\right)&\text{if }1>\tilde{\nu}>\nu_\ell
\end{cases}
\\&\ge \Phi\left(-\nu_\ell\right) > e^{-\mu_\ell-\nu_\ell} \, M,
\end{align*}
see \figurename~\ref{fig:Phiii}, a contradiction.
As a consequence $\mathsf{CH}' \cap \mathsf{O}_\mathsf{A} = \emptyset$ because $\mathsf{O}_\mathsf{A} = \mathsf{O}_\mathsf{A}^1 \cup \mathsf{O}_\mathsf{A}^2$ by Proposition~\ref{prop:coo}, whence $\mathsf{CH}' \subseteq \mathsf{CH}$ by Theorem~\ref{teo}, \ref{I:pro:4.1}.
\end{proof}

In the following corollary we prove that any consistent point is also coherent.

\begin{corollary}\label{cor}
We have $\mathsf{CN}\subset\mathsf{CH}$.
\end{corollary}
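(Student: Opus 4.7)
My plan is to exploit the decomposition $\mathsf{CN} = \mathsf{CN}_\mathsf{A} \cup \mathsf{CN}_\mathsf{O}$ provided by Theorem~\ref{teo}, \ref{I:prop:consistrsv}. The first piece is trivial: $\mathsf{CN}_\mathsf{A} \subseteq \mathsf{A} \subseteq \mathsf{A} \cup \mathsf{O}_\mathsf{O} = \mathsf{CH}$ by Theorem~\ref{teo}, \ref{I:pro:4.1}. The real work therefore concentrates on proving $\mathsf{CN}_\mathsf{O} \subseteq \mathsf{O}_\mathsf{O}$, i.e., that $(u_{\rm p}^-, u_{\rm p}^+) \in \mathsf{O}$ for every $(u_\ell, u_r) \in \mathsf{CN}_\mathsf{O}$.

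Fix such a pair. Since $\mathsf{CN}_\mathsf{O} \subseteq \mathsf{O}$ and $\mathsf{A}_\mathsf{N} = \{\tilde q = 0\} \subseteq \mathsf{A}$ is disjoint from $\mathsf{O}$, we have $\tilde q \ne 0$. The strategy is to enumerate the possible values of $(u_{\rm p}^-, u_{\rm p}^+)$ by the position of $\xi = 0$ in the wave structure of $u_{\rm p}$. In the \emph{outside} subcases, $u_{\rm p}^- = u_{\rm p}^+ \in \{u_\ell, u_r\}$, the flow at that endpoint must be strictly supersonic, so the corresponding momentum is nonzero, forcing $(u_\ell, u_\ell) \notin \mathsf{A}_\mathsf{N}$ (resp.\ $(u_r, u_r) \notin \mathsf{A}_\mathsf{N}$); combined with the assumption $(u_\ell, u_\ell), (u_r, u_r) \in \mathsf{A}_\mathsf{I}^{\scriptscriptstyle\complement}$ built into $\mathsf{CN}_\mathsf{O}$, this gives the claim. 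In the \emph{stationary-shock} subcases $(u_{\rm p}^-, u_{\rm p}^+) \in \{(u_\ell, \tilde u), (\tilde u, u_r)\}$, the first Rankine--Hugoniot condition \eqref{eq:RH1} with $\dot\gamma = 0$ forces $\tilde q = q_\ell$ (resp.\ $\tilde q = q_r$), and a short manipulation of the Lax curve formula $\mathcal{S}_1$ (resp.\ $\mathcal{S}_2$) yields $|q_\ell| = a\sqrt{\rho_\ell \tilde\rho} > 0$, so these pairs lie in $\mathsf{A}_\mathsf{I}^{\scriptscriptstyle\complement} \setminus \mathsf{A}_\mathsf{N} = \mathsf{O}$ as required.

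The remaining subcases correspond to $u_{\rm p}^- = u_{\rm p}^+ = u^*$ with $u^*$ either equal to $\tilde u$ (when $\xi = 0$ lies between the two characteristic families, so $|v_{\tilde u}| < a$) or an interior point of a rarefaction fan. The key tool here is Proposition~\ref{prop:lax}, \ref{L1}: if $u^* \in \mathcal{R}_1^{u_\ell}$ and $v^* \le 0$, then $\mathcal{R}_1^{u^*} = \mathcal{R}_1^{u_\ell}$, and the unique $q=0$ point on this shared curve coincides with both $\hat u(u^*)$ and $\hat u(u_\ell)$. Consequently $\hat p^* = \hat p_\ell$, and $(u_\ell, u^*) \in \mathsf{A}_\mathsf{I}^{\scriptscriptstyle\complement}$ by the consistency characterization in Proposition~\ref{pro:consist}. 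Since $\tilde u(u_\ell, u^*) = u^*$ and $q^* \ne 0$ (by the rarefaction clause in $\mathsf{CN}_\mathsf{O}$, or by $\tilde q \ne 0$ when $u^* = \tilde u$), one actually has $(u_\ell, u^*) \in \mathsf{O}$, hence $|\check p^* - \hat p^*| = |\check p^* - \hat p_\ell| > M$, i.e.\ $(u^*, u^*) \in \mathsf{O}$. The symmetric statement using $\mathcal{R}_2^{u_r}$ and $\check u$ treats the case $u^* \in \mathcal{R}_2^{u_r}$ with $v^* \ge 0$.

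The main obstacle will be the subcase in which $u^* = \tilde u$ is sandwiched between two shocks, so that neither Lax-curve identification above applies. For this I plan to fall back on the explicit formula $|\check p(u) - \hat p(u)| = e^{w(u)}\Phi(-\nu)$ for $\nu > 0$ (with the symmetric formula for $\nu < 0$) already appearing in the proof of Corollary~\ref{cor:C1}, together with the monotonicity of $\Phi$ on $[\nu_c, \infty)$, the subsonic bound $|\nu_{\tilde u}| < 1$, and the fact that in this subcase all four pairs $(u_\ell, u_\ell), (u_r, u_r), (u_\ell, \tilde u), (\tilde u, u_r)$ lie in $\mathsf{O}$ (since $q_\ell, q_r, \tilde q$ are all nonzero). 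A comparison argument in the spirit of the proof of Corollary~\ref{cor:C1} should then yield $|\check p(\tilde u) - \hat p(\tilde u)| > M$ and close the argument.
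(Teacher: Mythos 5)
Your reduction ($\mathsf{CN}_\mathsf{A}\subseteq\mathsf{A}\subseteq\mathsf{CH}$, then $\mathsf{CN}_\mathsf{O}\subseteq\mathsf{O}_\mathsf{O}$ via a case analysis on $(u_{\rm p}^-,u_{\rm p}^+)$) is exactly the paper's skeleton, and the stationary-shock case and the two ``identification'' cases (where $\hat p(u^*)=\hat p_\ell$ along a shared $\mathcal{R}_1$, resp.\ $\check p(u^*)=\check p_r$ along a shared $\mathcal{R}_2$) are correct and even slightly cleaner than the paper's treatment of those subcases. But there are two genuine problems. First, a repairable one: in the subcase $u_{\rm p}^\pm=u_\ell$ the state need \emph{not} be strictly supersonic (take $u_\ell=\tilde u$ subsonic with the $2$-wave of positive speed, or $v_\ell=a$ with the fan edge at $\xi=0$); what you actually need is only $q_\ell\ne0$, and this follows because $q_\ell=0$ would force the $1$-wave to be trivial, hence $\tilde u=u_\ell$ and $\tilde q=0$, contradicting $(u_\ell,u_r)\in\mathsf{O}$ --- which is how the paper argues.

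The serious gap is in your accounting of the remaining subcases and in the fallback you propose for them. The set of configurations not covered by either identification is strictly larger than ``$\tilde u$ sandwiched between two shocks'': it also contains the sonic states $u^*=\bar u_\ell$ (interior of a $1$-fan, where $v^*=a>0$ and $u^*\notin\mathcal{R}_2^{u_r}$), $u^*=\underline u_r$ (interior of a $2$-fan, $v^*=-a$), and $\tilde u$ flanked by a $1$-rarefaction and a $2$-shock with $\tilde\nu>0$ (and the mirror case), since there the sign of $v^*$ kills one identification and $\tilde u\notin\mathcal{R}_2^{u_r}$ (resp.\ $\notin\mathcal{R}_1^{u_\ell}$) kills the other. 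Worse, the comparison you plan ``in the spirit of Corollary~\ref{cor:C1}'' rests on the invariance of $w$ along the incoming wave, so it cannot close precisely the two-shock case you flag: across an admissible $1$-shock one has $\tilde w<w_\ell$ and $-\tilde\nu>-\nu_\ell$, so both factors in $e^{w}\Phi(-\nu)$ move the wrong way and $(u_\ell,u_\ell)\in\mathsf{O}$ gives no lower bound on $e^{\tilde w}\Phi(-\tilde\nu)$. The paper closes these cases by a different mechanism: when $q_\ell\ge\tilde q$ it uses $(u_\ell,\tilde u)\in\mathsf{A}_\mathsf{I}^{\scriptscriptstyle\complement}$ together with $\tilde q(u_\ell,\tilde u)=\tilde q\ne0$ to get $(u_\ell,\tilde u)\in\mathsf{O}$, plus the monotonicity $\hat p(\tilde u)\ge\hat p_\ell$ along $\mathcal{FL}_1^{u_\ell}$ (symmetrically with $\check p$ on the right), and reserves the $\Phi$-monotonicity/$w$-invariance computation for the rarefaction situations $q_\ell<\tilde q$ and for the sonic point $\bar u_\ell$. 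As written, your proposal leaves exactly these decisive subcases unproved.
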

\begin{proof}
It is sufficient to prove that $\mathsf{CN}_\mathsf{O} \subset \mathsf{O}_\mathsf{O}$ because by Theorem~\ref{teo}, \ref{I:pro:4.1},\ref{I:prop:consistrsv}, we have
\begin{align*}
&\mathsf{CN}\cap\mathsf{A}= \mathsf{CN}_\mathsf{A} \subset \mathsf{CH}\cap\mathsf{A} = \mathsf{A},&
&\mathsf{CH}\cap\mathsf{O} = \mathsf{O}_\mathsf{O},&
&\mathsf{CN}\cap\mathsf{O}= \mathsf{CN}_\mathsf{O}.
\end{align*}
Let $(u_\ell,u_r) \in \mathsf{CN}_\mathsf{O}$.
Clearly $u_{\rm v} \equiv u_{\rm p}$ and $\tilde{q}\ne0$.
We have to prove that $(u_{\rm p}^-,u_{\rm p}^+) \in \mathsf{O}$, namely $|\hat{p}(u_{\rm p}^-) - \check{p}(u_{\rm p}^+)| >M$.
\begin{itemize}[leftmargin=*]\setlength{\itemsep}{0cm}\setlength\itemsep{0em}%
\item
Assume that $u_{\rm p}^\pm = u_\ell$; the case $u_{\rm p}^\pm = u_r$ is analogous.
It is sufficient to prove that $q_\ell\ne0$ because we know that $(u_\ell,u_\ell) \in \mathsf{A}_\mathsf{I}^{\scriptscriptstyle\complement} = \mathsf{O} \cup \mathsf{A}_\mathsf{N}$.
If by contradiction $q_\ell=0$, then $\tilde{u} = u_\ell$ because $u_{\rm p}^\pm = u_\ell$.
As a consequence $\tilde{q}=0$, namely $(u_\ell,u_r) \in \mathsf{A}_\mathsf{N}$, a contradiction.
\item
Assume that $u_{\rm p}^\pm = \tilde{u}$.
Consider the case $\tilde{q}>0$; the case $\tilde{q}<0$ is analogous.
Since $q_{\rm p}\ne0$ along any rarefaction, we have $q_\ell>0$.
\begin{itemize}[leftmargin=*]\setlength{\itemsep}{0cm}\setlength\itemsep{0em}%
\item
If $q_\ell\ge\tilde{q}$, then $(u_\ell,\tilde{u}) \in \mathsf{O}$ because $\tilde{q}(u_\ell,\tilde{u}) = \tilde{q} \ne0$; hence $\hat{p}(u_{\rm p}^-) - \check{p}(u_{\rm p}^+) = \hat{p}(\tilde{u}) - \check{p}(\tilde{u}) > \hat{p}_\ell - \check{p}(\tilde{u}) > M$.
\item
If $q_\ell<\tilde{q}$, then $(u_\ell,u_\ell) \in \mathsf{O}$ because $q_\ell >0$; hence by \eqref{eq:hat0},\eqref{eq:check0}
\begin{align*}
\hat{p}(u_{\rm p}^-) - \check{p}(u_{\rm p}^+) = 
\hat{p}(\tilde{u}) - \check{p}(\tilde{u}) = 
a^2 \left(e^{\hat{\mu}(\tilde{u})}-e^{\check{\mu}(\tilde{u})}\right)
\\= 
e^{\mu_\ell + \nu_\ell} \Phi(-\tilde{\nu}) >
e^{\mu_\ell + \nu_\ell} \Phi(-\nu_\ell) =
\hat{p}_\ell - \check{p}_\ell &> M,
\end{align*}
because $\nu_\ell<\tilde{\nu} \le 1$ and $\tilde{\mu}+\tilde{\nu} = \mu_\ell + \nu_\ell$.
\end{itemize}
\item
Assume that $u_{\rm p}^\pm = \bar{u}_\ell$; the case $u_{\rm p}^\pm = \underline{u}_r$ is analogous.
Since $q_{\rm p}\ne0$ along any rarefaction, we have $q_\ell>0$.
Therefore $(u_\ell,u_\ell) \in \mathsf{O}$ and by \eqref{eq:hat0},\eqref{eq:check0}
\[
\hat{p}(u_{\rm p}^-) - \check{p}(u_{\rm p}^+) = 
\hat{p}(\bar{u}_\ell) - \check{p}(\bar{u}_\ell) = 
a^2 \left(e^{\hat{\mu}(\bar{u}_\ell)}-e^{\check{\mu}(\bar{u}_\ell)}\right) = 
e^{\mu_\ell + \nu_\ell} \Phi(-1) >
e^{\mu_\ell + \nu_\ell} \Phi(-\nu_\ell) =
\hat{p}_\ell - \check{p}_\ell > M,
\]
because $\nu_\ell<\bar{\nu}_\ell = 1$ and $\bar{\mu}_\ell+1 = \mu_\ell + \nu_\ell$.
\item
Assume that $u_{\rm p}^- = u_\ell$ and $u_{\rm p}^+ = \tilde{u}$; the case $u_{\rm p}^- = \tilde{u}$ and $u_{\rm p}^+ = u_r$ is analogous.
Since $u_{\rm p}$ cannot perform a stationary shock between states with zero flow by \eqref{eq:RH2}, we have that $q_\ell = \tilde{q} > 0$.
Therefore $(u_{\rm p}^-,u_{\rm p}^+) = (u_\ell,\tilde{u}) \in \mathsf{O}$ because $\tilde{q}(u_\ell,\tilde{u}) = \tilde{q} \ne0$.\qedhere
\end{itemize}
\end{proof}

We now deal with invariant domains.
We first state a preliminary result.
\begin{proposition}\label{pro:andrea}
Let $\mathsf{\Delta} \doteq \{(u,u)\colon u\in\Omega\}$. 
Then $\mathsf{\Delta} \cap \mathsf{CH} = \mathsf{\Delta} $ and $\mathsf{\Delta} \cap \mathsf{CN} = \mathsf{\Delta}\cap \mathsf{A}_\mathsf{I}^{\scriptscriptstyle\complement}$.
\end{proposition}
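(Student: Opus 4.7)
The plan is to treat the two set identities separately, exploiting throughout the key simplification that for $u_\ell=u_r=u_0$ the Riemann problem is trivial: $u_{\rm p}\equiv u_0$, and whenever $(u_0,u_0)\in\mathsf{A}_\mathsf{I}^{\scriptscriptstyle\complement}$ also $u_{\rm v}\equiv u_0$. In particular the traces $u_{\rm v}^\pm$ and the intermediate state $\tilde{u}$ all coincide with $u_0$, and no rarefaction ever appears in the solution.

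For the first identity $\mathsf{\Delta}\cap\mathsf{CH}=\mathsf{\Delta}$, I would invoke the decomposition $\mathsf{CH}=\mathsf{A}\cup\mathsf{O}_\mathsf{O}$ from Theorem~\ref{teo}, \ref{I:pro:4.1}. Fix $(u_0,u_0)\in\mathsf{\Delta}$: if $(u_0,u_0)\in\mathsf{A}$ there is nothing to prove, while if $(u_0,u_0)\in\mathsf{O}$, the identity $u_{\rm p}\equiv u_0$ yields $(u_{\rm p}^-,u_{\rm p}^+)=(u_0,u_0)\in\mathsf{O}$, so $(u_0,u_0)\in\mathsf{O}_\mathsf{O}$ by definition.

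For the second identity, the inclusion $\subseteq$ is immediate from the corollary to Proposition~\ref{pro:consist}, which excludes consistence at every point of $(\{u_0\}\times\Omega)\cup(\Omega\times\{u_0\})$ whenever $(u_0,u_0)\in\mathsf{A}_\mathsf{I}$, and in particular at $(u_0,u_0)$ itself. For the reverse inclusion I would fix $(u_0,u_0)\in\mathsf{A}_\mathsf{I}^{\scriptscriptstyle\complement}=\mathsf{O}\cup\mathsf{A}_\mathsf{N}$ and split into cases. If $(u_0,u_0)\in\mathsf{O}$, the four pair memberships listed in the definition of $\mathsf{CN}_\mathsf{O}$ all collapse to $(u_0,u_0)\in\mathsf{A}_\mathsf{I}^{\scriptscriptstyle\complement}$ (since $\tilde{u}=u_0$), and the condition $q_{\rm p}\neq 0$ along rarefactions is vacuous. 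If instead $(u_0,u_0)\in\mathsf{A}_\mathsf{N}$, the characterization $\mathsf{A}_\mathsf{N}=\{(u_\ell,u_r)\in\Omega^2\colon\tilde{q}=0\}$ together with $\tilde{u}(u_0,u_0)=u_0$ forces $q_0=0$, so $q_\ell\ge 0\ge q_r$ is automatic and the two diagonal memberships demanded by $\mathsf{CN}_\mathsf{A}$ coincide with the assumption.

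The only real point of care is the reverse inclusion, where one must select the correct subset $\mathsf{CN}_\mathsf{O}$ or $\mathsf{CN}_\mathsf{A}$ according to whether $(u_0,u_0)$ lies in $\mathsf{O}$ or in $\mathsf{A}_\mathsf{N}$; both verifications then collapse to triviality precisely because $u_{\rm v}$ is constant and $\tilde{u}=u_0$.
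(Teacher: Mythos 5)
Your proposal is correct and takes essentially the same approach as the paper: both reduce the claim to the decompositions $\mathsf{CH}=\mathsf{A}\cup\mathsf{O}_\mathsf{O}$ and $\mathsf{CN}=\mathsf{CN}_\mathsf{O}\cup\mathsf{CN}_\mathsf{A}$ of Theorem~\ref{teo} and exploit that for diagonal data $u_{\rm p}\equiv u_0$, $\tilde{u}=u_0$, so all the defining conditions collapse to membership of $(u_0,u_0)$ in $\mathsf{O}$ or $\mathsf{A}_\mathsf{N}$. The only cosmetic difference is that you get the inclusion $\mathsf{\Delta}\cap\mathsf{CN}\subseteq\mathsf{\Delta}\cap\mathsf{A}_\mathsf{I}^{\scriptscriptstyle\complement}$ from the corollary to Proposition~\ref{pro:consist}, whereas the paper reads it off directly from the definitions of $\mathsf{CN}_\mathsf{O}$ and $\mathsf{CN}_\mathsf{A}$; both are valid and non-circular.
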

\begin{proof}
By Theorem~\ref{teo}, \ref{I:pro:4.1},\ref{I:prop:consistrsv}, it is sufficient to prove that
\begin{align*}
&\mathsf{\Delta} \cap \mathsf{O}_{\mathsf{O}} = \mathsf{\Delta} \cap \mathsf{O},&
&\mathsf{\Delta} \cap \mathsf{CN}_\mathsf{O} = \mathsf{\Delta} \cap \mathsf{O},&
&\mathsf{\Delta} \cap \mathsf{CN}_\mathsf{A} = \mathsf{\Delta} \cap \mathsf{A}_\mathsf{N} = \{(u,u)\in\Omega^2\colon q=0\}.
\end{align*}
If $(u,u) \in \mathsf{O}$, then $\rsv[u,u] \equiv \rsp[u,u] \equiv u$ and clearly $(u,u) \in \mathsf{O}_\mathsf{O} \cap \mathsf{CN}_\mathsf{O}$; hence $\mathsf{\Delta} \cap \mathsf{O} \subseteq \mathsf{\Delta} \cap \mathsf{O}_\mathsf{O} \cap \mathsf{CN}_\mathsf{O}$.
Clearly $\mathsf{O}_{\mathsf{O}} \cup \mathsf{CN}_\mathsf{O} \subset \mathsf{O}$, which implies $\mathsf{\Delta} \cap \mathsf{O} \supseteq \mathsf{\Delta} \cap ( \mathsf{O}_{\mathsf{O}} \cup \mathsf{CN}_\mathsf{O} )$.
As a consequence $\mathsf{\Delta} \cap \mathsf{O}_{\mathsf{O}} = \mathsf{\Delta} \cap \mathsf{O} = \mathsf{\Delta} \cap \mathsf{CN}_\mathsf{O}$ and the first two claims hold true.
If $(u,u) \in \mathsf{CN}_\mathsf{A}$, then $(u,u) \in \mathsf{A} \cap \mathsf{A}_\mathsf{I}^{\scriptscriptstyle\complement} = \mathsf{A}_\mathsf{N}$; hence $\mathsf{\Delta} \cap \mathsf{CN}_\mathsf{A} \subseteq \mathsf{\Delta} \cap \mathsf{A}_\mathsf{N}$.
Conversely, if $(u,u) \in \mathsf{A}_\mathsf{N}$, then $\rsv[u,u] \equiv \rsp[u,u] \equiv u$, $q=0$ and clearly $(u,u) \in \mathsf{CN}_\mathsf{A}$; hence $\mathsf{\Delta} \cap \mathsf{A}_\mathsf{N} \subseteq \mathsf{\Delta} \cap \mathsf{CN}_\mathsf{A}$.
\end{proof}

\begin{corollary}
Let $\mathcal{I}$ be an invariant domain of $\rsv$.
If there exist $u_\ell,u_r \in \mathcal{I}$ such that $u_{\rm v}$ has a rarefaction taking value $q=0$, then $\mathcal{I}^2 \not\subseteq \mathsf{CN}$.
\end{corollary}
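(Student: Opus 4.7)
The plan is to argue by contradiction: suppose $\mathcal{I}^2 \subseteq \mathsf{CN}$. The offending pair $(u_\ell,u_r) \in \mathcal{I}^2$ (for which $u_{\rm v}$ contains a rarefaction passing through a state with $q=0$) would then satisfy $(u_\ell,u_r) \in \mathsf{CN}$. By Theorem~\ref{teo}\ref{I:prop:consistrsv}, $\mathsf{CN} = \mathsf{CN}_\mathsf{O} \cup \mathsf{CN}_\mathsf{A}$ with $\mathsf{CN}_\mathsf{O}\subseteq \mathsf{O}$ and $\mathsf{CN}_\mathsf{A}\subseteq\mathsf{A}$, so I split into two cases, aiming to derive a contradiction in each.

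In the open case $(u_\ell,u_r) \in \mathsf{CN}_\mathsf{O}$, formula \eqref{eq:rsvo} gives $u_{\rm v}\equiv u_{\rm p}$; but the very definition of $\mathsf{CN}_\mathsf{O}$ requires ``$q_{\rm p}\neq 0$ along any rarefaction'', which directly contradicts the hypothesis that $u_{\rm v}$ has a rarefaction with $q=0$ somewhere.

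In the active case $(u_\ell,u_r) \in \mathsf{CN}_\mathsf{A}$, formulas \eqref{eq:rsva}--\eqref{eq:rsvqm} (with $q_m=0$) show that $u_{\rm v}$ reduces to a single 1-wave from $u_\ell$ to $u_m^- = \hat{u}(0,u_\ell)$ on $\xi<0$ and a single 2-wave from $u_m^+ = \check{u}(0,u_r)$ to $u_r$ on $\xi\geq 0$. The defining inequalities of $\mathsf{CN}_\mathsf{A}$ yield $q_\ell\geq 0$ and $q_r\leq 0$. I plan to show that under these sign constraints neither half of $u_{\rm v}$ can be a rarefaction. Indeed, a 1-rarefaction from $u_\ell$ to $u_m^-$ would require $\rho_m^- < \rho_\ell$, but formula \eqref{eq:hat0} gives $\hat{\rho}_\ell \geq \rho_\ell$ whenever $v_\ell\geq 0$ (with equality exactly when $v_\ell=0$, in which case $u_m^- = u_\ell$ and the left wave is trivial). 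A symmetric computation using \eqref{eq:check0} rules out 2-rarefactions on the right. Hence $u_{\rm v}$ contains no rarefactions at all, again contradicting the hypothesis.

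Both cases thus yield a contradiction, so $(u_\ell,u_r)\notin\mathsf{CN}$ and therefore $\mathcal{I}^2\not\subseteq\mathsf{CN}$. The only delicate point is the sign analysis in the active case; it reduces to reading off the monotonicity of $\hat{\rho}_\ell$ and $\check{\rho}_r$ from \eqref{eq:hat0}--\eqref{eq:check0}, which is immediate, so no genuine obstacle is expected.
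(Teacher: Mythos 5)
Your proof is correct, but it follows a genuinely different route from the paper's. The paper does not argue about the pair $(u_\ell,u_r)$ at all: it invokes Proposition~\ref{pro:andrea} ($\mathsf{\Delta}\cap\mathsf{CN}=\mathsf{\Delta}\cap\mathsf{A}_\mathsf{I}^{\scriptscriptstyle\complement}$), picks a point $u_0=u_{\rm v}(\xi_o+\varepsilon)$ on the given rarefaction close to where $q_{\rm v}$ vanishes, so that $0<|\check{p}(u_0)-\hat{p}(u_0)|<M$, i.e.\ $(u_0,u_0)\in\mathsf{A}_\mathsf{I}$, and uses the \emph{invariance} of $\mathcal{I}$ to conclude $u_0\in\mathcal{I}$, hence $(u_0,u_0)\in\mathcal{I}^2\setminus\mathsf{CN}$. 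You instead show directly that the original pair $(u_\ell,u_r)$ cannot lie in $\mathsf{CN}=\mathsf{CN}_\mathsf{O}\cup\mathsf{CN}_\mathsf{A}$: in the open case the clause ``$q_{\rm p}\ne0$ along any rarefaction'' in $\mathsf{CN}_\mathsf{O}$ is violated outright, and in the active case the sign conditions $q_\ell\ge0\ge q_r$ together with \eqref{eq:hat0},\eqref{eq:check0} force both halves of $u_{\rm v}$ to be shocks or trivial (indeed $\hat\rho_\ell\ge\rho_\ell$ for $v_\ell\ge0$ and $\check\rho_r\ge\rho_r$ for $v_r\le0$, while a 1-, resp.\ 2-, rarefaction would need the opposite strict inequality), so $u_{\rm v}$ contains no rarefaction at all. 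Both case analyses are sound and exhaustive, since $\mathsf{O}$ and $\mathsf{A}$ partition $\Omega^2$. What the two approaches buy: yours leans on the full characterization of $\mathsf{CN}$ in Theorem~\ref{teo}, \ref{I:prop:consistrsv}, but never uses that $\mathcal{I}$ is invariant, so it actually proves the stronger statement that any datum whose $\rsv$-solution has a rarefaction through $q=0$ is itself inconsistent; the paper's argument needs only Proposition~\ref{pro:andrea} plus invariance, and by locating the failure at a diagonal pair inside $\mathcal{I}$ it dovetails with the subsequent results on (minimal) invariant domains such as Corollaries~\ref{lem:lem} and \ref{cor:cor}.
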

\begin{proof}
By Proposition~\ref{pro:andrea} we have that $\rsv$ is consistent at no $(u_0,u_0) \in \mathsf{A}_\mathsf{I}$.
Hence, it is sufficient to prove that there exists $u_0 \in \mathcal{I}$ such that $(u_0,u_0) \in \mathsf{A}_\mathsf{I}$.
By assumption there exist $\xi_-<\xi_+$ and $\xi_o \in [\xi_-,\xi_+]$, such that $u_{\rm v}$ performs a rarefaction in the cone $\xi_-\le x/t \le \xi_+$ and $q_{\rm v}(\xi_o)=0$.
By a continuity argument there exists a sufficiently small $\varepsilon \neq 0$ such that $\xi_o^\varepsilon \doteq \xi_o+\varepsilon \in [\xi_-,\xi_+]$ and $0<|\check{p}(u_{\rm v}(\xi_o^\varepsilon)) - \hat{p}(u_{\rm v}(\xi_o^\varepsilon))| < M$, namely $(u_{\rm v}(\xi_o^\varepsilon),u_{\rm v}(\xi_o^\varepsilon)) \in \mathsf{A}_\mathsf{I} \cap \mathcal{I}^2$.
\end{proof}

\begin{corollary}\label{lem:lem}
Let $u \in \Omega$.
There exists an invariant domain $\mathcal{I}$ of $\rsv$ such that $\{(u,u)\} \subseteq \mathcal{I}^2 \subseteq \mathsf{CN}$ if and only if $(u,u)\in\mathsf{A}_\mathsf{I}^{\scriptscriptstyle\complement}$.
\end{corollary}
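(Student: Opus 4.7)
The plan is to reduce the corollary to Proposition~\ref{pro:andrea}, which already records the key identity $\mathsf{\Delta} \cap \mathsf{CN} = \mathsf{\Delta} \cap \mathsf{A}_\mathsf{I}^{\scriptscriptstyle\complement}$ on the diagonal. Both implications then boil down to recognizing whether $(u,u)$ lies in $\mathsf{A}_\mathsf{I}^{\scriptscriptstyle\complement}$; the only constructive content is to exhibit a concrete invariant domain in the nontrivial direction.

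For the forward implication, I would simply argue: if $\mathcal{I}$ is an invariant domain of $\rsv$ with $u\in\mathcal{I}$ and $\mathcal{I}^2 \subseteq \mathsf{CN}$, then $(u,u)\in\mathcal{I}^2 \subseteq \mathsf{CN}$, and since $(u,u)\in\mathsf{\Delta}$, Proposition~\ref{pro:andrea} yields $(u,u)\in\mathsf{A}_\mathsf{I}^{\scriptscriptstyle\complement}$.

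For the converse, assume $(u,u) \in \mathsf{A}_\mathsf{I}^{\scriptscriptstyle\complement}$; my candidate is the singleton $\mathcal{I} \doteq \{u\}$. First I would verify that it is an invariant domain. Since $(u,u) \in \mathsf{A}_\mathsf{I}^{\scriptscriptstyle\complement}$, by the very definition of this set we have $\rsv[u,u] \equiv \rsp[u,u]$; moreover, $\rsp$ is coherent and consistent on $\Omega^2$ by Proposition~\ref{prop:rsp}, so Proposition~\ref{pro:FernandoGaviria} applied to the pair $(u,u)$ gives $\rsp[u,u]\equiv u$. Therefore $\rsv[\mathcal{I},\mathcal{I}](\R) = \{u\} \subseteq \mathcal{I}$, confirming invariance. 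Finally, $\mathcal{I}^2 = \{(u,u)\} \subseteq \mathsf{CN}$ follows from the other half of Proposition~\ref{pro:andrea}.

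The whole argument is bookkeeping from previously established propositions and I expect no genuine obstacle. The only observation worth emphasizing is that invariance of the singleton at a diagonal point hinges on the identity $\rsv[u,u]\equiv u$, which is not automatic for coupling Riemann solvers (Proposition~\ref{pro:FernandoGaviria} was noted to fail in general for $\rsv$), but is rescued precisely by the hypothesis $(u,u)\in\mathsf{A}_\mathsf{I}^{\scriptscriptstyle\complement}$, which routes the evaluation through the classical Lax solver $\rsp$.
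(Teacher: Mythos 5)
Your proposal is correct and follows essentially the same route as the paper: both directions reduce to Proposition~\ref{pro:andrea} on the diagonal, and in the nontrivial direction the singleton $\{u\}$ is shown to be invariant because $(u,u)\in\mathsf{A}_\mathsf{I}^{\scriptscriptstyle\complement}$ forces $\rsv[u,u]\equiv\rsp[u,u]\equiv u$. Your extra citation of Proposition~\ref{pro:FernandoGaviria} for $\rsp[u,u]\equiv u$ is a harmless (and accurate) way of justifying a fact the paper uses directly.
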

\begin{proof}
If $(u,u) \in \mathsf{A}_\mathsf{I}^{\scriptscriptstyle\complement}$, then $\rsv[u,u](\R) = \rsp[u,u](\R) = \{u\}$ and the minimal invariant domain containing $\{u\}$ is $\mathcal{I}=\{u\}$; by Proposition~\ref{pro:andrea} we have $\mathcal{I}^2 \subset \mathsf{\Delta} \cap \mathsf{A}_\mathsf{I}^{\scriptscriptstyle\complement} \subset \mathsf{CN}$.
On the other hand, if $(u,u) \in \mathsf{A}_\mathsf{I}$, then it is sufficient to observe that $(u,u) \not\in \mathsf{CN}$ by Proposition~\ref{pro:andrea}.
\end{proof}

\begin{corollary}\label{cor:cor}
Let $u\in\Omega$ and $\mathcal{I}$ be the minimal invariant domain containing $\{u\}$.
\begin{itemize}[leftmargin=*,nolistsep]\setlength{\itemsep}{0cm}\setlength\itemsep{0em}%
\item
If $(u,u)\in \mathsf{A}_\mathsf{I}^{\scriptscriptstyle\complement}$, then $\mathcal{I}=\{u\}$ and $\mathcal{I}^2\subset \mathsf{CN} \subset \mathsf{CH}$.
\item
If $(u,u)\in \mathsf{A}_\mathsf{I}$, then $\mathcal{I} = \mathcal{R}_2([\check{\rho}(u), \rho],u) \cup ( [\check{\rho}(u), \hat{\rho}(u)]\times \{0\})$, $\mathcal{I}^2\subset \mathsf{CH}$ and $\mathcal{I}^2 \not\subseteq \mathsf{CN}$.
\end{itemize}
\end{corollary}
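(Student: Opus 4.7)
The first case follows at once from previously established results: if $(u,u)\in\mathsf{A}_\mathsf{I}^{\scriptscriptstyle\complement}$, then Corollary~\ref{lem:lem} gives $\rsv[u,u]\equiv u$, so the singleton $\{u\}$ is itself an invariant domain, whence $\mathcal{I}=\{u\}$ and $\mathcal{I}^2=\{(u,u)\}\subset\mathsf{CN}\subset\mathsf{CH}$ by Corollary~\ref{lem:lem} and Corollary~\ref{cor}.

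For the second case $(u,u)\in\mathsf{A}_\mathsf{I}$, we necessarily have $q\neq 0$ (else $(u,u)\in\mathsf{A}_\mathsf{N}$); I assume without loss of generality $q>0$, the case $q<0$ being symmetric with the roles of $\mathcal{R}_1$, $\mathcal{R}_2$ swapped. Denote by $\mathcal{I}_\star$ the candidate set in the statement. The first step is to show $\mathcal{I}_\star^2\subseteq\mathsf{A}$. For any $u_*\in\mathcal{I}_\star$ I claim $\hat{\rho}(u_*),\check{\rho}(u_*)\in[\check{\rho}(u),\hat{\rho}(u)]$. For $u_*$ on the segment branch $[\check{\rho}(u),\hat{\rho}(u)]\times\{0\}$ this is immediate from \eqref{eq:hat0},\eqref{eq:check0}, since $v_*=0$ forces $\hat{u}(u_*)=\check{u}(u_*)=u_*$. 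For $u_*$ on the rarefaction branch $\mathcal{R}_2([\check{\rho}(u),\rho],u)$, property \ref{L1} in Proposition~\ref{prop:lax} yields $\mathcal{R}_2^{u_*}=\mathcal{R}_2^u$, from which $\check{u}(u_*)=\check{u}(u)$ follows by inspecting $\mathcal{BL}_2^{u_*}\cap\{q=0\}$; using \eqref{eq:hat0} in the $(\mu,\nu)$-coordinates together with the identity $\mu_*=\nu_*+\log(\check{\rho}(u))$ (equivalent to $z(u_*)=z(u)$), one checks that $\hat{\mu}(u_*)=\nu_*+\log(\check{\rho}(u))-\Xi^{-1}(\nu_*)$ is strictly increasing in $\nu_*\in[0,\nu]$, so $\hat{\rho}(u_*)\in[\check{\rho}(u),\hat{\rho}(u)]$. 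Consequently, for any $(u_\ell,u_r)\in\mathcal{I}_\star^2$,
\[
|\hat{p}(u_\ell)-\check{p}(u_r)|=a^2|\hat{\rho}(u_\ell)-\check{\rho}(u_r)|\le a^2\bigl(\hat{\rho}(u)-\check{\rho}(u)\bigr)=|\hat{p}(u)-\check{p}(u)|\le M,
\]
so $(u_\ell,u_r)\in\mathsf{A}$.

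Hence $\rsv[u_\ell,u_r]$ takes the active form \eqref{eq:rsva},\eqref{eq:rsvqm}: a 1-wave from $u_\ell$ to $\hat{u}(u_\ell)$, a stationary under-compressive jump, and a 2-wave from $\check{u}(u_r)$ to $u_r$. A short case analysis shows the 1-wave is either trivial (if $u_\ell$ is on the segment branch) or a single 1-shock with trace $\{u_\ell,\hat{u}(u_\ell)\}\subseteq\mathcal{I}_\star$ (if $u_\ell$ is on the rarefaction branch, since then $v_\ell>0$ forces $\hat{\rho}(u_\ell)>\rho_\ell$); similarly the 2-wave is either trivial or a 2-rarefaction along $\mathcal{R}_2^{\check{u}(u_r)}=\mathcal{R}_2^u$ with trace $\mathcal{R}_2([\check{\rho}(u),\rho_r],u)\subseteq\mathcal{I}_\star$. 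Since the endpoints $\hat{u}(u_\ell),\check{u}(u_r)$ lie on the segment branch by the previous paragraph, $\rsv[u_\ell,u_r](\R)\subseteq\mathcal{I}_\star$, so $\mathcal{I}_\star$ is an invariant domain containing $u$.

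For minimality, any invariant domain containing $\{u\}$ must contain $\rsv[u,u](\R)$, which in our configuration equals the rarefaction arc $\mathcal{R}_2([\check{\rho}(u),\rho],u)$ together with the two endpoints $\hat{u}(u),\check{u}(u)$. Applying $\rsv$ a second time to pairs $(u_\alpha,u_{\alpha'})$ on this arc with $\rho_\alpha>\rho_{\alpha'}$ yields the 1-shock trace $\{u_\alpha,\hat{u}(u_\alpha)\}$; by continuity and the strict monotonicity of $\nu_*\mapsto\hat{\mu}(u_*)$ established above, varying $\alpha$ makes $\hat{\rho}(u_\alpha)$ sweep the full interval $[\check{\rho}(u),\hat{\rho}(u)]$, filling the segment branch. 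Thus $\mathcal{I}\supseteq\mathcal{I}_\star$, and combined with invariance this gives $\mathcal{I}=\mathcal{I}_\star$. Finally $\mathcal{I}^2\subseteq\mathsf{A}\subseteq\mathsf{CH}$ by Theorem~\ref{teo}~\ref{I:pro:4.1}, while $(u,u)\in\mathcal{I}^2\cap\mathsf{A}_\mathsf{I}$ is not in $\mathsf{CN}$ by Proposition~\ref{pro:andrea}, yielding the last two assertions. The main technical obstacle is the monotonicity of $\nu_*\mapsto\hat{\mu}(u_*)$ along the rarefaction branch, which is what guarantees both that $\hat{\rho}(u_*)$ stays inside the segment (for invariance) and that $\hat{\rho}(u_\alpha)$ surjects onto the whole segment (for minimality); once this is in hand, the rest is bookkeeping.
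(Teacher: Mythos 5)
Your proof is correct and follows essentially the same route as the paper's: show that the candidate set $\mathcal{D}$ satisfies $\mathcal{D}^2\subset\mathsf{A}$, verify invariance through the same case analysis of $\rsv[u_\ell,u_r](\R)$ for $u_\ell,u_r\in\mathcal{D}$, obtain $\mathcal{I}\supseteq\mathcal{D}$ by applying $\rsv$ twice starting from $(u,u)$, and conclude via Theorem~\ref{teo}, \ref{I:pro:4.1}, and Proposition~\ref{pro:andrea}. The only difference is that you spell out, via the monotonicity of $\nu_*\mapsto\hat{\mu}(u_*)$ along the 2-rarefaction branch, two facts the paper leaves implicit, namely that $\mathcal{D}^2\subset\mathsf{A}$ and that the images $\hat{u}(u_\alpha)$ sweep the whole segment $[\check{\rho}(u),\hat{\rho}(u)]\times\{0\}$.
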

\begin{proof}
$\bullet$~If $(u,u)\in \mathsf{A}_\mathsf{I}^{\scriptscriptstyle\complement}$, then $\rsv[u,u]=\rsp[u,u]\equiv u$, hence $\mathcal{I}=\{u\}$; moreover by Corollary~\ref{lem:lem} and Corollary~\ref{cor} we have $\mathcal{I}^2\subset \mathsf{CN} \subset \mathsf{CH}$.
\\$\bullet$~Let $(u,u) \in \mathsf{A}_\mathsf{I}$ and $\mathcal{D} \doteq \mathcal{R}_2([\check{\rho}(u), \rho],u) \cup ( [\check{\rho}(u), \hat{\rho}(u)]\times \{0\})$.
We first prove that $\mathcal{I} = \mathcal{D}$.
Since $(u,u) \not\in \mathsf{A}_\mathsf{N}$, we have $q\ne0$.
Assume $q>0$; the case $q<0$ is similar.
We have $\mathcal{I} \supseteq \mathcal{D}$ because
\begin{align*}
&\rsv[u,u](\R) =\{\hat{u}(u)\}\cup \mathcal{R}_2\left([\check{\rho}(u), \rho],u\right) ,&
&\rsv[\mathcal{R}_2\left([\check{\rho}(u), \rho],u\right),\check{u}(u)](\R) = \mathcal{D}.
\end{align*}
It remains to prove that $\mathcal{D}$ is an invariant domain.
This follows by observing that $\mathcal{D}^2 \subset \mathsf{A}$ and that for any $u_\ell,u_r\in \mathcal{D}$ 
\begin{align*}
u_{\rm v}(\R)&=
\begin{cases}
\{u_\ell, \hat{u}_\ell\}\cup \mathcal{R}_2\left([\check{\rho}(u), \rho_r],u\right)&\text{if }u_\ell,u_r \in \mathcal{R}_2\left([\check{\rho}(u), \rho],u\right),
\\
\{u_\ell, u_r\}&\text{if }u_\ell, u_r \in [\check{\rho}(u), \hat{\rho}(u)]\times \{0\},
\\
\{u_\ell, \hat{u}_\ell, u_r\}&\text{if }(u_\ell,u_r) \in \mathcal{R}_2\left([\check{\rho}(u), \rho],u\right) \times \left([\check{\rho}(u), \hat{\rho}(u)]\times \{0\}\right),
\\
\{u_\ell\} \cup \mathcal{R}_2\left([\check{\rho}(u), \rho_r],u\right)&\text{if }(u_\ell,u_r) \in \left([\check{\rho}(u), \hat{\rho}(u)]\times \{0\}\right) \times \mathcal{R}_2\left([\check{\rho}(u), \rho],u\right),
\end{cases}
\end{align*}
whence $\rsv[\mathcal{D}^2](\R)\subseteq \mathcal{D}$.
By Theorem~\ref{teo}, \ref{I:pro:4.1}, we have $\mathcal{I}^2 \subset \mathsf{A} \subset \mathsf{CH}$.
By Proposition~\ref{pro:andrea} we have $(u,u) \in \mathcal{I}^2\setminus \mathsf{CN}$.
\end{proof}

We now extend the previous corollary by constructing the minimal invariant domain containing \emph{two} elements of $\Omega$ in two particular cases.

\begin{corollary}\label{cor:inv10.02}
Fix $u_0,u_1\in\Omega$ and let $u_2 \doteq \hat{u}(u_1)$ and $u_3 \doteq \check{u}(u_1)$.
Assume that
\begin{align*}
&\nu_0=0<\nu_1,&&\mu_1+\nu_1<\mu_0,&&(u_1,u_1) \in \mathsf{A}_\mathsf{I}
\end{align*}
and let $\mathcal{I}$ be the minimal invariant domain containing $\{u_0,u_1\}$.
Then $\mathcal{I}^2\not\subseteq\mathsf{CN}$ and moreover:
\begin{itemize}[leftmargin=*,nolistsep]\setlength{\itemsep}{0cm}\setlength\itemsep{0em}%
\item
if $p_0-p_3\le M$, then $\mathcal{I} = \{u_0\} \cup \mathcal{R}_2([\rho_3, \rho_1],u_1) \cup ( [\rho_3, \rho_2]\times \{0\})$ and $\mathcal{I}^2 \subset \mathsf{CH}$;
\item
if $p_2-p_3 = M = p_0-p_2$, then $\mathcal{I} = \mathcal{I}_{u_0}$ and $\mathcal{I}^2\not\subseteq\mathsf{CH}$.
\end{itemize}
\end{corollary}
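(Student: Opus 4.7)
The approach starts with two observations common to both cases. Since $(u_1,u_1)\in\mathsf{A}_\mathsf{I}$, Corollary~\ref{cor:cor} ensures that any invariant domain containing $u_1$ must contain $\mathcal{R}_2([\rho_3,\rho_1],u_1)\cup([\rho_3,\rho_2]\times\{0\})$, so $\mathcal{I}\supseteq\{u_0\}\cup\mathcal{R}_2([\rho_3,\rho_1],u_1)\cup([\rho_3,\rho_2]\times\{0\})$ in either case. The bound $\mathcal{I}^2\not\subseteq\mathsf{CN}$ is then immediate in both cases, since $(u_1,u_1)\in\mathsf{A}_\mathsf{I}\cap\mathcal{I}^2$ and Proposition~\ref{pro:andrea} gives $\mathsf{\Delta}\cap\mathsf{CN}=\mathsf{\Delta}\cap\mathsf{A}_\mathsf{I}^{\scriptscriptstyle\complement}$. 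I would also record the elementary facts $|\Xi^{-1}(\nu_1)|<\nu_1$ (whence $\mu_3<\mu_2<\mu_0$ and $p_3<p_2<p_0$) and $u_1\in\mathcal{I}_{u_0}$ (which follow from $\mu_1\pm\nu_1<\mu_0$ together with \eqref{eq:hat0},\eqref{eq:check0}).

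For the case $p_0-p_3\le M$, set $\mathcal{I}_\star\doteq\{u_0\}\cup\mathcal{R}_2([\rho_3,\rho_1],u_1)\cup([\rho_3,\rho_2]\times\{0\})$. Invariance of $\mathcal{I}_\star$ rests on three identities in $(\mu,\nu)$-coordinates: along the 2-rarefaction one has $\mu-\nu\equiv\mu_3$, whence $\check u\equiv u_3$ on that arc; on $[\rho_3,\rho_2]\times\{0\}$ both $\hat u$ and $\check u$ are the identity; and $\hat u$ maps the 2-rarefaction bijectively onto $[\rho_3,\rho_2]\times\{0\}$, sending $u_3\mapsto u_3$ and $u_1\mapsto u_2$. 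A pair-by-pair inspection of $\mathcal{I}_\star^2$ then yields the uniform bound $|\check p(u_r)-\hat p(u_\ell)|\le p_0-p_3\le M$, so the valve is always closed and the two ordinary Riemann problems in \eqref{eq:rsva} have ranges contained in $\mathcal{I}_\star$. Minimality follows because $\rsv[u_1,u_1]$ already generates $u_2$ and the whole 2-rarefaction, while $\rsv[u,u_0]$ for $u$ sweeping the 2-rarefaction drives $\hat u(u)$ continuously across $[\rho_3,\rho_2]\times\{0\}$. Since $\mathcal{I}_\star^2\subset\mathsf{A}$, Theorem~\ref{teo}, \ref{I:pro:4.1} gives $\mathcal{I}_\star^2\subset\mathsf{CH}$.

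For the case $p_2-p_3=M=p_0-p_2$, the invariance of $\mathcal{I}_{u_0}$ is given by Theorem~\ref{teo}, \ref{IV} (using $q_0=0$), and $u_1\in\mathcal{I}_{u_0}$, so $\mathcal{I}\subseteq\mathcal{I}_{u_0}$; the nontrivial direction is $\mathcal{I}_{u_0}\subseteq\mathcal{I}$. Now $p_0-p_3=2M>M$, hence $(u_0,u_1)\in\mathsf{O}$ and $\rsv[u_0,u_1]=\rsp[u_0,u_1]$: inspecting the four combinations of wave types for $\mathcal{FL}_1^{u_0}\cap\mathcal{BL}_2^{u_1}$ and using $\mu_0>\mu_1+\nu_1$ leaves only the 1-rarefaction/2-shock option, placing the intermediate state $\tilde u$ on the upper boundary $\mathcal{R}_1^{u_0}$ of $\mathcal{I}_{u_0}$, so that the range of $\rsv[u_0,u_1]$ already contains a nontrivial arc of $\mathcal{R}_1^{u_0}$. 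Iterating $\rsv$ on the growing set---applying $\rsv[u_0,\cdot]$ and $\rsv[\cdot,u_0]$ to boundary states, and using the 2-waves produced by $\rsv[u_1,\cdot]$ together with the explicit action of $\hat u,\check u$---generates increasingly long arcs of $\mathcal{R}_1^{u_0}\cup\mathcal{R}_2^{u_0}$, and their 2-wave translates fill the triangle $\mathcal{I}_{u_0}$, giving $\mathcal{I}=\mathcal{I}_{u_0}$. Finally, to prove $\mathcal{I}^2\not\subseteq\mathsf{CH}$ I would exhibit a pair $(u_\ell,u_r)\in\mathcal{I}_{u_0}^2$ with $|\check p(u_r)-\hat p(u_\ell)|>M$ (valve open) but whose Lax traces satisfy $|\check p(u_{\rm p}^+)-\hat p(u_{\rm p}^-)|\le M$ (the valve would close at the next step), i.e., $(u_\ell,u_r)\in\mathsf{O}_\mathsf{A}=\mathsf{O}\setminus\mathsf{O}_\mathsf{O}$; the equalities $p_0-p_2=p_2-p_3=M$ make such a construction explicit.

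The main obstacle is the inclusion $\mathcal{I}_{u_0}\subseteq\mathcal{I}$ in the second case: since $\mathcal{I}_{u_0}$ is unbounded in $(\mu,\nu)$-coordinates, no finite iteration of $\rsv$ reaches every one of its points, and one has to work with the minimal invariant set directly, combining a parametric sweep of the boundary arcs $\mathcal{R}_1^{u_0},\mathcal{R}_2^{u_0}$ with a continuity argument showing that the 2-wave translates of boundary points fill the whole triangle.
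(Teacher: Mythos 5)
Your handling of $\mathcal{I}^2\not\subseteq\mathsf{CN}$ and of the first case $p_0-p_3\le M$ is correct and essentially the paper's own argument: the lower bound via Corollary~\ref{cor:cor}, the pairwise check that all pairs in $\bigl(\{u_0\}\cup\mathcal{R}_2([\rho_3,\rho_1],u_1)\cup([\rho_3,\rho_2]\times\{0\})\bigr)^2$ lie in $\mathsf{A}$ with solution ranges staying inside, and then coherence from Theorem~\ref{teo}, \ref{I:pro:4.1}.

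The genuine gap is the second case, where both claims are announced rather than proved. For $\mathcal{I}_{u_0}\subseteq\mathcal{I}$ you say that iterating $\rsv$ ``generates increasingly long arcs of $\mathcal{R}_1^{u_0}\cup\mathcal{R}_2^{u_0}$'' whose 2-wave translates ``fill the triangle'', and you yourself flag this as the unresolved obstacle. The missing mechanism, which the paper supplies, is a reflection scheme anchored at zero-flow states rather than at $u_1$: since $p_0-p_3=2M>M$, the pairs $(u_0,u_3)$ and $(u_3,u_0)$ lie in $\mathsf{O}$ and their solutions produce two symmetric states $u_4\in\mathcal{FL}_1^{u_0}\cap\mathcal{BL}_2^{u_3}$ and $u_5\in\mathcal{FL}_1^{u_3}\cap\mathcal{BL}_2^{u_0}$ on the boundary rarefaction curves, with $\mu_4=\mu_5$ and $\nu_4=-\nu_5>0$; then $(u_5,u_4)\in\mathsf{A}_\mathsf{N}$ because $\hat{u}(u_5)=\check{u}(u_4)\doteq u_6$, and solving it creates the new zero-flow state $u_6$ with $\mu_6<\mu_3$, so that $(u_0,u_6),(u_6,u_0)\in\mathsf{O}$ again and the construction repeats, extending the arcs of $\mathcal{R}_1^{u_0}$ and $\mathcal{R}_2^{u_0}$ at every step; the interior is then swept by symmetric pairs $u_\ell\in\mathcal{R}_2((0,\rho_0),u_0)$, $u_r\in\mathcal{R}_1((0,\rho_0),u_0)$ with $\mu_\ell=\mu_r$, $\nu_\ell=-\nu_r<0$, which belong to $\mathsf{A}_\mathsf{N}$ (since $\hat{u}_\ell=\check{u}_r$) and are solved by two rarefactions filling the region between the boundary arcs. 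Your recipe of pairing boundary states with $u_0$ or $u_1$ does not obviously yield arbitrarily long boundary arcs: the point of the paper's scheme is that each newly created zero-flow state has pressure strictly below $p_3$, which guarantees the valve stays open at every stage of the iteration.

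Likewise, for $\mathcal{I}^2\not\subseteq\mathsf{CH}$ you only promise a pair in $\mathsf{O}_\mathsf{A}$; the paper exhibits one via Proposition~\ref{prop:coo}. Using $\Phi(-1)<a^2$ and $p_0=p_3+2M>2M$, choose $u_\ell,u_r\in\mathcal{I}_{u_0}$ with $\nu_\ell=0=\nu_r$, $M<p_\ell\le a^2M/\Phi(-1)<2M$ and $p_\ell-p_r>M$; then $(u_\ell,u_r)\in\mathsf{O}$, $\tilde{\nu}>0=\nu_\ell$, and $\rho_\ell\,\Phi\left(-\min\{1,\tilde{\nu}\}\right)\le\rho_\ell\,\Phi(-1)\le M$, so $(u_\ell,u_r)\in\mathcal{I}_{u_0}^2\cap\mathsf{O}_\mathsf{A}^1$. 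Without this construction and the iteration above, the second bullet of the corollary is not established.
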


\begin{proof}
We notice that by assumption we have $\mu_2<\mu_1+\nu_1<\mu_0$.
By Proposition~\ref{pro:andrea} we deduce $(u_1,u_1) \in \mathcal{I}^2\setminus \mathsf{CN}$.
Clearly, see \figurename~\ref{fig:ex2}, $(u_0,u_0)$, $(u_2,u_2)$, $(u_3,u_3) \in  \mathsf{A}_\mathsf{N}$, $\rho_3<\rho_1<\rho_2$; moreover $\rho_0>\rho_2$ and $0<p_2-p_3\le M$ in both the considered cases.
\begin{figure}[!htbp]
      \centering
      \begin{psfrags}
      \psfrag{r}[r,c]{$\rho$}
      \psfrag{q}[l,t]{$q$}
      \psfrag{a}[c,b]{$u_0\vphantom{\int}$}
      \psfrag{b}[l,b]{$u_1\vphantom{\int}$}
      \psfrag{c}[l,b]{$u_2\vphantom{\int}$}
      \psfrag{d}[c,b]{$u_3\,\vphantom{\int}$}
        \includegraphics[height=.2\textheight]{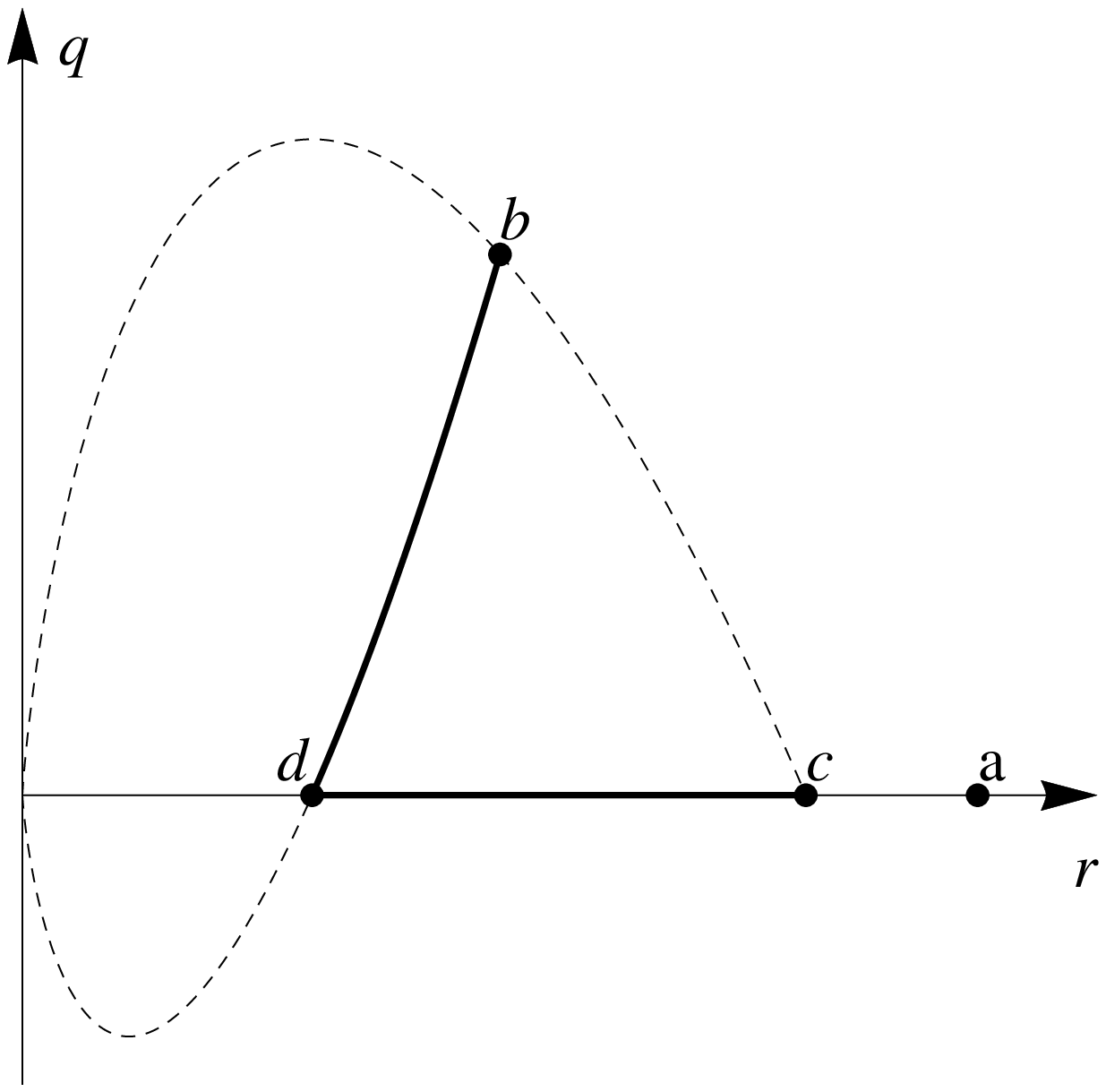}
        \qquad
      \psfrag{m}[r,c]{$\mu$}
      \psfrag{n}[l,t]{$\nu$}
      \psfrag{a}[l,b]{$u_0\vphantom{\int}$}
      \psfrag{b}[l,B]{$u_1$}
      \psfrag{c}[r,B]{$u_2$}
      \psfrag{d}[l,b]{$u_3\vphantom{\int}$}
      \psfrag{e}[l,b]{$u_4\vphantom{\int}$}
      \psfrag{f}[l,c]{$u_5$}
      \psfrag{g}[c,b]{$u_6\vphantom{\int}$}
      \psfrag{h}[l,b]{$u_7\vphantom{\int}$}
      \psfrag{i}[l,c]{$u_8$}
      \psfrag{l}[r,b]{$u_9\vphantom{\int}$}
        \includegraphics[height=.2\textheight]{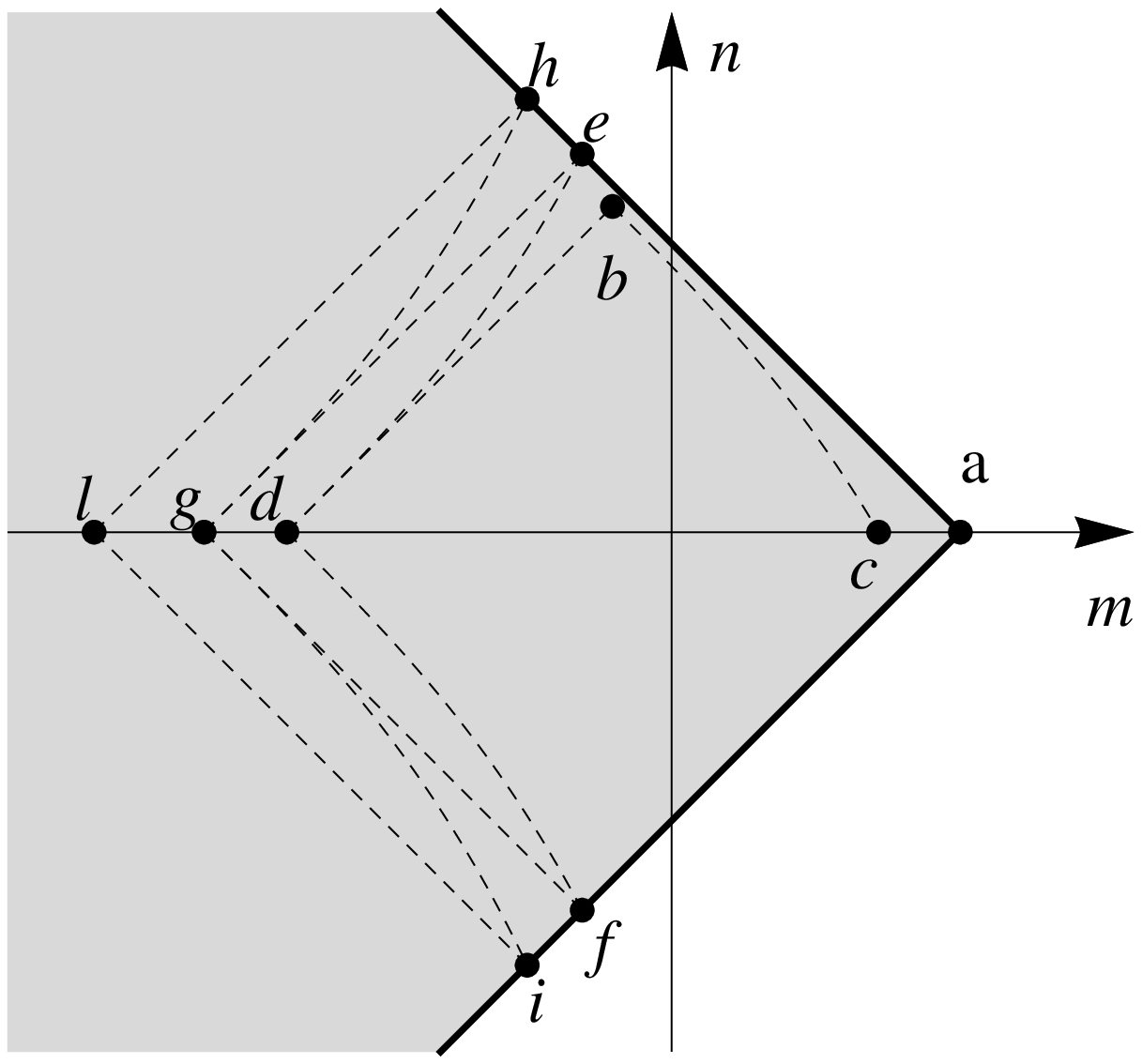}
      \end{psfrags}
        \caption{The minimal invariant domain containing $\{u_0,u_1\}$ constructed in Corollary~\ref{cor:inv10.02} for $p_0-p_3\le M$, left, and $p_2-p_3 = M = p_0-p_2$, right.}
\label{fig:ex2}
\end{figure}
By proceeding as in the proof of Corollary~\ref{cor:cor} we have
\[\mathcal{I} \supseteq \mathcal{D}\doteq\mathcal{R}_2\left([\rho_3, \rho_1],u_1\right) \cup \left( [\rho_3, \rho_2]\times \{0\}\right).\]
\noindent$\bullet$~If  $\rho_2<\rho_0$ and $p_0-p_3\le M$, then $\mathcal{I} = \mathcal{D} \cup \{u_0\}$.
This follows by observing that $(\mathcal{D} \cup \{u_0\})^2 \subset \mathsf{A}$ and that for any $u_d \in \mathcal{D}$
\begin{align*}
\rsv[u_d,u_0](\R)&=\left\{u_d,\hat{u}_d,u_0\right\},
&
\rsv[u_0,u_d](\R)&=
\begin{cases}
\{u_d, u_0\}&\text{if }q_d=0,
\\
\{u_0\} \cup \mathcal{R}_2\left([\rho_3, \rho_d],u_1\right)&\text{if }q_d>0,
\end{cases}
\end{align*}
are subsets of $\mathcal{D} \cup \{u_0\}$.
By Theorem~\ref{teo}, \ref{I:pro:4.1}, we have that $\mathcal{I}^2 \subset \mathsf{A} \subset \mathsf{CH}$.
\\$\bullet$~Assume $\rho_2<\rho_0$ and $p_2-p_3 = M = p_0-p_2$.
We claim that
\[\mathcal{I} = \mathcal{I}_{u_0},\]
where $\mathcal{I}_{u_0}$ is defined by \eqref{eq:I4RSP0}.
Differently from the previous case, we have $(u_0,u_1), (u_0,u_3), (u_3,u_0) \in  \mathsf{O}$; notice that $(u_1,u_0), (u_2,u_0), (u_0,u_2) \in \mathsf{A}_\mathsf{I}$.
As a consequence
\begin{align*}
&\rsv[u_0,u_3](\R)=\mathcal{R}_1\left([\rho_4, \rho_0],u_0\right) \cup \{u_3\},& 
&\rsv[u_3,u_0](\R)=\{u_3\} \cup \mathcal{R}_2\left([\rho_5, \rho_0],u_5\right),
\end{align*}
where $u_4 \in \mathcal{FL}_1^{u_0} \cap \mathcal{BL}_2^{u_3}$ and $u_5 \in \mathcal{FL}_1^{u_3} \cap \mathcal{BL}_2^{u_0}$.
Observe that $\mu_4=\mu_5$, $\nu_4=-\nu_5>0$ and $\hat{u}(u_5) = \check{u}(u_4) \doteq u_6$.
As a consequence $(u_5,u_4) \in \mathsf{A}_\mathsf{N}$ and
\[\rsv[u_5,u_4](\R) = \mathcal{R}_1\left([\rho_6, \rho_5],u_5\right) \cup \mathcal{R}_2\left([\rho_6, \rho_4],u_6\right).\]
Clearly $(u_0,u_6), (u_6,u_0) \in \mathsf{O}$ and
\begin{align*}
&\rsv[u_0,u_6](\R) = \mathcal{R}_1\left([\rho_7, \rho_0],u_0\right) \cup \{u_6\},&
&\rsv[u_6,u_0](\R) = \{u_6\} \cup \mathcal{R}_2\left([\rho_8, \rho_0],u_8\right),
\end{align*}
where $u_7 \in \mathcal{FL}_1^{u_0} \cap \mathcal{BL}_2^{u_6}$ and $u_8 \in \mathcal{FL}_1^{u_6} \cap \mathcal{BL}_2^{u_0}$.
Observe that $\mu_7=\mu_8$, $\nu_7=-\nu_8>0$ and $\hat{u}(u_8) = \check{u}(u_7) \doteq u_9$.
By iterating this procedure, we obtain that
\[
\mathcal{R}_1\left((0, \rho_0],u_0\right) \cup
\mathcal{R}_2\left((0, \rho_0],u_0\right) \subset \mathcal{I}.
\]
Finally, by letting $u_\ell \in \mathcal{R}_2((0, \rho_0),u_0)$ and $u_r \in \mathcal{R}_1((0, \rho_0),u_0)$ be such that $\mu_\ell=\mu_r$ and $\nu_\ell=-\nu_r<0$, we have that $(u_\ell,u_r) \in \mathsf{A}_\mathsf{N}$ because $\hat{u}_\ell = \check{u}_r$, hence
\[
u_{\rm v}(\R) = \mathcal{R}_1\left([\hat{\rho}_\ell, \rho_\ell],u_\ell\right) \cup \mathcal{R}_2\left([\hat{\rho}_\ell, \rho_r],\hat{\rho}_\ell\right).
\]
It is therefore clear that $\mathcal{I}_{u_0}\subseteq \mathcal{I}$.
By Theorem~\ref{teo}, \ref{IV}, we have that $\mathcal{I}_{u_0}$ is an invariant domain, hence $\mathcal{I}_{u_0} = \mathcal{I}$.

We claim that $\mathcal{I}_{u_0}^2 \not\subset \mathsf{CH}$, namely $\mathcal{I}_{u_0}^2 \cap \mathsf{O}_\mathsf{A} \neq \emptyset$.
Since $\Phi(-1) < a^2$ and by assumption $p_0 > 2M$, there exist $u_\ell,u_r \in \mathcal{I}_{u_0}$ such that $p_\ell-p_r>M$, $\nu_\ell=0=\nu_r$ and $M < p_\ell \le a^2M/\Phi(-1) < 2 M$.
Then $(u_\ell,u_r) \in \mathsf{O}$, $\tilde{\nu} > 0 = \nu_\ell$ and $(u_\ell,u_r) \in \mathcal{I}_{u_0}^2 \cap \mathsf{O}_\mathsf{A}^1$ because
\[e^{\mu_\ell+\nu_\ell} \, \Phi\left(-\max\{1,\nu_\ell\} \cdot \min\{1,\tilde{\nu}\}\right) =
\rho_\ell \, \Phi\left(-\min\{1,\tilde{\nu}\}\right) \le
\rho_\ell \, \Phi\left(-1\right) \le M.\qedhere\]
\end{proof}

\section{Technical proofs}\label{s:techproofs}

\subsection{Properties of \texorpdfstring{$\rsp$}{}}\label{sec:rsp}

\begin{proof}[Proof of Proposition~\ref{prop:lax}]
We refer to the $(\mu,\nu)$-coordinates.
Property \ref{L1} is obvious because $\mathcal{R}_i^{u_{*}}$ and $\mathcal{R}_i^{u_{**}}$ are straight lines with the same slope.
Property \ref{L2} follows by reducing to a second order equation in $e^{\zeta/2}$, see \figurename~\ref{fig:L2}.
\begin{figure}[!htbp]
      \centering 
      \begin{psfrags}
      \psfrag{z}[c,B]{$\zeta$}
      \psfrag{x}[c,c]{$\Xi$}
      \psfrag{1}[c,c]{$u_{*}$}
      \psfrag{2}[c,c]{$u_{**}$}
        \includegraphics[width=.2\textwidth]{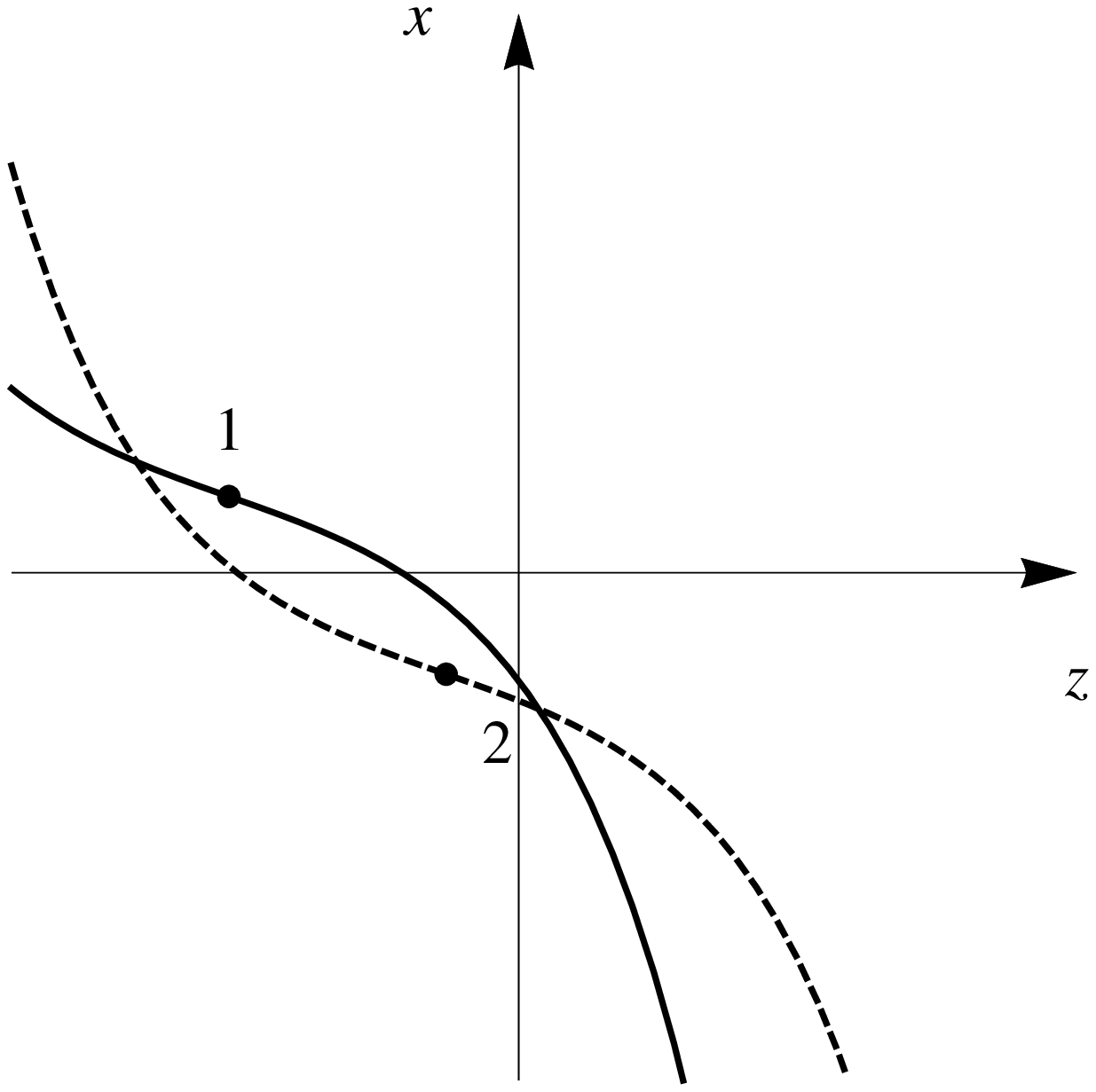}\qquad
        \includegraphics[width=.2\textwidth]{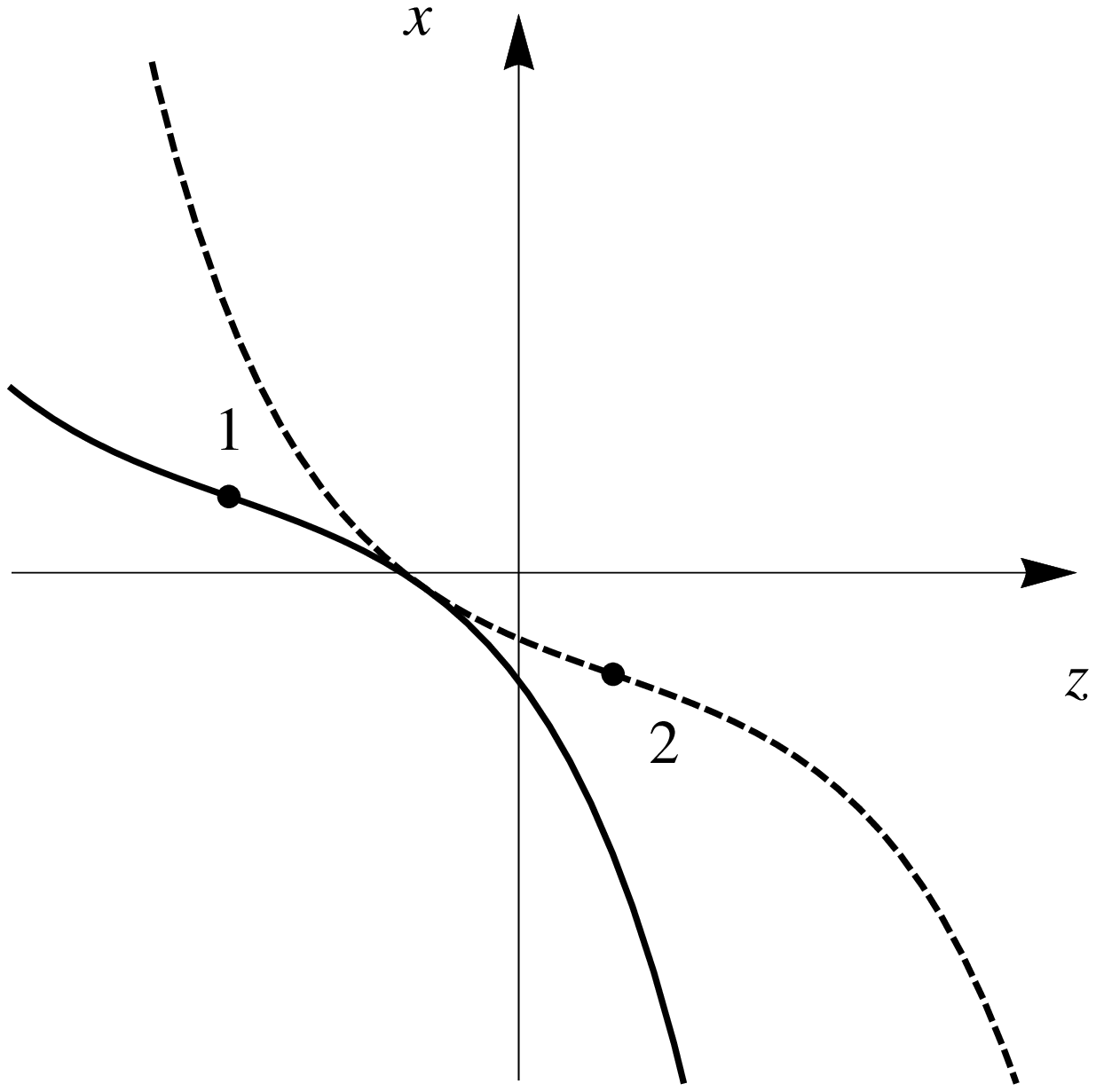}\qquad
        \includegraphics[width=.2\textwidth]{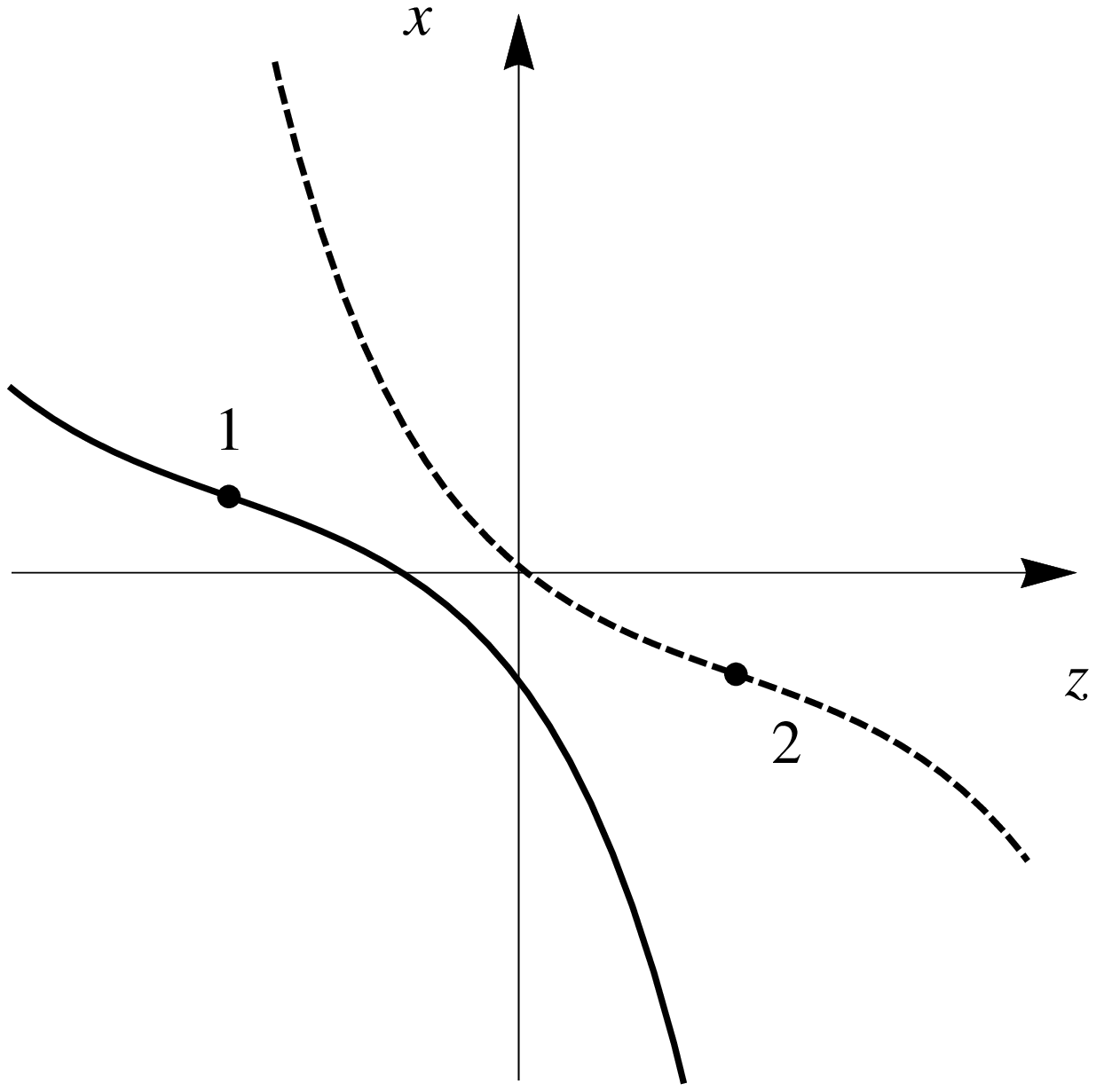}
      \end{psfrags}
      \caption{$\mathcal{S}_1^{u_{*}} \cap \mathcal{S}_1^{u_{**}}$ consists either of two points, one or none.}
\label{fig:L2}
\end{figure}
To prove \ref{L3}, we notice that $\mathcal{S}_1^{u_{*}} \cap \mathcal{S}_1^{u_{**}}$ has at most two elements by \ref{L2}; moreover
\begin{align*}
&u_{**} \in \mathcal{S}_1^{u_{*}}&
&\Leftrightarrow&
&
\nu_{**} = \Xi(\mu_{**}-\mu_{*}) + \nu_{*}&
&\Leftrightarrow&
&
\nu_{*} = \Xi(\mu_{*}-\mu_{**}) + \nu_{**}&
&\Leftrightarrow&
&u_{*} \in \mathcal{S}_1^{u_{**}},
\end{align*}
and then $\mathcal{S}_1^{u_{*}} \cap \mathcal{S}_1^{u_{**}} = \{u_{*}, u_{**}\}$.
To prove \ref{L4}--\ref{L6} it is sufficient to observe that
\begin{align*}
\left(\mathcal{S}_i\right)_\rho(\rho,u_*) &= \frac{q_*}{\rho_*}+(-1)^i \,\frac{a}{2} \left(3\sqrt{\frac{\rho}{\rho_*}}-\sqrt{\frac{\rho_*}{\rho}}\right),&
\left(\mathcal{S}_i\right)_{\rho\rho}(\rho,u_*) &= (-1)^i \, \frac{a}{4\rho} \left(3\sqrt{\frac{\rho}{\rho_*}}+\sqrt{\frac{\rho_*}{\rho}}\right),
\\
\left(\mathcal{R}_i\right)_\rho(\rho,u_*) &= \frac{q_*}{\rho_*}+(-1)^i \, a \left(1+\ln\left(\frac{\rho}{\rho_*}\right)\right),&
\left(\mathcal{R}_i\right)_{\rho\rho}(\rho,u_*) &= (-1)^i \, \frac{a}{\rho}.
\end{align*}
At last, \ref{L7} is clear in the $(\mu,\nu)$-coordinates, see \figurename~\ref{fig:SR}.
\qedhere
\begin{figure}[!htbp]
      \centering
      \begin{psfrags}
      \psfrag{m}[c,c]{$\rho$}
      \psfrag{n}[c,c]{$q$}
      \psfrag{s}[B,c]{$u_*$}
      \psfrag{1}[c,l]{$\mathcal{R}_2^{u_*}$}
      \psfrag{2}[c,l]{$\mathcal{R}_1^{u_*}$}
      \psfrag{3}[c,c]{$\mathcal{S}_2^{u_*}~$}
      \psfrag{4}[c,c]{$\mathcal{S}_1^{u_*}~$}
        \includegraphics[height=.15\textheight]{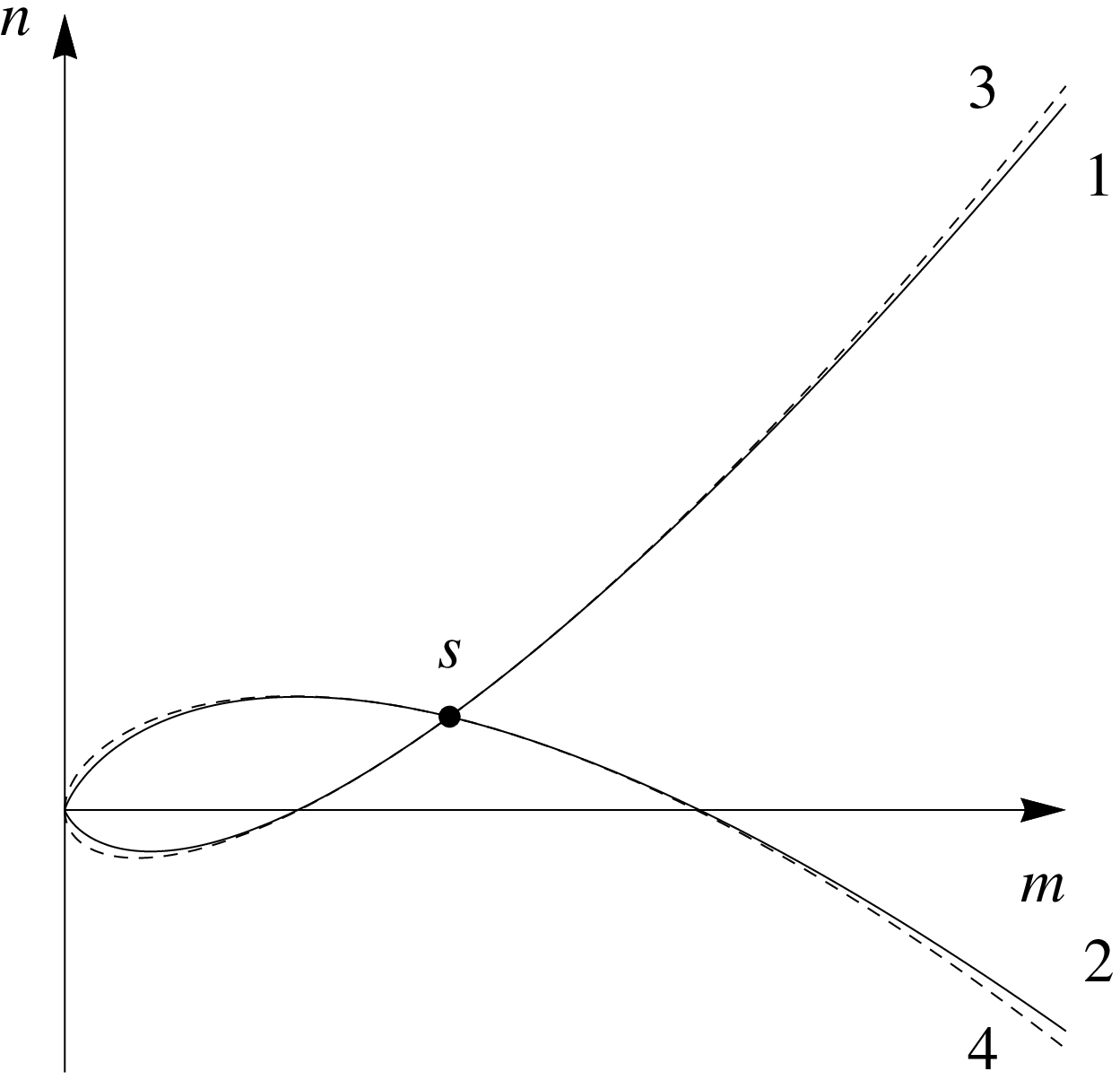}\qquad\qquad
      \psfrag{m}[c,c]{$\mu$}
      \psfrag{n}[c,c]{$\nu$}
      \psfrag{s}[l,c]{$~u_*$}
      \psfrag{1}[l,c]{$\mathcal{R}_2^{u_*}$}
      \psfrag{2}[c,c]{$\mathcal{R}_1^{u_*}\quad$}
      \psfrag{3}[c,b]{$\qquad\mathcal{S}_2^{u_*}$}
      \psfrag{4}[r,b]{$\mathcal{S}_1^{u_*}$}
        \includegraphics[height=.15\textheight]{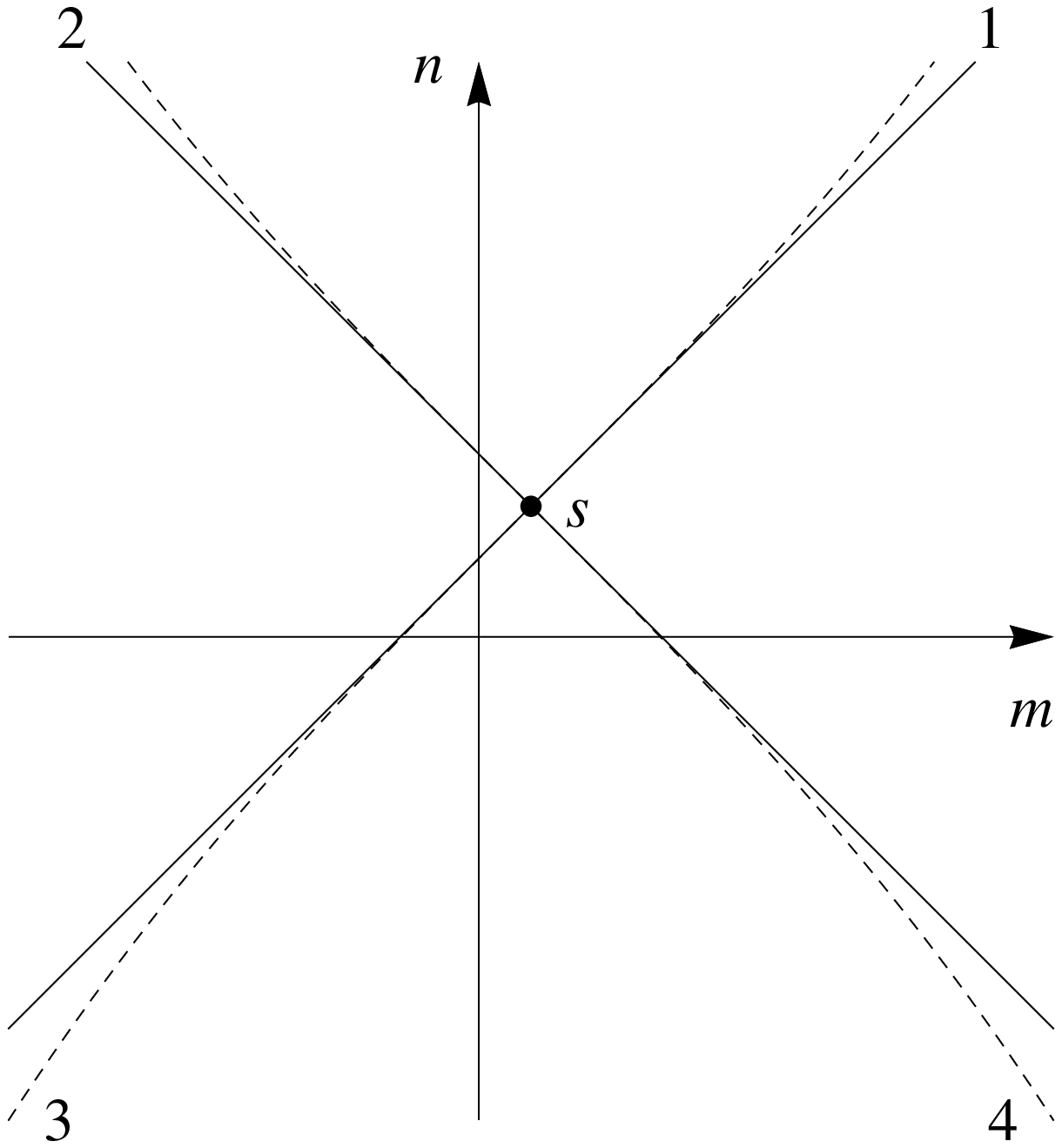}
      \end{psfrags}
      \caption{The dashed and solid curves are $\mathcal{S}_i^{u_*}$ and $\mathcal{R}_i^{u_*}$, respectively.}
\label{fig:SR}
\end{figure}
\end{proof}

\begin{proof}[Proof of Proposition~\ref{prop:rsp}]
Conditions \eqref{ch0} and \eqref{P0} are satisfied because $\mathsf{D}_{\rm p}=\Omega^2$.

About coherence, we prove \eqref{ch1}.
Fix $(u_\ell,u_r) \in \Omega^2$ and $\xi_o \in \R$.
If $u_{\rm p}(\xi_o^-) = u_{\rm p}(\xi_o^+)$, then $\rsp[u_{\rm p}(\xi_o^-), u_{\rm p}(\xi_o^+)]\equiv u_{\rm p}(\xi_o^\pm)$ since $\rsp[u,u]\equiv u$ for any $u\in\Omega$ and it is easy to conclude.
If $u_{\rm p}(\xi_o^-) \neq u_{\rm p}(\xi_o^+)$, namely, $u_{\rm p}$ has a shock at $\xi_o$, then either $u_{\rm p}(\xi_o^-) = u_\ell \neq u_{\rm p}(\xi_o^+) = \tilde{u}$ or $u_{\rm p}(\xi_o^-) = \tilde{u} \neq u_{\rm p}(\xi_o^+) = u_r$.
In the former case $\rho_\ell < \tilde{\rho}$, in the latter $\rho_r < \tilde{\rho}$.
It is then easy to conclude by observing that $\tilde{u}(u_\ell,\tilde{u}) = \tilde{u} = \tilde{u}(\tilde{u},u_r)$.

About consistence, it is sufficient to observe that for any $\xi_o\in\R$ we have
\begin{align*}
\tilde{u}\left(u_\ell,u_{\rm p}(\xi_o)\right) &= \begin{cases}
u_{\rm p}(\xi_o)&\text{if }u_{\rm p}(\xi_o) \in \rsp[u_\ell,\tilde{u}](\R),\\
\tilde{u}&\text{if }u_{\rm p}(\xi_o) \in \rsp[\tilde{u},u_r](\R),
\end{cases}\\
\tilde{u}\left(u_{\rm p}(\xi_o),u_r\right) &= \begin{cases}
\tilde{u}&\text{if }u_{\rm p}(\xi_o) \in \rsp[u_\ell,\tilde{u}](\R),\\
u_{\rm p}(\xi_o)&\text{if }u_{\rm p}(\xi_o) \in \rsp[\tilde{u},u_r](\R),
\end{cases}
\end{align*}
and that $u_{\rm p}$ is the juxtaposition of $\rsp[u_\ell,\tilde{u}]$ and $\rsp[\tilde{u},u_r]$.

At last, the $\Lloc1$-continuity in $\Omega^2$ directly follows from the continuity of $\tilde{u}$, $\sigma$, $\lambda_1$ and $\lambda_2$.
\end{proof}

\subsection{Proof of Theorem~\ref{teo}}\label{sec:proofteo}

We split the proof of Theorem~\ref{teo} into the following propositions.

\begin{proposition}
The coherence domain of $\rsv$ is $\mathsf{CH} = \mathsf{A} \cup \mathsf{O}_{\mathsf{O}}$.
\end{proposition}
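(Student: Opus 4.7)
The plan is to establish the two inclusions $\mathsf{A}\cup\mathsf{O}_{\mathsf{O}}\subseteq\mathsf{CH}$ and $\mathsf{CH}\cap\mathsf{O}_\mathsf{A}=\emptyset$, using the evident partition $\mathsf{D}_{\rm v}=\Omega^2=\mathsf{A}\cup\mathsf{O}_{\mathsf{O}}\cup\mathsf{O}_\mathsf{A}$. Condition \eqref{chv0} is automatic because $\mathsf{D}_{\rm v}=\Omega^2$, so the only thing to verify in each case is \eqref{chv1}.

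First I would treat the inclusion $\mathsf{A}\subseteq\mathsf{CH}$. For $(u_\ell,u_r)\in\mathsf{A}$ the formulas \eqref{eq:rsva}, \eqref{eq:rsvqm} with $q_m=0$ give $u_{\rm v}^-=\hat{u}_\ell$ and $u_{\rm v}^+=\check{u}_r$. The idempotency \eqref{e:idempotent} then yields $\hat{u}(u_{\rm v}^-)=u_{\rm v}^-$, $\check{u}(u_{\rm v}^+)=u_{\rm v}^+$ and, in particular, $\hat{p}(u_{\rm v}^-)=\hat{p}_\ell$ and $\check{p}(u_{\rm v}^+)=\check{p}_r$. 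Hence $|\check{p}(u_{\rm v}^+)-\hat{p}(u_{\rm v}^-)|\le M$, so $(u_{\rm v}^-,u_{\rm v}^+)\in\mathsf{A}$ and $\rsv[u_{\rm v}^-,u_{\rm v}^+]$ is active with $q_m=0$; the outgoing $1$-wave from $u_{\rm v}^-$ to $\hat{u}(u_{\rm v}^-)=u_{\rm v}^-$ and the outgoing $2$-wave from $\check{u}(u_{\rm v}^+)=u_{\rm v}^+$ to $u_{\rm v}^+$ are both trivial, which is precisely \eqref{chv1}. The inclusion $\mathsf{O}_{\mathsf{O}}\subseteq\mathsf{CH}$ is even quicker: by \eqref{eq:rsvo} $u_{\rm v}\equiv u_{\rm p}$, so $(u_{\rm v}^-,u_{\rm v}^+)=(u_{\rm p}^-,u_{\rm p}^+)\in\mathsf{O}$ by the very definition of $\mathsf{O}_{\mathsf{O}}$, hence $\rsv[u_{\rm v}^-,u_{\rm v}^+]=\rsp[u_{\rm p}^-,u_{\rm p}^+]$ and \eqref{chv1} follows from the coherence of $\rsp$ established in Proposition~\ref{prop:rsp}.

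The main obstacle is the reverse inclusion $\mathsf{CH}\cap\mathsf{O}_\mathsf{A}=\emptyset$, which I would prove by contradiction. Assume $(u_\ell,u_r)\in\mathsf{O}_\mathsf{A}\cap\mathsf{CH}$. Since $(u_\ell,u_r)\in\mathsf{O}$ one has $u_{\rm v}^\pm=u_{\rm p}^\pm$, and since $(u_{\rm p}^-,u_{\rm p}^+)\in\mathsf{A}$ the reapplied solver $\rsv[u_{\rm p}^-,u_{\rm p}^+]$ is active with $q_m=0$ and its traces at $\xi=0^\pm$ are $\hat{u}(u_{\rm p}^-)$ and $\check{u}(u_{\rm p}^+)$. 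Condition \eqref{chv1} then forces $\hat{u}(u_{\rm p}^-)=u_{\rm p}^-$ and $\check{u}(u_{\rm p}^+)=u_{\rm p}^+$, whence $q_{\rm p}^-=q_{\rm p}^+=0$. If $u_{\rm p}^-\neq u_{\rm p}^+$, then the stationary discontinuity at $x=0$ carries zero flux on both sides and the second Rankine--Hugoniot condition \eqref{eq:RH2} reduces to $a^2(\rho_{\rm p}^+-\rho_{\rm p}^-)=0$, contradicting $u_{\rm p}^-\neq u_{\rm p}^+$. Therefore $u_{\rm p}^-=u_{\rm p}^+$.

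It remains to run a short case analysis on the Lax wave pattern to conclude that this common value, having $q=0$, forces $\tilde{q}=0$: if both waves lie in $\{\xi>0\}$ then $u_{\rm p}^\pm=u_\ell$ with $q_\ell=0$, which is compatible only with a trivial $1$-wave, so $\tilde{u}=u_\ell$ and $\tilde{q}=0$; symmetrically if both waves lie in $\{\xi<0\}$; and in the remaining case $u_{\rm p}^\pm=\tilde{u}$ directly. By the characterization $\mathsf{A}_\mathsf{N}=\{\tilde{q}=0\}$ displayed just before Theorem~\ref{teo} this gives $(u_\ell,u_r)\in\mathsf{A}_\mathsf{N}\subseteq\mathsf{A}$, contradicting $(u_\ell,u_r)\in\mathsf{O}$. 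The delicate point is precisely this last paragraph, where one has to coordinate the Rankine--Hugoniot rigidity for stationary discontinuities with all possible positions of the traces of $u_{\rm p}$ along the Lax fan.
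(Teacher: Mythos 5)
Your proof is correct and follows essentially the same route as the paper's: \eqref{chv0} is automatic, $\mathsf{A}\subseteq\mathsf{CH}$ via the idempotency \eqref{e:idempotent}, $\mathsf{O}_{\mathsf{O}}\subseteq\mathsf{CH}$ via the coherence of $\rsp$, and $\mathsf{O}_\mathsf{A}\cap\mathsf{CH}=\emptyset$ by the same contradiction (zero traces at $\xi=0$, no stationary shock by \eqref{eq:RH2}, hence $\tilde{q}=0$ and $(u_\ell,u_r)\in\mathsf{A}_\mathsf{N}$, against $(u_\ell,u_r)\in\mathsf{O}$). The only micro-point is that your ``remaining case'' should also dispose of $\xi=0$ lying inside a rarefaction fan, which is immediate since there $|v|=a$ forces $q_{\rm p}(0)\neq0$, contradicting $q_{\rm p}(0^\pm)=0$; the paper elides this at the same level of detail (``it is not difficult to see'').
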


\begin{proof}
Condition \eqref{chv0} holds true in $\Omega^2$ because $\mathsf{D}_{\rm v}=\Omega^2$; therefore, we are left to consider \eqref{chv1}.

First, we prove that if $(u_\ell,u_r) \in \mathsf{A} \cup \mathsf{O}_{\mathsf{O}}$, then \eqref{chv1} holds.
Assume that $(u_\ell,u_r) \in \mathsf{A}$. 
In this case $u_{\rm v}^- = \hat{u}_\ell$ and $u_{\rm v}^+ = \check{u}_r$.
By \eqref{e:idempotent} we have $\hat{u}(u_{\rm v}^-) = u_{\rm v}^- = \hat{u}_\ell$ and $\check{u}(u_{\rm v}^+) = u_{\rm v}^+ = \check{u}_r$; therefore $(u_{\rm v}^-,u_{\rm v}^+) \in \mathsf{A}$, whence \eqref{chv1} holds true.
If $(u_\ell,u_r) \in \mathsf{O}_{\mathsf{O}}$, then it is sufficient to exploit the coherence of $\rsp$.

Second, we prove that if $(u_\ell,u_r) \in \mathsf{O}_\mathsf{A}$ then \eqref{chv1} fails. 
Since $(u_\ell,u_r) \in \mathsf{O}$, then $u_{\rm v} \equiv u_{\rm p} $, whence $u_{\rm v}^\pm = u_{\rm p}^\pm$; since $(u_{\rm p}^-,u_{\rm p}^+) \in \mathsf{A}$, then by \eqref{PR1} it follows
\[
\rsv\left[u_{\rm p}^-,u_{\rm p}^+\right](\xi) = 
\begin{cases}
\rsp\left[u_{\rm p}^-,\hat{u}\left(u_{\rm p}^-\right)\right](\xi)&\text{if }\xi<0,
\\[10pt]
\rsp\left[\check{u}\left(u_{\rm p}^+\right), u_{\rm p}^+\right](\xi)&\text{if }\xi\ge0.
\end{cases}
\]
Now, if by contradiction \eqref{chv1} holds, then we have 
\begin{align*}
&\rsp\left[u_{\rm p}^-,\hat{u}\left(u_{\rm p}^-\right)\right] \equiv u_{\rm p}^- &&\text{ in }&& (-\infty,0),
\\
&\rsp\left[\check{u}\left(u_{\rm p}^+\right), u_{\rm p}^+\right] \equiv u_{\rm p}^+ &&\text{ in }&& [0,\infty).
\end{align*}
It follows that $u_{\rm p}^- = \hat{u}(u_{\rm p}^-)$ and $u_{\rm p}^+ = \check{u}(u_{\rm p}^+)$; then $q_{\rm p}(0^\pm)=0$, whence $u_{\rm p}^- = u_{\rm p}^+$ because $u_{\rm p}$ cannot perform a stationary shock between states with zero flow by \eqref{eq:RH2}.
Then it is not difficult to see that $\tilde{u} = u_{\rm p}(0)$, whence $\tilde{q} = 0$ and therefore $\hat{u}_\ell = \tilde{u} = \check{u}_r$.
This contradicts the assumption $(u_\ell,u_r) \in \mathsf{O}$, that is $| \check{p}_r - \hat{p}_\ell | > M$.
\end{proof}

\begin{proposition}
The consistence domain of $\rsv$ is $\mathsf{CN}=\mathsf{CN}_1\cup\mathsf{CN}_2$.
\end{proposition}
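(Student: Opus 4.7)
The plan is to derive the statement as a direct specialization of Proposition~\ref{pro:consist} to the present two-way electronic pressure valve, in which $q_m\equiv0$ and $\mathsf{D}_{\rm v} = \Omega^2$, so that \eqref{Pv0} is automatically satisfied; the whole argument then amounts to analyzing \eqref{Pv1} on the two blocks of the partition $\Omega^2 = \mathsf{A}_\mathsf{I} \sqcup \mathsf{A}_\mathsf{I}^{\scriptscriptstyle\complement}$.

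If $(u_\ell,u_r) \in \mathsf{A}_\mathsf{I}^{\scriptscriptstyle\complement}$, then $u_{\rm v}\equiv u_{\rm p}$, and the second branch of \eqref{Pv1} reads, for every $\xi_o\in\R$, $(u_\ell,u_{\rm v}(\xi_o)), (u_{\rm v}(\xi_o),u_r) \in \mathsf{A}_\mathsf{I}^{\scriptscriptstyle\complement}$, which is precisely the definition of $\mathsf{CN}_2$. If instead $(u_\ell,u_r) \in \mathsf{A}_\mathsf{I}$, the first branch of \eqref{Pv1} imposes, for $\xi_o<0$, both $(u_\ell,u_{\rm v}(\xi_o)) \in \mathsf{A}_\mathsf{I}^{\scriptscriptstyle\complement}$ and $\hat{u}(u_{\rm v}(\xi_o)) = \hat{u}_\ell$, and, for $\xi_o\ge0$, both $(u_{\rm v}(\xi_o),u_r) \in \mathsf{A}_\mathsf{I}^{\scriptscriptstyle\complement}$ and $\check{u}(u_{\rm v}(\xi_o)) = \check{u}_r$. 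The central step is to show that in the present case the two equalities involving $\hat{u}$ and $\check{u}$ are \emph{automatic}, so that what remains reproduces exactly the definition of $\mathsf{CN}_1$.

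For $\xi_o<0$ the state $u_{\rm v}(\xi_o)$ lies on the single 1-wave $\rsp[u_\ell,\hat{u}_\ell]$; by \eqref{eq:hat0} this wave is a shock when $v_\ell>0$ and a rarefaction when $v_\ell\le0$. In the shock case $u_{\rm v}(\xi_o) \in \{u_\ell,\hat{u}_\ell\}$ and the equality follows from the idempotence \eqref{e:idempotent}. In the rarefaction case $u_{\rm v}(\xi_o) \in \mathcal{R}_1^{u_\ell}$, so the Riemann invariant $\mu+\nu$ equals $\mu_\ell+\nu_\ell$ at $u_{\rm v}(\xi_o)$, and the value $\nu(\xi_o) \in [\nu_\ell,0]$ places $u_{\rm v}(\xi_o)$ on the branch $\nu\le0$ of \eqref{eq:hat0}, yielding $\hat{\mu}(u_{\rm v}(\xi_o)) = \mu(\xi_o)+\nu(\xi_o) = \mu_\ell+\nu_\ell = \hat{\mu}_\ell$ and $\hat{\nu}(u_{\rm v}(\xi_o)) = 0 = \hat{\nu}_\ell$. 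A symmetric argument on the 2-wave $\rsp[\check{u}_r,u_r]$ via \eqref{eq:check0} handles $\check{u}(u_{\rm v}(\xi_o)) = \check{u}_r$ for $\xi_o\ge0$. The main obstacle is this case split in the rarefaction regime, where one has to combine property~\ref{L1} (alignment of rarefaction curves) with the sign of $\nu$ along the wave to pick the correct branch of the explicit formulas \eqref{eq:hat0}, \eqref{eq:check0}; once this is carried out, the statement follows immediately from Proposition~\ref{pro:consist}.
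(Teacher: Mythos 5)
Your proposal is correct and follows essentially the same route as the paper: it specializes Proposition~\ref{pro:consist} (with $\mathsf{D}_{\rm v}=\Omega^2$ making \eqref{Pv0} vacuous), identifies the $\mathsf{A}_\mathsf{I}^{\scriptscriptstyle\complement}$ branch with $\mathsf{CN}_2$, and shows the equalities $\hat{u}(u_{\rm v}(\xi_o))=\hat{u}_\ell$ and $\check{u}(u_{\rm v}(\xi_o))=\check{u}_r$ are automatic by splitting the single 1-wave $\rsp[u_\ell,\hat{u}_\ell]$ into the shock case (handled by idempotence \eqref{e:idempotent}) and the rarefaction case (handled by constancy of the Riemann invariant along $\mathcal{R}_1^{u_\ell}$ together with the $\nu\le0$ branch of \eqref{eq:hat0}). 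Your case split on the sign of $v_\ell$ is the same as the paper's split on the sign of $q_\ell$, and your explicit branch argument merely spells out the step the paper states as ``as a consequence''.
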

\begin{proof}
Since $\mathsf{D}_{\rm v} = \Omega^2$, we have that $\mathsf{CN}=\mathsf{CN}_1'\cup\mathsf{CN}_2'$, where
\begin{align*}
\mathsf{CN}_1' &\doteq \left\{(u_\ell,u_r)\in\mathsf{A}_\mathsf{I}\colon(u_\ell,u_r)\text{ satisfies \eqref{Pv1}}\right\},&
\mathsf{CN}_2' &= \left\{(u_\ell,u_r)\in\mathsf{A}_\mathsf{I}^{\scriptscriptstyle\complement}\colon(u_\ell,u_r)\text{ satisfies \eqref{Pv1}}\right\}.
\end{align*}
By Proposition~\ref{pro:consist} we have
\begin{align*}
\mathsf{CN}_1' &\doteq \left\{(u_\ell,u_r)\in\mathsf{A}_\mathsf{I}\colon 
\begin{cases}
\left(u_\ell,u_{\rm v}(\xi_o)\right) \in \mathsf{A}_\mathsf{I}^{\scriptscriptstyle\complement} 
\text{ and }\hat{u}\left(u_{\rm v}(\xi_o)\right) = \hat{u}_\ell,
&\text{for any }\xi_o<0
\\
\left(u_{\rm v}(\xi_o),u_r\right) \in \mathsf{A}_\mathsf{I}^{\scriptscriptstyle\complement} 
\text{ and }\check{u}\left(u_{\rm v}(\xi_o)\right)=\check{u}_r,
&\text{for any }\xi_o\ge0
\end{cases}
\right\},\\
\mathsf{CN}_2' &= \left\{(u_\ell,u_r) \in \mathsf{A}_\mathsf{I}^{\scriptscriptstyle\complement} \colon 
\left(u_\ell,u_{\rm v}(\xi_o)\right),
\left(u_{\rm v}(\xi_o),u_r\right) \in \mathsf{A}_\mathsf{I}^{\scriptscriptstyle\complement},
\text{ for any }\xi_o\in\R\right\}.
\end{align*}
Clearly $\mathsf{CN}_2' = \mathsf{CN}_2$ and $\mathsf{CN}_1' \subseteq \mathsf{CN}_1$.
Hence, we are left to prove that $\mathsf{CN}_1' \supseteq \mathsf{CN}_1$.
Let $(u_\ell,u_r) \in \mathsf{CN}_1$.
If $\xi_o<0$ (the case $\xi_o\ge0$ is analogous), then
\begin{align*}
&q_\ell\ge 0&
&\Rightarrow&
&\hat{u}_\ell \in \mathcal{S}_1^{u_\ell}&
&\Rightarrow&
&u_{\rm v}(\xi_o) \in \left\{u_\ell, \hat{u}_\ell\right\},
\\
&q_\ell< 0&
&\Rightarrow&
&\hat{u}_\ell \in \mathcal{R}_1^{u_\ell}&
&\Rightarrow&
&q_{\rm v}(\xi_o)\in[q_\ell, 0],\ \mathcal{R}_1^{u_{\rm v}(\xi_o)} = \mathcal{R}_1^{u_\ell}.
\end{align*} 
As a consequence $\hat{u}(u_{\rm v}(\xi_o)) = \hat{u}_\ell$, therefore $(u_\ell,u_r) \in \mathsf{CN}_1'$.
\end{proof}

\begin{proposition}
The consistence domain of $\rsv$ is $\mathsf{CN} = \mathsf{CN}_\mathsf{O} \cup \mathsf{CN}_\mathsf{A}$.
\end{proposition}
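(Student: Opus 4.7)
Plan. By the preceding proposition $\mathsf{CN}=\mathsf{CN}_1\cup\mathsf{CN}_2$, so the task reduces to showing $\mathsf{CN}_1\cup\mathsf{CN}_2=\mathsf{CN}_\mathsf{A}\cup\mathsf{CN}_\mathsf{O}$. Since $\mathsf{CN}_\mathsf{A}\subseteq\mathsf{A}$ and $\mathsf{CN}_\mathsf{O}\subseteq\mathsf{O}$ are disjoint and $\mathsf{D}_{\rm v}=\mathsf{A}\cup\mathsf{O}$, I would argue along this partition, proving separately $\mathsf{CN}\cap\mathsf{A}=\mathsf{CN}_\mathsf{A}$ and $\mathsf{CN}\cap\mathsf{O}=\mathsf{CN}_\mathsf{O}$.

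For $(u_\ell,u_r)\in\mathsf{A}$, the solution $u_{\rm v}$ consists of a single 1-wave from $u_\ell$ to $\hat u_\ell$ for $\xi<0$ and a single 2-wave from $\check u_r$ to $u_r$ for $\xi\ge0$; by \eqref{eq:hat0} the left wave is a 1-shock (or trivial) when $q_\ell\ge0$ and a proper 1-rarefaction when $q_\ell<0$. In the latter case, setting $u^*=u_{\rm v}(\xi_o^-)\in\mathcal{R}_1^{u_\ell}$ gives $\tilde u(u_\ell,u^*)=u^*$ with $\tilde q=q^*\in(q_\ell,0)$, while by \eqref{e:idempotent} $\check u(u^*)\to\check u(\hat u_\ell)=\hat u_\ell$ as $u^*\to\hat u_\ell$; hence $|\check p(u^*)-\hat p_\ell|$ drops continuously from $|\check p_\ell-\hat p_\ell|$ down to $0$ and enters $[0,M]$, placing some intermediate $(u_\ell,u^*)$ in $\mathsf{A}_\mathsf{I}$ and violating \eqref{Pv1}. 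Hence $q_\ell\ge0$, and symmetrically $q_r\le0$. Under these signs $u_{\rm v}$ takes values only in $\{u_\ell,\hat u_\ell,\check u_r,u_r\}$; by \eqref{e:idempotent} the pairs $(u_\ell,\hat u_\ell)$ and $(\check u_r,u_r)$ lie in $\mathsf{A}_\mathsf{N}\subseteq\mathsf{A}_\mathsf{I}^{\scriptscriptstyle\complement}$ automatically, and the preservation identities in \eqref{Pv1} are also automatic. We are thus left precisely with the conditions $(u_\ell,u_\ell),(u_r,u_r)\in\mathsf{A}_\mathsf{I}^{\scriptscriptstyle\complement}$, which together with $q_\ell\ge0\ge q_r$ characterize $\mathsf{CN}_\mathsf{A}$.

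For $(u_\ell,u_r)\in\mathsf{O}$, we have $u_{\rm v}\equiv u_{\rm p}$ with $\tilde q\ne0$ and two (possibly trivial) waves, from $u_\ell$ to $\tilde u$ and from $\tilde u$ to $u_r$. Evaluating \eqref{Pv1} in each of the three constant regions of $u_{\rm p}$ produces the conditions $(u_\ell,u_\ell),(u_r,u_r),(u_\ell,\tilde u),(\tilde u,u_r)\in\mathsf{A}_\mathsf{I}^{\scriptscriptstyle\complement}$. For $\xi_o$ inside a 1-rarefaction with $u^*\in\mathcal{R}_1^{u_\ell}$, the identity $\mathcal{R}_1^{u^*}=\mathcal{R}_1^{u_\ell}$ yields $\tilde u(u_\ell,u^*)=u^*=\tilde u(u^*,u_r)$, so the requirement becomes $(u_\ell,u^*),(u^*,u_r)\in\mathsf{A}_\mathsf{I}^{\scriptscriptstyle\complement}$; if the rarefaction crosses $q=0$ then $u^*=\hat u_\ell$ at the crossing and nearby $\check p(u^*)-\hat p_\ell\to0$ while $\tilde q=q^*\ne0$, forcing $q_{\rm p}\ne0$ along every rarefaction. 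Conversely, assuming the endpoint conditions and no crossing, formulas \eqref{eq:hat0}, \eqref{eq:check0} show that on $\mathcal{R}_1^{u_\ell}$ the maps $u^*\mapsto\check\mu(u^*)$ and $u^*\mapsto\hat\mu(u^*)$ are strictly monotone in $\mu^*$ inside each sign regime $\nu^*\gtrless0$, and the relation $\Xi^{-1}(t)=t\Leftrightarrow t=0$ forces $\check p(u^*)\ne\hat p_\ell$ and $\hat p(u^*)\ne\check p_r$ throughout; monotonicity of $|\cdot|$ then transports the endpoint inequalities to all interior points. The 2-rarefaction on the right is handled symmetrically, yielding $\mathsf{CN}_\mathsf{O}$. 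The main difficulty lies in this final monotonicity/non-vanishing analysis, which collapses the continuum of conditions in $\mathsf{CN}_2$ to the finitely many endpoint conditions in $\mathsf{CN}_\mathsf{O}$ and requires careful case-splitting on the signs of $\nu_\ell$ and $\tilde\nu$.
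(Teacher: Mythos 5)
Your overall route coincides with the paper's: you reduce to the characterization $\mathsf{CN}=\mathsf{CN}_1\cup\mathsf{CN}_2$, split along $\mathsf{D}_{\rm v}=\mathsf{A}\cup\mathsf{O}$, rule out $q_\ell<0$ (and symmetrically $q_r>0$) in the $\mathsf{A}$-case by the continuity argument near the zero-flow end state of the 1-rarefaction, reduce the remaining $\mathsf{A}$-case to the finitely many trace pairs $(u_\ell,u_\ell),(u_\ell,\hat{u}_\ell),(\check{u}_r,u_r),(u_r,u_r)$, and in the $\mathsf{O}$-case obtain necessity of the four endpoint conditions and of $q_{\rm p}\ne0$ along rarefactions again by continuity, and sufficiency by monotonicity of $\hat{p},\check{p}$ along rarefactions. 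The $\mathsf{A}$-part and the necessity half of the $\mathsf{O}$-part are sound and essentially identical to the paper's proof.

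Two points in the $\mathsf{O}$-part need repair. First, for $u^*$ on the 1-rarefaction the identity $\tilde{u}(u^*,u_r)=u^*$ is false: since $\tilde{u}\in\mathcal{FL}_1^{u^*}\cap\mathcal{BL}_2^{u_r}$ one has $\tilde{u}(u^*,u_r)=\tilde{u}$. This is harmless where you actually use it (the requirement $(u_\ell,u^*),(u^*,u_r)\in\mathsf{A}_\mathsf{I}^{\scriptscriptstyle\complement}$ comes directly from $\mathsf{CN}_2$), but it gives the wrong criterion for $(u^*,u_r)\in\mathsf{A}_\mathsf{N}$, so do not rely on it. Second, and more substantively, the final sufficiency step is not justified as written. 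Minor issue: $\hat{\mu}$ is constant, not strictly monotone, along $\mathcal{R}_1^{u_\ell}$ in the regime $\nu\le0$ (and $\check{\mu}$ along $\mathcal{R}_2^{u_r}$ for $\nu\ge0$); weak monotonicity suffices, so this only needs rewording. The real gap is that the non-vanishing of $\hat{p}(u^*)-\check{p}_r$ cannot follow from $\Xi^{-1}(t)=t\Leftrightarrow t=0$: that relation compares $\hat{p}$ and $\check{p}$ at states on a common curve, while $u^*$ and $u_r$ lie on no common Lax curve. Monotonicity of $\hat{p}(u^*)$ together with the two endpoint bounds $|\hat{p}_\ell-\check{p}_r|>M$ and $|\hat{p}(\tilde{u})-\check{p}_r|>M$ would, by themselves, still permit the two endpoint differences to have opposite signs, in which case intermediate values of $|\hat{p}(u^*)-\check{p}_r|$ drop below $M$ and consistence fails. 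What closes the argument is the sign information: since $\tilde{u}\in\mathcal{BL}_2^{u_r}$ and this curve is strictly increasing in the $(\mu,\nu)$-plane, $\tilde{\nu}>0$ forces $\check{\mu}_r<\tilde{\mu}<\hat{\mu}(\tilde{u})$, hence $\hat{p}(u^*)-\check{p}_r\ge\hat{p}(\tilde{u})-\check{p}_r>M$ with the correct sign (symmetrically for $\tilde{\nu}<0$, and for the pairs $(u_\ell,u^*)$ the binding endpoint is $u_\ell$ or $\tilde{u}$ according to the sign of $q$ on the rarefaction). This is exactly the inequality chain the paper uses; once you insert it, your proof matches the paper's.
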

\begin{proof}
It is sufficient to prove that $\mathsf{CN} \cap \mathsf{A} = \mathsf{CN}_\mathsf{A}$ and $\mathsf{CN} \cap \mathsf{O} = \mathsf{CN}_\mathsf{O}$.
In the following we use Proposition~\ref{pro:consist} several times without any explicit mention.

We first prove that $\mathsf{CN} \cap \mathsf{A} = \mathsf{CN}_\mathsf{A}$.
Clearly $\mathsf{CN}_\mathsf{A} = \bigcup_{i=1}^4 \mathsf{CN}_\mathsf{A}^i$, where
\begin{align*}
\mathsf{CN}_\mathsf{A}^1&\doteq \left\{(u_\ell,u_r) \in \mathsf{A} \colon q_\ell > 0 > q_r,\ (u_\ell,u_\ell) \in \mathsf{O},\ (u_r,u_r) \in \mathsf{O} \right\}
\\&= \left\{(u_\ell,u_r) \in \mathsf{A} \colon \min\left\{\hat{p}_\ell-\check{p}_\ell, \check{p}_r-\hat{p}_r \right\} > M\right\},\\
\mathsf{CN}_\mathsf{A}^2&\doteq \left\{(u_\ell,u_r) \in \mathsf{A} \colon q_\ell = 0 > q_r,\ (u_r,u_r) \in \mathsf{O}\right\}
= \left\{(u_\ell,u_r) \in \mathsf{A} \colon q_\ell=0,\ \check{p}_r-\hat{p}_r > M\right\},\\
\mathsf{CN}_\mathsf{A}^3&\doteq \left\{(u_\ell,u_r) \in \mathsf{A} \colon q_\ell > 0 = q_r,\ (u_\ell,u_\ell) \in \mathsf{O}\right\}
= \left\{(u_\ell,u_r) \in \mathsf{A} \colon \hat{p}_\ell-\check{p}_\ell > M,\ q_r=0\right\},\\
\mathsf{CN}_\mathsf{A}^4&\doteq \left\{(u_\ell,u_r) \in \mathsf{A} \colon q_\ell = 0 = q_r\right\}.
\end{align*}
$\mathsf{CN}_\mathsf{A}^1$:~We prove that $(u_\ell,u_r) \in \mathsf{A}$ with $q_\ell>0>q_r$ belongs to $\mathsf{CN}$ if and only if $(u_\ell,u_\ell)$, $(u_r,u_r) \in \mathsf{O}$.\smallskip
\\
$\bullet$~If $(u_\ell,u_r) \in \mathsf{A}_\mathsf{I}$, then $u_{\rm v}$ performs two shocks and an under-compressive shock, hence $(u_\ell,u_{\rm v}(\xi_o^-))$, $(u_{\rm v}(\xi_o^+),u_r) \in \{(u_\ell,u_\ell),(u_\ell,\hat{u}_\ell), (\check{u}_r,u_r), (u_r,u_r)\}$ for any $\xi_o^-<0\le\xi_o^+$.
Obviously $(u_\ell,\hat{u}_\ell)$, $(\check{u}_r,u_r) \in \mathsf{A}_\mathsf{N}$ and $(u_\ell,u_\ell)$, $(u_r,u_r) \not\in \mathsf{A}_\mathsf{N}$.
Therefore $(u_\ell,u_r) \in \mathsf{CN}_1$ if and only if $(u_\ell,u_\ell)$, $(u_r,u_r) \in \mathsf{O}$.\smallskip
\\
$\bullet$~If $(u_\ell,u_r) \in \mathsf{A}_\mathsf{N}$, then $u_{\rm v}$ coincides with $u_{\rm p}$ and performs two shocks, hence $(u_\ell,u_{\rm v}(\xi_o))$, $(u_{\rm v}(\xi_o),u_r) \in \{(u_\ell,u_\ell),(u_\ell,\tilde{u}), (u_\ell,u_r), (\tilde{u},u_r), (u_r,u_r)\}$ for any $\xi_o\in\R$.
Since $\hat{u}_\ell=\tilde{u} = \check{u}_r$, we have $(u_\ell,\tilde{u})$, $(\tilde{u},u_r) \in \mathsf{A}_\mathsf{N}$; moreover by assumption $(u_\ell,u_\ell)$, $(u_r,u_r) \not\in \mathsf{A}_\mathsf{N}$ and $(u_\ell,u_r) \in \mathsf{A}_\mathsf{N}$.
Therefore $(u_\ell,u_r) \in \mathsf{CN}_2$ if and only if $(u_\ell,u_\ell)$, $(u_r,u_r) \in \mathsf{O}$.\medskip
\\$\mathsf{CN}_\mathsf{A}^2$:~We prove that $(u_\ell,u_r) \in \mathsf{A}$ with $q_\ell=0>q_r$ belongs to $\mathsf{CN}$ if and only if $(u_r,u_r) \in \mathsf{O}$.\smallskip
\\
$\bullet$~If $(u_\ell,u_r) \in \mathsf{A}_\mathsf{I}$, then $u_{\rm v}$ performs an under-compressive shock and a 2-shock, hence $(u_\ell,u_{\rm v}(\xi_o^-))$, $(u_{\rm v}(\xi_o^+),u_r) \in \{(u_\ell,u_\ell), (\check{u}_r,u_r), (u_r,u_r)\}$ for any $\xi_o^-<0\le\xi_o^+$.
Obviously $(u_\ell,u_\ell)$, $(\check{u}_r,u_r) \in \mathsf{A}_\mathsf{N}$ and $(u_r,u_r) \not\in \mathsf{A}_\mathsf{N}$.
Therefore $(u_\ell,u_r) \in \mathsf{CN}_1$ if and only if $(u_r,u_r) \in \mathsf{O}$.\smallskip
\\
$\bullet$~If $(u_\ell,u_r) \in \mathsf{A}_\mathsf{N}$, then $u_{\rm v}$ coincides with $u_{\rm p}$ and performs a 2-shocks, hence $(u_\ell,u_{\rm v}(\xi_o))$, $(u_{\rm v}(\xi_o),u_r) \in \{(u_\ell,u_\ell), (u_\ell,u_r), (u_r,u_r)\}$ for any $\xi_o\in\R$.
By assumption $(u_\ell,u_\ell)$, $(u_\ell,u_r) \in \mathsf{A}_\mathsf{N}$ and $(u_r,u_r) \not\in \mathsf{A}_\mathsf{N}$.
Therefore $(u_\ell,u_r) \in \mathsf{CN}_2$ if and only if $(u_r,u_r) \in \mathsf{O}$.\medskip
\\
$\mathsf{CN}_\mathsf{A}^3$:~Analogously to the previous item, it is possible to prove that $(u_\ell,u_r) \in \mathsf{A}$ with $q_\ell>0=q_r$ belongs to $\mathsf{CN}$ if and only if $(u_\ell,u_\ell) \in \mathsf{O}$.\medskip
\\
$\mathsf{CN}_\mathsf{A}^4$:~We prove that any $(u_\ell,u_r) \in \mathsf{A}$ with $q_\ell=0=q_r$ belongs to $\mathsf{CN}$.\smallskip
\\
$\bullet$~If $(u_\ell,u_r) \in \mathsf{A}_\mathsf{I}$, then $u_{\rm v}$ performs an under-compressive shock, hence $(u_\ell,u_{\rm v}(\xi_o^-))$, $(u_{\rm v}(\xi_o^+),u_r) \in \{(u_\ell,u_\ell), (u_r,u_r)\}$ for any $\xi_o^-<0\le\xi_o^+$.
Obviously $(u_\ell,u_\ell)$, $(u_r,u_r) \in \mathsf{A}_\mathsf{N}$ and therefore $(u_\ell,u_r) \in \mathsf{CN}_1$.\smallskip
\\
$\bullet$~If $(u_\ell,u_r) \in \mathsf{A}_\mathsf{N}$, then $u_\ell=u_r$ and $u_{\rm v}\equiv u_{\rm p} \equiv u_\ell$, hence $(u_\ell,u_{\rm v}(\xi_o))$, $(u_{\rm v}(\xi_o),u_r) \in \{(u_\ell,u_r)\}$ for any $\xi_o\in\R$.
By assumption $(u_\ell,u_r) \in \mathsf{A}_\mathsf{N}$ and therefore $(u_\ell,u_r) \in \mathsf{CN}_2$.

\medskip
To complete the proof that $\mathsf{CN} \cap \mathsf{A} = \mathsf{CN}_\mathsf{A}$ it remains to prove that $\mathsf{CN} \cap \{(u_\ell,u_r) \in \mathsf{A} \colon q_\ell < 0 \text{ or } q_r>0 \} = \emptyset$.
Assume by contradiction that there exists $(u_\ell,u_r) \in \mathsf{A} \cap \mathsf{CN}$ with $q_\ell<0$. 
Then $u_{\rm v}$ performs a 1-rarefaction $(u_\ell,\hat{u}_\ell)$.
Clearly $u_{\rm v}(\xi_o) = \hat{u}_\ell$ with $\xi_o \doteq \lambda_1(\hat{u}_\ell) < 0$ and $\hat{p}_\ell = \check{p}(u_{\rm v}(\xi_o))$.
Hence there exists $\varepsilon>0$ sufficiently small such that $0 < \check{p}(u_{\rm v}(\xi_o-\varepsilon)) - \hat{p}_\ell < M$, namely $(u_\ell,u_{\rm v}(\xi_o-\varepsilon)) \in \mathsf{A}_\mathsf{I}$.
On the other hand $(u_\ell,u_r) \in \mathsf{A} \cap \mathsf{CN} \subset \mathsf{CN}_1 \cup \mathsf{CN}_2$ implies that $(u_\ell,u_{\rm v}(\xi)) \in \mathsf{A}_\mathsf{I}^{\scriptscriptstyle\complement}$ for any $\xi<0$, a contradiction.
The case $q_r>0$ is dealt analogously.
\medskip\\
We now prove that
\[\mathsf{CN}_2\cap\mathsf{O} = \mathsf{CN}_\mathsf{O} \doteq \left\{(u_\ell,u_r) \in \mathsf{O} \colon 
\begin{array}{@{}l@{}}
(u_\ell,u_\ell), (u_r,u_r), (u_\ell,\tilde{u}), (\tilde{u},u_r) \in \mathsf{A}_\mathsf{I}^{\scriptscriptstyle\complement}
\\
\text{ and }q_{\rm p}\ne0\text{ along any rarefaction}
\end{array}\right\}.\]
``$\subseteq$''~Let $(u_\ell,u_r) \in \mathsf{CN}_2\cap\mathsf{O}$.
By definition of $\mathsf{CN}_2$ we have $(u_\ell,u_{\rm p}(\xi_o))$,
$(u_{\rm p}(\xi_o),u_r) \in \mathsf{A}_\mathsf{I}^{\scriptscriptstyle\complement}$, for any $\xi_o\in\R$, because $u_{\rm v}\equiv u_{\rm p}$.
As a consequence $(u_\ell,u_\ell)$, $(u_r,u_r)$, $(u_\ell,\tilde{u})$, $(\tilde{u},u_r) \in \mathsf{A}_\mathsf{I}^{\scriptscriptstyle\complement}$.
Assume by contradiction that $u_{\rm p}$ has a 1-rarefaction (the case of a 2-rarefaction is analogous) along which $q_{\rm p}$ vanishes; then $\tilde{q}\ge0\ge q_\ell$, $\tilde{q}\ne q_\ell$ and there exists $\xi_o$ such that $q_{\rm p}(\xi_o)=0$.
Clearly $\hat{p}_\ell=\check{p}(u_{\rm p}(\xi_o))$, hence there exists $\varepsilon\ne0$ sufficiently small such that $0<|\hat{p}_\ell - \check{p}(u_{\rm p}(\xi_o+\varepsilon))|<M$, namely $(u_\ell,u_{\rm p}(\xi_o+\varepsilon)) \in \mathsf{A}_\mathsf{I}$, a contradiction.\smallskip
\\
``$\supseteq$''~Let $(u_\ell,u_r) \in \mathsf{CN}_\mathsf{O}$.
Clearly $u_{\rm v}\equiv u_{\rm p}$.
If $u_{\rm p}$ does not have rarefactions, then $(u_\ell,u_r) \in \mathsf{CN}_2$ because $(u_\ell,u_{\rm p}(\xi_o))$, $(u_{\rm p}(\xi_o),u_r) \in \{(u_\ell,u_\ell), (u_r,u_r), (u_\ell,\tilde{u}), (\tilde{u},u_r), (u_\ell,u_r)\} \subseteq \mathsf{A}_\mathsf{I}^{\scriptscriptstyle\complement}$ for any $\xi_o\in\R$.
If $u_{\rm p}$ has a 1-rarefaction with $\tilde{v} > v_\ell>0$ and a (possibly null) 2-shock, then $(u_\ell,u_r) \in \mathsf{CN}_2$ because $(u_\ell,u_{\rm p}(\xi_o)), (u_{\rm p}(\xi_o),u_r) \in 
(\{u_\ell\} \times \mathcal{R}_1([\tilde{\rho},\rho_\ell],u_\ell)) \cup (\mathcal{R}_1([\tilde{\rho},\rho_\ell],u_\ell) \times \{u_r\}) \cup \{(u_r,u_r)\} \subseteq \mathsf{A}_\mathsf{I}^{\scriptscriptstyle\complement}$ for any $\xi_o\in\R$.
Indeed, $(u_\ell,u_\ell)$, $(\tilde{u},u_r) \in \mathsf{O}$ (because $q_\ell\neq0\neq\tilde{q}$) and for any $u_o \in \mathcal{R}_1([\tilde{\rho},\rho_\ell],u_\ell)$ we have
\begin{align*}
&\hat{p}_\ell-\check{p}(u_o) \ge \hat{p}_\ell-\check{p}_\ell >M~\Rightarrow~(u_\ell,u_o) \in \mathsf{O},&
&\hat{p}(u_o)-\check{p}_r \ge \hat{p}(\tilde{u})-\check{p}_r >M~\Rightarrow~(u_o,u_r) \in \mathsf{O}.
\end{align*}
The remaining cases can be treated analogously.
\end{proof}

\begin{proposition}
The $\Lloc1$-continuity domain of $\rsv$ is $\mathsf{L}=\{(u_\ell,u_r) \in \Omega^2 \colon |\check{p}_r - \hat{p}_\ell| \ne M\}$.
\end{proposition}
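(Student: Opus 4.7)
The plan is to partition $\Omega^2$ by the sign of $|\check{p}_r-\hat{p}_\ell|-M$. By \eqref{eq:hat0}--\eqref{eq:check0} the map $(u_\ell,u_r)\mapsto|\check{p}_r-\hat{p}_\ell|$ is continuous, so $\mathsf{O}=\{|\check{p}_r-\hat{p}_\ell|>M\}$ is open and $\{|\check{p}_r-\hat{p}_\ell|<M\}$ is an open subset of $\mathsf{A}$. This splits the proof into an interior argument, giving $\{|\check{p}_r-\hat{p}_\ell|\ne M\}\subseteq\mathsf{L}$, and a boundary argument, giving the converse inclusion.

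In the interior case, fix $(u_\ell,u_r)$ with $|\check{p}_r-\hat{p}_\ell|\ne M$. If $(u_\ell,u_r)\in\mathsf{O}$, a whole neighbourhood of $(u_\ell,u_r)$ lies in $\mathsf{O}$, so $\rsv\equiv\rsp$ on that neighbourhood and Proposition~\ref{prop:rsp} delivers $\Lloc1$-continuity at once. If instead $|\check{p}_r-\hat{p}_\ell|<M$, a whole neighbourhood lies in $\{|\check{p}_r-\hat{p}_\ell|<M\}\subseteq\mathsf{A}$, and for $(u_\ell^\varepsilon,u_r^\varepsilon)$ therein $\rsv[u_\ell^\varepsilon,u_r^\varepsilon]$ is the juxtaposition of $\rsp[u_\ell^\varepsilon,\hat{u}(u_\ell^\varepsilon)]$ for $\xi<0$ and $\rsp[\check{u}(u_r^\varepsilon),u_r^\varepsilon]$ for $\xi\ge 0$. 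Since $\hat{u}$ and $\check{u}$ are continuous by \eqref{eq:hat0}--\eqref{eq:check0}, composing with the $\Lloc1$-continuous $\rsp$ (Proposition~\ref{prop:rsp}) transfers the continuity to $\rsv$.

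For the boundary case, fix $(u_\ell,u_r)$ with $|\check{p}_r-\hat{p}_\ell|=M$. A small perturbation of $\rho_r$, which strictly changes $\check{p}_r$ by \eqref{eq:check0}, produces a sequence $(u_\ell^\varepsilon,u_r^\varepsilon)\in\mathsf{O}$ converging to $(u_\ell,u_r)$. Along it $\rsv^\varepsilon\equiv\rsp^\varepsilon\to\rsp[u_\ell,u_r]$ in $\Lloc1$, while the target value $\rsv[u_\ell,u_r]$ is the active juxtaposition characterised by $q_{\rm v}(0^\pm)=0$. The crucial observation is that $M>0$ forces $\hat{p}_\ell\ne\check{p}_r$ and hence $\hat{u}_\ell\ne\check{u}_r$, so $(u_\ell,u_r)\notin\mathsf{A}_\mathsf{N}$ and in particular $\tilde{q}\ne 0$. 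Therefore $\rsp[u_\ell,u_r]$ carries a nonzero flux on a fixed interval around $\xi=0$, whereas the active solution $\rsv[u_\ell,u_r]$ has $q\to 0$ as $\xi\to 0^\pm$; the $\L1$-distance between the two on such an interval is bounded below by a positive constant, so $\rsv^\varepsilon\not\to\rsv[u_\ell,u_r]$ in $\Lloc1$.

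The step I expect to be the main obstacle is this last one: identifying a fixed interval around $\xi=0$ on which the two candidate limits differ by a positive $\Lloc1$-amount. This requires invoking the finite speed of propagation for both solutions and arguing by cases on whether $\tilde{u}$ (and, when needed, $u_\ell$ or $u_r$) is subsonic or supersonic, to pin down the constant plateau occupying a neighbourhood of $\xi=0$ in each case; the wave-orientation conditions encoded in $\mathcal{Q}^-_{u_\ell}\cap\mathcal{Q}^+_{u_r}$ ensure that in every subcase the corresponding flux is nonzero, so the separation persists uniformly in $\varepsilon$.
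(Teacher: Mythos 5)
Your proof is correct and follows essentially the same route as the paper's: $\Lloc1$-continuity on the two open sets $\{|\check{p}_r-\hat{p}_\ell|>M\}$ and $\{|\check{p}_r-\hat{p}_\ell|<M\}$ via the $\Lloc1$-continuity of $\rsp$ together with the continuity of $\hat{u}$ and $\check{u}$, and failure of continuity on $\{|\check{p}_r-\hat{p}_\ell|=M\}$ by approximating from $\mathsf{O}$, where the limit is $\rsp[u_\ell,u_r]\neq\rsv[u_\ell,u_r]$. Your last step even makes explicit the positive $\Lloc1$-separation (the $q=0$ plateau of $u_{\rm v}$ near $\xi=0$ against $q_{\rm p}\neq0$ there, which follows from $\tilde{q}\neq0$) that the paper's proof leaves implicit in the assertion $\rsp[u_\ell,u_r]\neq\rsv[u_\ell,u_r]$.
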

\begin{proof}
By Proposition~\ref{prop:rsp} we have that $\rsv$ is $\Lloc1$-continuous in $\mathsf{O}$; in $\mathsf{A} \cap \mathsf{L}$ it is sufficient to exploit the continuity of $\hat{u}$, $\check{u}$, $\sigma$, $\lambda_1$ and $\lambda_2$.
Hence $\rsv$ is $\Lloc1$-continuous in $\mathsf{L}$.
Assume now that $(u_\ell,u_r) \in \mathsf{L}^{\scriptscriptstyle\complement} \doteq \mathsf{A} \setminus \mathsf{L} \subset \mathsf{A}_\mathsf{I}$.
Clearly $\hat{u}_\ell\ne\check{u}_r$ and therefore $\rsp[u_\ell,u_r]\neq\rsv[u_\ell,u_r]$.
If $(u_\ell^\varepsilon,u_r^\varepsilon) \in \mathsf{O}$ converges to $(u_\ell,u_r)$, then $\rsv[u_\ell^\varepsilon,u_r^\varepsilon] = \rsp[u_\ell^\varepsilon,u_r^\varepsilon]$ converges in $\Lloc1$ to $\rsp[u_\ell,u_r]$ and not to $\rsv[u_\ell,u_r]$ by the $\Lloc1$-continuity of $\rsp$.
\end{proof}

\begin{proposition}
If $u_0\in\Omega$ is such that $q_0=0$, then $\mathcal{I}_{u_0}$ defined by \eqref{eq:I4RSP0} is an invariant domain of $\rsv$.
\end{proposition}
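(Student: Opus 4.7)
My plan is to split into the open and active cases and reduce to a membership check at the traces $\hat{u}_\ell$ and $\check{u}_r$. If $(u_\ell,u_r) \in \mathsf{O}$, then \eqref{eq:rsvo} gives $\rsv[u_\ell,u_r] = \rsp[u_\ell,u_r]$ and the conclusion is immediate because $\mathcal{I}_{u_0}$ is already an invariant domain of $\rsp$. If $(u_\ell,u_r) \in \mathsf{A}$, then by \eqref{eq:rsva} and \eqref{eq:rsvqm} with $q_m=0$, $u_{\rm v}$ is the juxtaposition of the single 1-wave $\rsp[u_\ell,\hat{u}_\ell]$ for $\xi<0$, a stationary undercompressive jump from $\hat{u}_\ell$ to $\check{u}_r$ at $\xi=0$, and the single 2-wave $\rsp[\check{u}_r,u_r]$ for $\xi\geq 0$. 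Invoking the invariance of $\mathcal{I}_{u_0}$ under $\rsp$ on each side, the proof reduces to establishing the two memberships
\[\hat{u}_\ell \in \mathcal{I}_{u_0}\quad\text{and}\quad \check{u}_r \in \mathcal{I}_{u_0}.\]

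The argument continues in $(\mu,\nu)$-coordinates. The assumption $q_0=0$ reads $\nu_0=0$, so $\mathcal{R}_1(\mu,u_0)=\mu_0-\mu$ and $\mathcal{R}_2(\mu,u_0)=\mu-\mu_0$, whence $\mathcal{I}_{u_0}=\{(\mu,\nu)\in\R^2\colon \mu\leq\mu_0,\ |\nu|\leq\mu_0-\mu\}$. Since by construction $\hat{\nu}_\ell=0$, the condition $\hat{u}_\ell\in\mathcal{I}_{u_0}$ collapses to the single scalar inequality $\hat{\mu}_\ell\leq\mu_0$. I would then split according to \eqref{eq:hat0}: if $\nu_\ell\leq 0$ then $\hat{\mu}_\ell=\mu_\ell+\nu_\ell\leq \mu_\ell\leq\mu_0$ directly; if $\nu_\ell>0$ then $\hat{\mu}_\ell=\mu_\ell-\Xi^{-1}(\nu_\ell)$, and one needs $-\Xi^{-1}(\nu_\ell)\leq \mu_0-\mu_\ell$. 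The assumption $u_\ell\in\mathcal{I}_{u_0}$ gives $\nu_\ell\leq \mu_0-\mu_\ell$, so it suffices to establish the pointwise bound $-\Xi^{-1}(\nu)\leq\nu$ for $\nu>0$.

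This pointwise estimate, which is the only nontrivial technical ingredient, is equivalent (applying the decreasing function $\Xi$ and using its oddness) to $\nu\leq \Xi(-\nu)=2\sinh(\nu/2)$, which in turn follows from $\sinh(t)\geq t$ for $t\geq 0$. The argument for $\check{u}_r$ is entirely symmetric, using \eqref{eq:check0}: the only nontrivial case is $\nu_r<0$, where $\check{\mu}_r=\mu_r+\Xi^{-1}(\nu_r)$ and the oddness of $\Xi^{-1}$ reduces matters to the same inequality, combined with $-\nu_r\leq\mu_0-\mu_r$ coming from $u_r\in\mathcal{I}_{u_0}$. I expect the main obstacle to be precisely this case $\nu_\ell>0$ (and its mirror $\nu_r<0$): although closing the valve moves $u_\ell$ to the right along a shock of $\mathcal{FL}_1^{u_\ell}$, i.e., \emph{increases} $\mu$, one has to recognise that this increase is controlled by the ``room'' $\mu_0-\mu_\ell$ left inside $\mathcal{I}_{u_0}$, which requires the convexity-type estimate $2\sinh(\nu/2)\geq\nu$ rather than a straightforward monotonicity argument.
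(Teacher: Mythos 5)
Your proposal is correct and follows essentially the same route as the paper: reduce, via the invariance of $\mathcal{I}_{u_0}$ under $\rsp$ on each side of the valve, to the memberships $\hat{u}_\ell,\check{u}_r\in\mathcal{I}_{u_0}$, which is exactly the observation the paper's proof rests on. Your explicit verification of these memberships in $(\mu,\nu)$-coordinates, using \eqref{eq:hat0},\eqref{eq:check0} and the inequality $-\Xi^{-1}(\nu)\le\nu$ for $\nu>0$ (i.e.\ $2\sinh(\nu/2)\ge\nu$), correctly fills in the detail that the paper leaves as an unproved observation.
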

\begin{proof}
It is sufficient to recall that $\mathcal{I}_{u_0}$ is an invariant domain of $\rsp$ and to observe that $\hat{u}(u)$, $\check{u}(u) \in \mathcal{I}_{u_0}$ for any $u\in\mathcal{I}_{u_0}$.
\end{proof}

\subsection{Proof of Proposition~\ref{prop:coo}}\label{sec:coo}

In this subsection we completely characterize the states $(u_\ell,u_r) \in \mathsf{O}_{\mathsf{O}}$ by proving Proposition~\ref{prop:coo}.
Clearly, $(u_\ell,u_r) \not\in \mathsf{A}_\mathsf{N}$, namely $\tilde{q} \ne 0$.
Therefore, we have $\tilde{\rho} \not\in \{\hat{\rho}_\ell, \check{\rho}_r \}$.

We recall that $\hat{\mu}_\ell$, $\check{\mu}_r$ are given by \eqref{eq:hat0},\eqref{eq:check0}.

\begin{figure}[!htbp]
      \centering
      \begin{psfrags}
      \psfrag{b}[c,B]{$\mu$}
      \psfrag{c}[c,t]{$\nu$}
      \psfrag{d}[c,B]{$\hphantom{_\ell}u_\ell$}
      \psfrag{f}[c,c]{$\hphantom{_\ell}\hat{u}_\ell$}
      \psfrag{1}[b,l]{$\mathcal{FL}_1^{u_\ell}$}
      \psfrag{3}[b,l]{$\mathcal{FL}_2^{u_\ell}$}
      \psfrag{2}[b,l]{$\mathcal{FL}_2^{\hat{u}_\ell}$}
        \includegraphics[width=.25\textwidth]{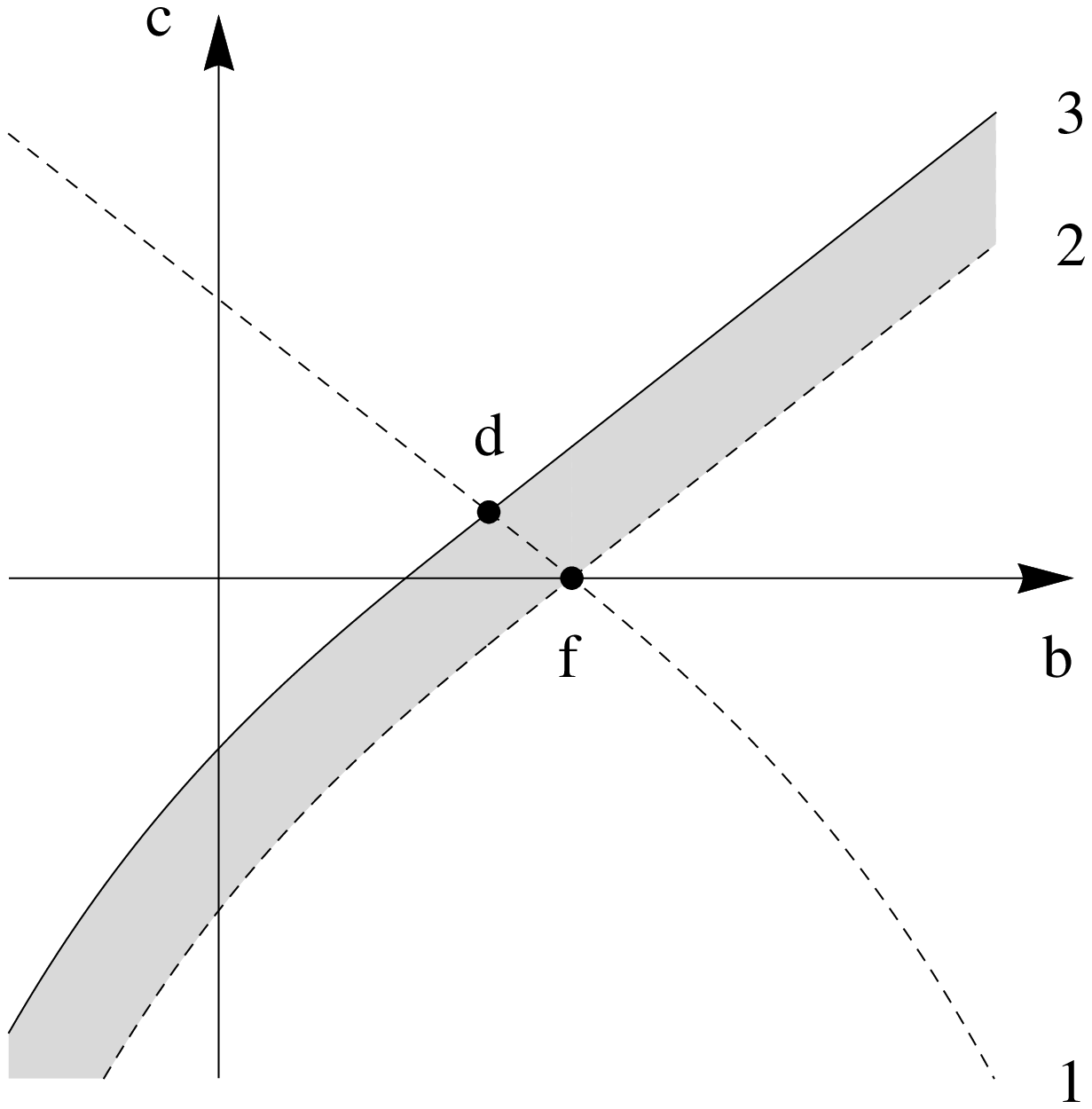}\qquad\qquad
      \psfrag{b}[c,B]{$\rho$}
      \psfrag{c}[c,t]{$q$}
      \psfrag{d}[c,B]{$u_\ell$}
      \psfrag{f}[c,c]{$\hat{u}_\ell$}
      \psfrag{3}[b,r]{$\mathcal{FL}_2^{u_\ell}$}
        \includegraphics[width=.25\textwidth]{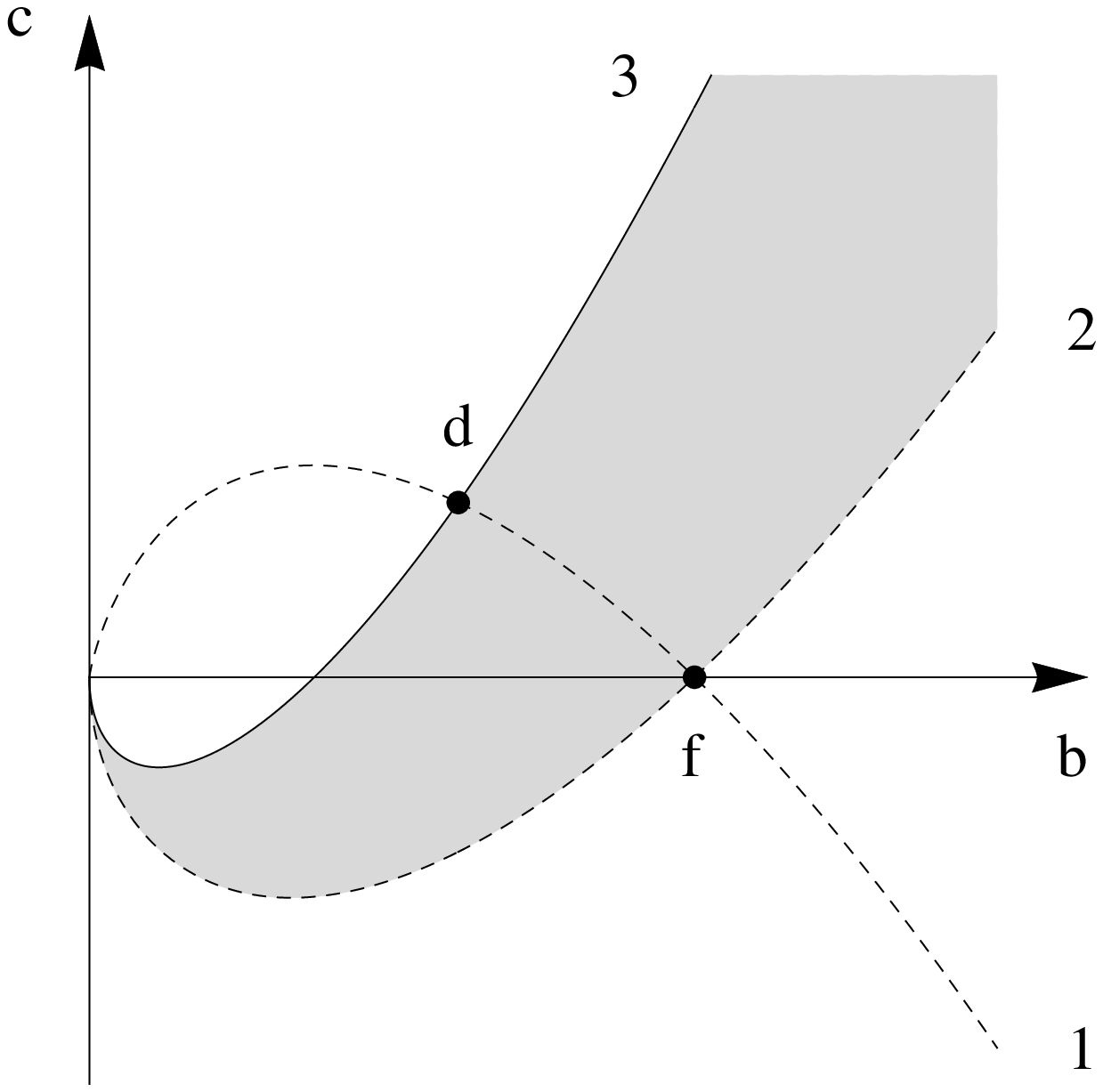}
      \end{psfrags}
      \caption{The set $\{u \in \Omega\colon (u_\ell,u) \in \mathsf{O}_{\mathsf{O}}^3\}$ in the case $\nu_\ell>0$ (otherwise it is empty).}
\label{fig:COO34}
\end{figure}
\begin{lemma}
We have, see \figurename~\ref{fig:COO34},
\begin{align*}
&\mathsf{O}_{\mathsf{O}}^3 \doteq \left\{ (u_\ell,u_r) \in \mathsf{O} : 0 < \tilde{\nu} \le \nu_\ell \right\}
\subseteq \mathsf{O}_{\mathsf{O}},&
&\mathsf{O}_{\mathsf{O}}^4 \doteq \left\{ (u_\ell,u_r) \in \mathsf{O} : \nu_r \le \tilde{\nu} < 0 \right\}
\subseteq \mathsf{O}_{\mathsf{O}}.
\end{align*}
\end{lemma}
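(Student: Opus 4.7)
My plan is a two-step argument: first, reduce the hypothesis $(u_\ell,u_r)\in\mathsf{O}$ to the one-sided inequality $\hat{p}_\ell-\check{p}_r>M$; then transfer this bound to the traces $(u_{\rm p}^-,u_{\rm p}^+)$ via monotonicity estimates along the 1-shock segment from $u_\ell$ to $\hat{u}_\ell$.

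Fix $(u_\ell,u_r)\in\mathsf{O}_\mathsf{O}^3$. The assumption $0<\tilde{\nu}\le\nu_\ell$ places $\tilde{u}$ on the 1-shock part of $\mathcal{FL}_1^{u_\ell}$ between $u_\ell$ and $\hat{u}_\ell$, so that $\mu_\ell\le\tilde{\mu}\le\hat{\mu}_\ell$ and $\nu_\ell>0$. Since the valve is open, $u_{\rm v}\equiv u_{\rm p}$; moreover $\tilde{\nu}>0$ implies that the 2-wave has strictly positive speed throughout (either $\lambda_2=a(\nu+1)>0$ for a rarefaction, or $s_2=a(\tilde{\nu}+\sqrt{\rho_r/\tilde{\rho}})>0$ for a shock), so the 2-wave lies entirely in $\{\xi>0\}$. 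Depending on the sign of the 1-shock speed $s_1=a(\nu_\ell-\sqrt{\tilde{\rho}/\rho_\ell})$, the pair of traces $(u_{\rm p}^-,u_{\rm p}^+)$ is one of $(u_\ell,u_\ell)$, $(u_\ell,\tilde{u})$ or $(\tilde{u},\tilde{u})$.

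The first key estimate is the geometric bound $\check{\mu}_r<\tilde{\mu}$. Since $\tilde{u}\in\mathcal{BL}_2^{u_r}$ with $\tilde{\nu}>0$, a 2-rarefaction gives $\check{\mu}_r=\tilde{\mu}-\tilde{\nu}<\tilde{\mu}$. For a 2-shock, setting $d_2=\tilde{\mu}-\mu_r>0$ and $\nu_r=\tilde{\nu}+\Xi(d_2)$, the case $\nu_r>0$ reads $\check{\mu}_r=\mu_r-\nu_r<\mu_r<\tilde{\mu}$, while the case $\nu_r\le 0$ exploits $\nu_r>\Xi(d_2)$ and the strict decrease of $\Xi^{-1}$ to produce $\Xi^{-1}(\nu_r)<d_2$, whence $\check{\mu}_r=\mu_r+\Xi^{-1}(\nu_r)<\tilde{\mu}$. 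Combined with $\tilde{\mu}\le\hat{\mu}_\ell$ this yields $\check{p}_r<\hat{p}_\ell$, so that $(u_\ell,u_r)\in\mathsf{O}$ must read $\hat{p}_\ell-\check{p}_r>M$. A parallel analysis of the same sub-cases also gives $\check{p}(\tilde{u})\le\check{p}_r$.

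A final one-variable calculus argument delivers $\hat{p}(\tilde{u})\ge\hat{p}_\ell$ for $\tilde{u}$ on the 1-shock segment. Writing $g(d)=\hat{\mu}(\tilde{u})-\hat{\mu}_\ell=d+\Xi^{-1}(\nu_\ell)-\Xi^{-1}(\nu_\ell+\Xi(d))$ for $d\in[0,-\Xi^{-1}(\nu_\ell)]$, one has $g(0)=g(-\Xi^{-1}(\nu_\ell))=0$, and $g'(d)=0$ only at the unique interior point $d_c=\Xi^{-1}(-\nu_\ell/2)$; evaluating $g(d_c)=4\,\mathrm{arcsinh}(\nu_\ell/4)-2\,\mathrm{arcsinh}(\nu_\ell/2)\ge 0$ via the concavity of $\mathrm{arcsinh}$ forces $g\ge 0$ throughout. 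Combining the three estimates, in each of the three trace configurations one has $\hat{p}(u_{\rm p}^-)\ge\hat{p}_\ell$ and $\check{p}(u_{\rm p}^+)\le\check{p}_r$, hence $\hat{p}(u_{\rm p}^-)-\check{p}(u_{\rm p}^+)\ge\hat{p}_\ell-\check{p}_r>M$, i.e.\ $(u_{\rm p}^-,u_{\rm p}^+)\in\mathsf{O}$. The inclusion $\mathsf{O}_\mathsf{O}^4\subseteq\mathsf{O}_\mathsf{O}$ follows by applying the same argument to the reflected Riemann problem $(x,v)\mapsto(-x,-v)$, which swaps $(u_\ell,\hat{u},\mathcal{FL}_1)$ with $(u_r,\check{u},\mathcal{BL}_2)$. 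The main obstacle is ruling out a sign change of $g$ in its interior, which requires locating $d_c$ explicitly and invoking the concavity of $\mathrm{arcsinh}$.
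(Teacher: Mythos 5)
Your proof is correct and follows essentially the same route as the paper's: one reduces to the possible trace pairs $(u_{\rm p}^-,u_{\rm p}^+)\in\{(u_\ell,u_\ell),(u_\ell,\tilde{u}),(\tilde{u},\tilde{u})\}$ according to the sign of the $1$-shock speed (the paper's cases, including $\tilde{u}=u_\ell$) and then shows the pressure gap can only grow, $\check{\mu}(u_{\rm p}^+)\le\check{\mu}_r<\hat{\mu}_\ell\le\hat{\mu}(u_{\rm p}^-)$, the only difference being that you verify explicitly, via the $\Xi$-computations and the concavity of $\mathrm{arcsinh}$ in the $g\ge0$ argument, the inequalities the paper dismisses as ``simple geometric arguments''. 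One small patch: in the configuration $(u_{\rm p}^-,u_{\rm p}^+)=(u_\ell,u_\ell)$ you need $\check{p}_\ell\le\check{p}_r$, which is not literally one of your three stated estimates, but it follows at once from $\check{\mu}_\ell=\mu_\ell-\nu_\ell\le\tilde{\mu}-\tilde{\nu}=\check{\mu}(\tilde{u})\le\check{\mu}_r$, using $\tilde{\mu}\ge\mu_\ell$, $\tilde{\nu}\le\nu_\ell$ and your second estimate.
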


\begin{proof}
Simple geometric arguments show that if $(u_\ell,u_r)\in\mathsf{O}_{\mathsf{O}}^3 \cup \mathsf{O}_{\mathsf{O}}^4$, then
\begin{equation}\label{eq:best}
\left |\check{p}(u_{\rm p}^+) - \hat{p}(u_{\rm p}^-)\right | \ge \left|\check{p}_r - \hat{p}_\ell \vphantom{\hat{p}(u_{\rm p}^-)}\right|
\end{equation}
and therefore $(u_\ell,u_r) \in \mathsf{O}_{\mathsf{O}}$.
Indeed, let $(u_\ell,u_r)\in\mathsf{O}_{\mathsf{O}}^3$, the case $(u_\ell,u_r)\in\mathsf{O}_{\mathsf{O}}^4$ is analogous; then $q_\ell, \tilde{q} >0$ and so $\rho_\ell \le \tilde{\rho}<\hat{\rho}_\ell$.

\begin{enumerate}[label={(\Alph*)},leftmargin=*,nolistsep]\setlength{\itemsep}{0cm}\setlength\itemsep{0em}%

\item\label{A}
Assume that $\rho_\ell < \tilde{\rho}<\hat{\rho}_\ell$ and $q_\ell > \tilde{q}$, see \figurename~\ref{fig:A}.
In this case $u_{\rm p}^\pm = \tilde{u}$ and \eqref{eq:best} holds true because $\check{\mu}(u_{\rm p}^+) = \check{\mu}(\tilde{u}) \le \check{\mu}_r < \hat{\mu}_\ell < \hat{\mu}(\tilde{u})=\hat{\mu}(u_{\rm p}^-)$.
\begin{figure}[!htbp]
      \centering
      \begin{psfrags}
      \psfrag{a}[b,c]{$\tilde{u}$}
      \psfrag{b}[B,l]{$\mu$}
      \psfrag{c}[t,l]{$\nu$}
      \psfrag{d}[c,c]{$u_\ell\,$}
      \psfrag{e}[c,c]{$u_r$}
      \psfrag{f}[B,c]{$\hat{u}_\ell$}
      \psfrag{g}[B,c]{$\check{u}_r$}
      \psfrag{h}[c,l]{$\hat{u}_{\scriptscriptstyle\sim}$}
      \psfrag{i}[r,c]{$\check{u}_{\scriptscriptstyle\sim}$}
        \includegraphics[width=.25\textwidth]{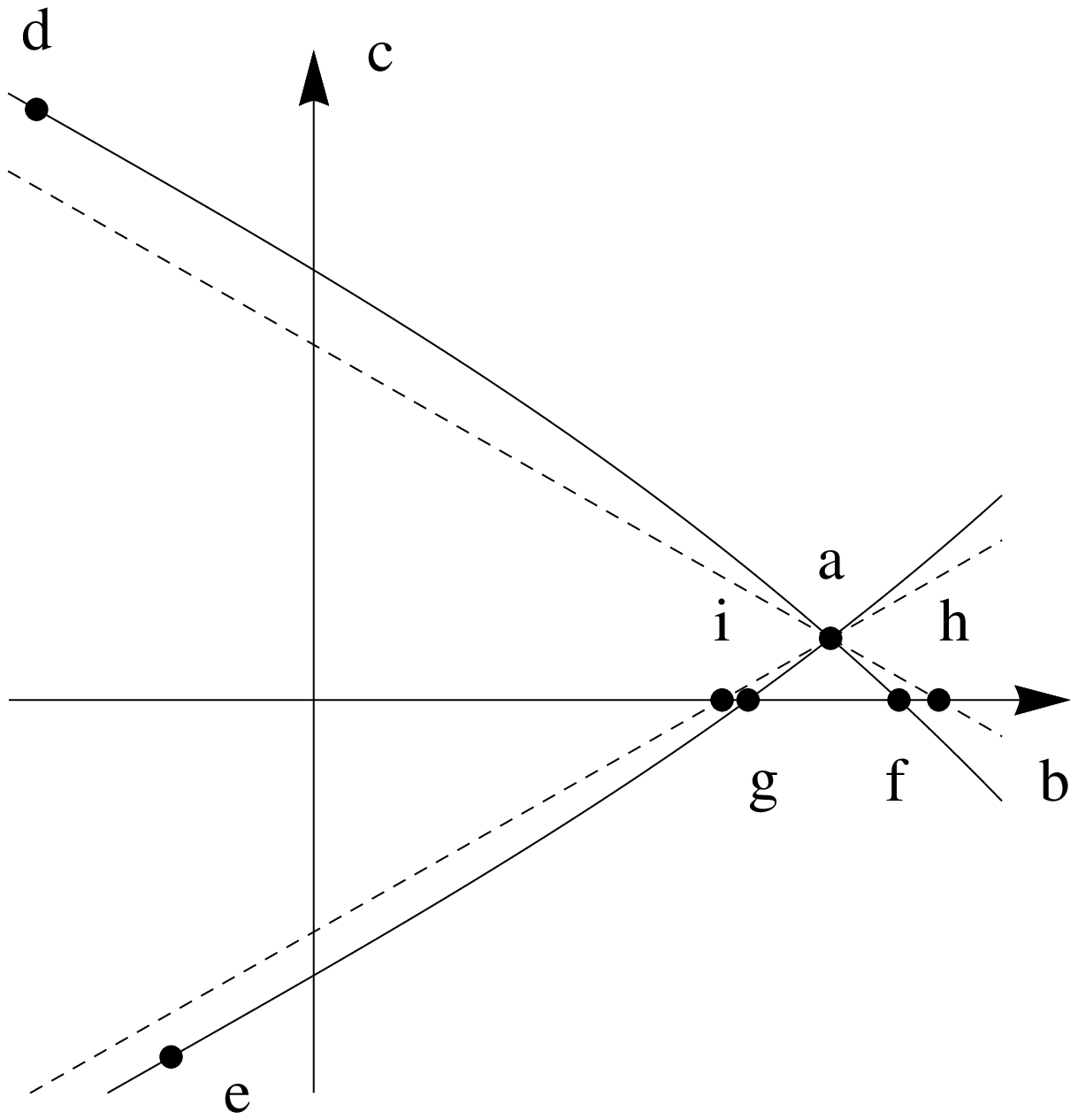}\qquad\qquad
      \psfrag{a}[c,l]{$\tilde{u}$}
      \psfrag{b}[B,l]{$\rho$}
      \psfrag{c}[t,r]{$q$}
      \psfrag{d}[b,c]{$u_\ell$}
      \psfrag{e}[c,c]{$u_r$}
        \includegraphics[width=.25\textwidth]{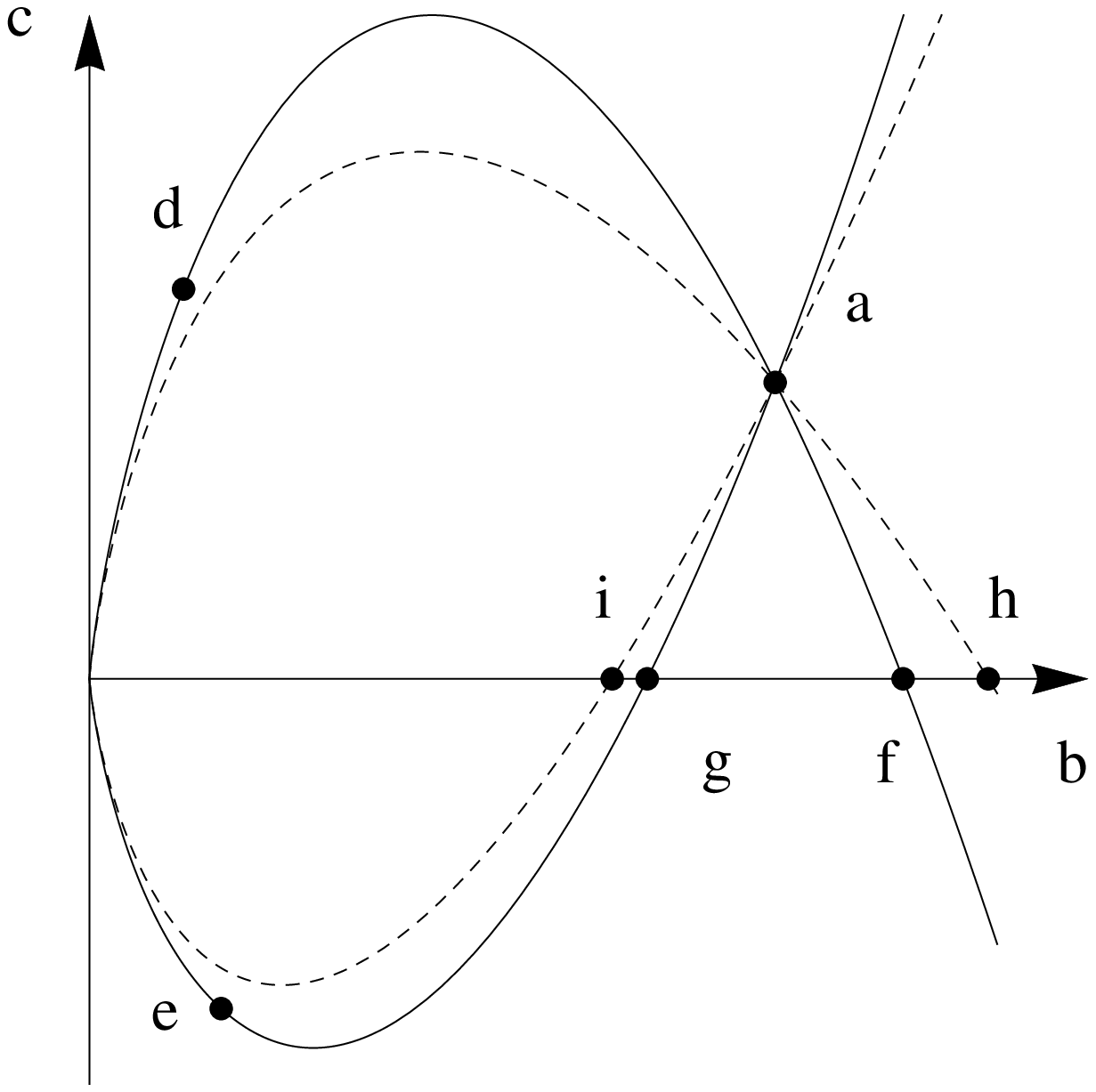}
      \end{psfrags}
      \caption{The case \ref{A} with $q_r < 0$; the dashed curves are the Lax curves through $\tilde{u}$. Here $\hat{u}_{\scriptscriptstyle\sim}=\hat{u}(\tilde{u})$ and $\check{u}_{\scriptscriptstyle\sim}=\check{u}(\tilde{u})$.}
\label{fig:A}
\end{figure}

\item
If $\rho_\ell<\tilde{\rho}<\hat{\rho}_\ell$ and $q_\ell = \tilde{q}$, then $u_{\rm p}^-=u_\ell$, $u_{\rm p}^+ =\tilde{u}$ and $\check{\mu}(u_{\rm p}^+) = \check{\mu}(\tilde{u}) \le \check{\mu}_r < \hat{\mu}_\ell =\hat{\mu}(u_{\rm p}^-)$.

\item
If $\rho_{\ell}<\tilde{\rho}<\hat{\rho}_\ell$ and $q_\ell<\tilde{q}$, then $u_{\rm p}^\pm = u_\ell$ and $\check{\mu}(u_{\rm p}^+) = \check{\mu}_\ell < \check{\mu}_r < \hat{\mu}_\ell =\hat{\mu}(u_{\rm p}^-)$.

\item
If $\rho_\ell=\tilde{\rho}<\hat{\rho}_\ell$, then $u_{\rm p}^\pm = u_\ell = \tilde{u}$ and $\check{\mu}(u_{\rm p}^+) = \check{\mu}_\ell \le \check{\mu}_r < \hat{\mu}_\ell =\hat{\mu}(u_{\rm p}^-)$.\qedhere
\end{enumerate}
\end{proof}

By the previous lemma we have that
\[\mathsf{O}_\mathsf{A} \subseteq
\left\{(u_\ell,u_r) \in \mathsf{O} \colon \tilde{\nu} > \max\{0,\nu_\ell\}\right\}
\cup
\left\{(u_\ell,u_r) \in \mathsf{O} \colon \tilde{\nu} < \min\{0,\nu_r\}\right\}.\]
Hence, the following lemma completes the proof of Proposition~\ref{prop:coo}.

\begin{lemma}
We have 
\begin{align*}
&\left\{(u_\ell,u_r) \in \mathsf{O}_\mathsf{O} \colon \tilde{\nu} > \max\{0,\nu_\ell\}\right\} = \mathsf{O}_\mathsf{O}^1,&
&\left\{(u_\ell,u_r) \in \mathsf{O}_\mathsf{O} \colon \tilde{\nu} < \min\{0,\nu_r\}\right\} = \mathsf{O}_\mathsf{O}^2.
\end{align*}
\end{lemma}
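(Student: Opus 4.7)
The two equalities are symmetric under the involution $(\mu,\nu,x)\mapsto(\mu,-\nu,-x)$, which swaps $u_\ell\leftrightarrow u_r$ and $1$-waves with $2$-waves, so I treat only the first equality; the second is analogous. Fix $(u_\ell,u_r)\in\mathsf{O}$ with $\tilde{\nu}>\max\{0,\nu_\ell\}$. The strict inequality $\tilde{\nu}>\nu_\ell$, together with the description of $\mathcal{FL}_1^{u_\ell}$ in the $(\mu,\nu)$-coordinates, forces the $1$-wave of $u_{\rm p}=\rsp[u_\ell,u_r]$ to be a rarefaction from $u_\ell$ to $\tilde{u}$, along which the Riemann invariant $w=\mu+\nu$ is constant and $\lambda_1=a(\nu-1)$ is strictly increasing. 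The subsequent $2$-wave from $\tilde{u}$ to $u_r$ has speed at least $\lambda_2(\tilde{u})=a(\tilde{\nu}+1)>0$, hence lies in $\{\xi>0\}$ and does not affect the traces $u_{\rm p}^\pm$.

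The relative position of $\xi=0$ within the $1$-rarefaction then splits the analysis into three exhaustive subcases. If $\tilde{\nu}\le 1$, the rarefaction lies in $\{\xi\le 0\}$, whence $u_{\rm p}^\pm=\tilde{u}$ and $\nu^*\doteq\nu(u_{\rm p}^\pm)=\tilde{\nu}$. If $\nu_\ell\ge 1$, the rarefaction lies in $\{\xi\ge 0\}$, whence $u_{\rm p}^\pm=u_\ell$ and $\nu^*=\nu_\ell$. If $\nu_\ell<1<\tilde{\nu}$, the rarefaction straddles $\xi=0$ exactly at the sonic state where $\lambda_1=0$, namely $\bar{u}_\ell$, the point of $\mathcal{FL}_1^{u_\ell}$ with $\bar{\nu}_\ell=1$ and (by constancy of $w$) $\bar{\mu}_\ell+1=\mu_\ell+\nu_\ell$; thus $u_{\rm p}^\pm=\bar{u}_\ell$ and $\nu^*=1$. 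In every case $u_{\rm p}^-=u_{\rm p}^+\doteq u^*$, $\nu^*>0$, $\mu^*+\nu^*=\mu_\ell+\nu_\ell$, and a direct inspection of the three subcases gives $\nu^*=\max\{1,\nu_\ell\}\cdot\min\{1,\tilde{\nu}\}$.

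Since $\nu^*>0$, the formulas \eqref{eq:hat0} and \eqref{eq:check0} give $\hat{\mu}^*=\mu^*-\Xi^{-1}(\nu^*)$ and $\check{\mu}^*=\mu^*-\nu^*$, so by the oddness of $\Xi^{-1}$,
\[
\hat{p}(u^*)-\check{p}(u^*)=a^2e^{\mu^*}\bigl[e^{-\Xi^{-1}(\nu^*)}-e^{-\nu^*}\bigr]=e^{\mu^*+\nu^*}\,\Phi(-\nu^*)=e^{\mu_\ell+\nu_\ell}\,\Phi(-\nu^*).
\]
Because $\Xi$ is strictly decreasing with $\Xi(0)=0$, we have $\Xi^{-1}(\nu^*)<0<\nu^*$, so the bracketed expression is strictly positive, and the criterion $(u_{\rm p}^-,u_{\rm p}^+)\in\mathsf{O}$, i.e.\ $|\check{p}(u_{\rm p}^+)-\hat{p}(u_{\rm p}^-)|>M$, reduces to $e^{\mu_\ell+\nu_\ell}\,\Phi(-\nu^*)>M$. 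Substituting the identity $\nu^*=\max\{1,\nu_\ell\}\cdot\min\{1,\tilde{\nu}\}$ yields exactly the condition defining $\mathsf{O}_{\mathsf{O}}^1$, so both inclusions follow simultaneously.

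The only step that requires real care is the identification of $u^*$ in the straddling case $\nu_\ell<1<\tilde{\nu}$: one must observe that $\lambda_1=a(\nu-1)$ vanishes precisely at $\nu=1$ and that this sonic point coincides with the $q$-maximizer $\bar{u}_\ell$ on $\mathcal{FL}_1^{u_\ell}$ (since $q=a\,\nu\,e^{\mu_\ell+\nu_\ell-\nu}$ has its unique critical point at $\nu=1$). All remaining manipulations are routine applications of the Riemann invariant $w$, the trace formulas \eqref{eq:hat0}--\eqref{eq:check0}, and the definition of $\Phi$.
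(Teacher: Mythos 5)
Your proof is correct and follows essentially the same route as the paper's: the same three-way split on the position of $\xi=0$ within the $1$-rarefaction (yielding traces $\tilde{u}$, $u_\ell$, or $\bar{u}_\ell$), followed by the same computation of $\hat{p}-\check{p}$ via \eqref{eq:hat0},\eqref{eq:check0} and $\Phi$; your formula $\nu^*=\max\{1,\nu_\ell\}\cdot\min\{1,\tilde{\nu}\}$ merely packages the paper's three separate identities into one. One tiny imprecision: when the $2$-wave is a shock its speed is $\tilde{v}+a\sqrt{\rho_r/\tilde{\rho}}$, which by the Lax condition is strictly \emph{less} than $\lambda_2(\tilde{u})$, not at least $\lambda_2(\tilde{u})$ — but it still exceeds $a\tilde{\nu}>0$, so your conclusion that the $2$-wave lies in $\{\xi>0\}$ stands.
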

\begin{proof}
To prove the lemma it is sufficient to show
\begin{align}\label{eq:best11}
\left\{ (u_\ell,u_r) \in \mathsf{O}_\mathsf{O} \colon 1 \le \nu_\ell < \tilde{\nu} \right\}
&= 
\left\{ (u_\ell,u_r) \in \mathsf{O} \colon 1 \le \nu_\ell < \tilde{\nu},\ 
\Phi(-\nu_\ell) > M \, e^{-\mu_\ell-\nu_\ell} \right\},
\\\label{eq:best12}
\left\{ (u_\ell,u_r) \in \mathsf{O}_\mathsf{O} \colon \nu_\ell < 1 < \tilde{\nu} \right\}
&= 
\left\{ (u_\ell,u_r) \in \mathsf{O} \colon \nu_\ell < 1 < \tilde{\nu},\ 
\Phi(-1) > M \, e^{-\mu_\ell-\nu_\ell} \right\},
\\\label{eq:best5}
\left\{ (u_\ell,u_r) \in \mathsf{O}_\mathsf{O} \colon \max\{0,\nu_\ell\} < \tilde{\nu} \le 1 \right\}
&= 
\left\{ (u_\ell,u_r) \in \mathsf{O} \colon \max\{0,\nu_\ell\} < \tilde{\nu} \le 1,\ 
\Phi(-\tilde{\nu}) > M \, e^{-\mu_\ell-\nu_\ell} \right\},
\\\label{eq:best9}
\left\{ (u_\ell,u_r) \in \mathsf{O}_\mathsf{O} \colon \tilde{\nu} < \nu_r \le -1 \right\}
&= 
\left\{ (u_\ell,u_r) \in \mathsf{O} \colon \tilde{\nu} < \nu_r \le -1,\ 
\Phi(\nu_r) > M \, e^{\nu_r-\mu_r} \right\},
\\\label{eq:best10}
\left\{ (u_\ell,u_r) \in \mathsf{O}_\mathsf{O} \colon \tilde{\nu} < -1 <  \nu_r \right\}
&= 
\left\{ (u_\ell,u_r) \in \mathsf{O} \colon \tilde{\nu} < -1 <  \nu_r,\ 
\Phi(-1) > M \, e^{\nu_r-\mu_r} \right\},
\\\label{eq:best6}
\left\{ (u_\ell,u_r) \in \mathsf{O}_\mathsf{O} \colon -1 \le \tilde{\nu} <  \min\{0,\nu_r\} \right\}
&= 
\left\{ (u_\ell,u_r) \in \mathsf{O} \colon -1 \le \tilde{\nu} <  \min\{0,\nu_r\},\ 
\Phi(\tilde{\nu}) > M \, e^{\nu_r-\mu_r} \right\}.
\end{align}
We recall that $(u_\ell,u_r) \in \mathsf{O}_\mathsf{O}$ if and only if $(u_\ell,u_r) \in \mathsf{O}$ and by \eqref{PR2}
\begin{equation}\label{eq:beast}
\left|\check{p}(u_{\rm p}^+) - \hat{p}(u_{\rm p}^-)\right| > M.
\end{equation}
We prove \eqref{eq:best11}--\eqref{eq:best5}; the proof of \eqref{eq:best9}--\eqref{eq:best6} is analogous.

If $(u_\ell,u_r) \in \mathsf{O}$ satisfies $1 \le \nu_\ell < \tilde{\nu}$, then $u_{\rm p}^\pm = u_\ell$ and \eqref{eq:beast} is equivalent to
\[a^2\left[e^{\hat{\mu}_\ell} - e^{\check{\mu}_\ell}\right] =
a^2 e^{\mu_\ell} \left[ e^{-\Xi^{-1}(\nu_\ell)} - e^{-\nu_\ell} \right] = e^{\mu_\ell+\nu_\ell} \, \Phi(-\nu_\ell) > M,\]
because of \eqref{eq:hat0},\eqref{eq:check0}.
Therefore \eqref{eq:best11} holds true. 

If $(u_\ell,u_r) \in \mathsf{O}$ satisfies $\nu_\ell < 1 < \tilde{\nu}$, then $u_{\rm p}^\pm = \bar{u}_\ell$ and \eqref{eq:beast} is equivalent to
\[a^2\left[e^{\hat{\mu}(\bar{u}_\ell)} - e^{\check{\mu}(\bar{u}_\ell)}\right] =
a^2 e^{\bar{\mu}_\ell} \left[ e^{-\Xi^{-1}(\bar{\nu}_\ell)} - e^{-\bar{\nu}_\ell} \right] = e^{\mu_\ell+\nu_\ell} \Phi(-1)> M,\]
because of \eqref{eq:hat0},\eqref{eq:check0}, $\bar{\mu}_\ell = \mu_\ell+\nu_\ell-1$ and because $\bar{\nu}_\ell = 1$ by \eqref{e:gigina}.
Therefore \eqref{eq:best12} holds true.

If $(u_\ell,u_r) \in \mathsf{O}$ satisfies $\max\{0,\nu_\ell\} < \tilde{\nu} \le 1$, then $u_{\rm p}^\pm = \tilde{u}$ and \eqref{eq:beast} is equivalent to
\[
a^2\left[e^{\hat{\mu}(\tilde{u})} - e^{\check{\mu}(\tilde{u})}\right]  =
a^2 e^{\tilde{\mu}} \left[ e^{-\Xi^{-1}(\tilde{\nu})} - e^{-\tilde{\nu}} \right] = e^{\mu_\ell+\nu_\ell} \Phi(-\tilde{\nu})> M,\]
because of \eqref{eq:hat0},\eqref{eq:check0} and by $\tilde{\mu} + \tilde{\nu} = \mu_\ell+\nu_\ell$.
Therefore \eqref{eq:best5} holds true.
\end{proof}

\section{Conclusions}\label{sec:fin}

In this paper we studied a mathematical model for the isothermal fluid flow in a pipe with a valve. The modeling of the flow through the valve has been based on the general definition of {\em coupling Riemann solver}; in turn, the specific properties of the valve impose the coupling condition and then the solver. Our aim was to understand to what extent the solver satisfies some crucial properties: coherence, consistence and continuity. Coherence, in particular, corresponds to the commuting (chatting) of the valve, a well-known issue in real applications. In the same time we also searched for invariant domains. To the best of our knowledge, the mathematical modeling of valves has never considered these aspects. 

We focused on the case of a simple pressure-relief valve; the framework we proposed is however suitable to deal with other types of valves. Even in the simple case under consideration, a complete characterization of the states (density and velocity of the fluid) that share these properties is not trivial and requires a very detailed study of the solver. Nevertheless, we believe that our results are rather satisfactory. 

Several issues now arise. On the one hand, we intend to test our method to other kind of valves in order to understand  whether in some cases the analysis can be simplified. On the other hand, a natural question is how to circumvent these difficulties. This can be done in several ways: for instance, either by introducing a finite response time of the valve or by locating a pair of sensors sufficiently far from the valve, see \cite[page 31]{Koch}. A related important problem is the water-hammer effect \cite{CGLRT}, which is due to the sudden closure of a valve. Even further, the study of flows in networks in presence of valves appears extremely appealing, see \cite{gugat2017mip, hante2016challenges, Koch, rios2015optimization} and the references therein; owing to the complexity of this subject, this is why we kept our model as simple as possible, while however catching the most important features of the valves working. A last natural step would be toward optimization problems, see \cite{Banda-Herty, Gugat-Herty-Schleper, Herty-compressors, Herty-Sachers} in the case of compressors and \cite{Koch} for valves. We plan to treat these topics in forthcoming papers.

\section*{Acknowledgements}

M.\ D.\ Rosini thanks Edda Dal Santo for useful discussions.
The first author was partially supported by the INdAM -- GNAMPA Project 2016 ``Balance Laws in the Modeling of Physical, Biological and Industrial Processes''.
The last author was partially supported by the INdAM -- GNAMPA Project 2017 ``Equazioni iperboliche con termini nonlocali: teoria e modelli''.

{\small\bibliography{refe}
\bibliographystyle{abbrv}}

\end{document}